\def\TSG{{\mathrm{TSG}_+}}
\def\fix{{\mathrm{fix}}}
\def\Aut{{\mathrm{Aut}}}
\newtheorem*{fact}{Fact}
\newtheorem{cor}{Corollary}
\newtheorem{lemma}{Lemma}
\newtheorem*{edge}{Edge Embedding Lemma}
\newtheorem*{subgroup}{Subgroup Lemma}
\newtheorem*{subgroupcor}{Subgroup Corollary}
\newtheorem*{fixed vertex}{Fixed Vertex Property}
\newtheorem*{smith}{Smith Theory}
\def\Z{{\mathbb Z}}
\def\R{{\mathbb R}}
\def\a{{\alpha}}
\def\b{{\beta}}
\def\f{{\phi}}
\def\ep{{\varepsilon}}
\def\vf{{\varphi}}
\def\TSG{{\mathrm{TSG_+}}}
\def\Aut{{\mathrm{Aut}}}
\def\Diff{{\mathrm{Diff_+}}}
\def\fix{{\mathrm{fix}}}
\def\so{{\mathrm{SO}}}
\def\lcm{{\mathrm{lcm}}}
\newcommand{\x}{\times}
\newtheorem*{iso}{\textbf{Isometry Theorem}}
\newtheorem*{aut}{\textbf{Automorphism Theorem}}
\newtheorem*{orbit}{\textbf{Orbits Lemma}}
\newtheorem*{disjoint}{\textbf{Disjoint Fixed Points Lemma}}
\newtheorem*{them1}{\textbf{Theorem 1}}
\newtheorem*{them2}{\textbf{Theorem 2}}
\newtheorem{theorem}{Theorem}
\theoremstyle{definition}
\theoremstyle{remark}
\newtheorem*{remark}{Remark}
\begin{document}
\title{Topological symmetry groups of complete bipartite graphs}
\author[Hake]{Kathleen Hake}
\author[Mellor]{Blake Mellor}
\author[Pittluck]{Matt Pittluck}

\subjclass{57M25, 05C10}

\keywords{topological symmetry groups, spatial graphs}

\address{Department of Mathematics, UC Santa Barbara}
\email{khake@math.ucsb.edu}
\address{Department of Mathematics, Loyola Marymount University, Los Angeles, CA 90045, USA}
\email{blake.mellor@lmu.edu}

\date \today

\thanks{This research was supported in part by NSF Grant DMS-0905687.}

\begin{abstract}

The symmetries of complex molecular structures can be modeled by the {\em topological symmetry group} of the underlying embedded graph.  It is therefore important to understand which topological symmetry groups can be realized by particular abstract graphs.  This question has been answered for complete graphs \cite{fmn3}; it is natural next to consider complete bipartite graphs.  In previous work we classified the complete bipartite graphs that can realize topological symmetry groups isomorphic to $A_4$, $S_4$ or $A_5$ \cite{me}; in this paper we determine which complete bipartite graphs have an embedding in $S^3$ whose topological symmetry group is isomorphic to $\Z_m$, $D_m$, $\Z_r \x \Z_s$ or $(\Z_r \x \Z_s) \ltimes \Z_2$. 

\end{abstract}

\maketitle

\section{Introduction}

Chemists have long used the symmetries of a molecule to predict some of its chemical properties. For small molecules, it is enough to consider the rigid symmetries, such as rotations and reflections.  Increasingly, however, chemists are dealing with long, flexible molecules (such as DNA), for which the group of rigid symmetries is no longer sufficient. To help understand the symmetries of these more complex molecules, Jon Simon introduced the {\em topological symmetry group} \cite{si}. Molecules are often modeled as graphs, where vertices represent atoms and edges represent bonds. Although the motivation for studying topological symmetry groups arose from looking at symmetries of molecules, we can consider the topological symmetry group of any embedded graph. 

We consider an abstract graph $\gamma$ with automorphism group $\Aut(\gamma)$, and let $\Gamma$ be an embedding of $\gamma$ in $S^3$.  The {\em topological symmetry group} of $\Gamma$, denoted $\mathrm{TSG}(\Gamma)$, is the subgroup of $\Aut(\gamma)$ induced by diffeomorphisms of the pair $(S^3, \Gamma)$.  The {\em orientation preserving topological symmetry group} of $\Gamma$, denoted $\TSG(\Gamma)$, is the subgroup of $\Aut(\gamma)$ induced by orientation preserving diffeomorphisms of the pair $(S^3, \Gamma)$.  In this paper we are only concerned with $\TSG(\Gamma)$, so we will refer to it as simply the {\em topological symmetry group}.

It has long been known that every finite group can be realized as $\Aut(\gamma)$ for some graph $\gamma$ \cite{fr}.  However, this is {\em not} true for topological symmetry groups.  Results of Flapan, Naimi, Pommersheim and Tamvakis \cite{fnpt}, in combination with the Geometrization Conjecture \cite{mf}, show that any topological symmetry group of an embedding of a 3-connected graph is isomorphic to a finite subgroup of $\so(4)$.  However, their results do not give any information as to which graphs can be used to realize any particular group.  The first results along these lines have been for the family of complete graphs $K_n$.  Flapan, Naimi and Tamvakis \cite{fnt} classified the groups which could be realized as the topological symmetry group for an embedding of a complete graph; subsequently, Flapan, Naimi, Yoshizawa and the second author determined exactly which complete graphs had embeddings that realized each group \cite{fmn2, fmn3}.

In this paper we turn to another well-known family of graphs, the {\it complete bipartite graphs} $K_{n,n}$. Unlike the complete graphs, where only some of the subgroups of $\so(4)$ are realizable as topological symmetry groups, {\em any} finite subgroup of $\so(4)$ can be realized as the topological symmetry group of an embedding of some $K_{n,n}$ \cite{fnpt}.  So the complete bipartite graphs are a natural family of graphs to investigate in order to better understand the full range of possible topological symmetry groups.  The finite subgroups of $\so(4)$ have been classified and they can all be described as quotients of products of cyclic groups $\Z_m$, dihedral groups $D_m$, and the symmetry groups of the regular polyhedra ($A_4$, $S_4$ and $A_5$) \cite{du}.  Previously, the second author determined which complete bipartite graphs have embeddings whose topological symmetry groups are isomorphic to $A_4$, $S_4$ or $A_5$ \cite{me}.  In this paper we consider the groups $\Z_m$, $D_m$, $\Z_r \times \Z_s$ and $(\Z_r \times \Z_s) \ltimes \Z_2$.  The results are summarized in the following theorems:

\begin{theorem}\label{T:cyclic}
Let $n>2$. There exists an embedding, $\Gamma$, of $K_{n,n}$ in $S^{3}$ such that $\TSG(\Gamma)=H$ for $H=\mathbb{Z}_{m}$ or  $D_{m}$ if and only if one of the following conditions hold:
	\begin{enumerate}
	\item $n\equiv 0,1,2 \pmod{m}$,
	\item $n\equiv 0 \pmod{\frac{m}{2}}$ when $m$ is even,
	\item $n\equiv 2 \pmod{\frac{m}{2}}$ when $m$ is even and $4|m$.
\end{enumerate}
\end{theorem}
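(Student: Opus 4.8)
The plan is to prove both directions by reducing to isometric actions and analyzing vertex orbits. By the Isometry Theorem we may assume $\Gamma$ is embedded so that $H = \TSG(\Gamma)$ is induced by a finite group of orientation-preserving isometries of $S^3$. Since both $\Z_m$ and $D_m$ contain a cyclic subgroup of order $m$, in either case there is an isometry $\phi$ of order $m$ generating this subgroup; up to conjugacy $\phi$ is the linear map $(z_1,z_2)\mapsto (e^{2\pi i a/m}z_1, e^{2\pi i b/m}z_2)$ on $S^3\subset\C^2$. By Smith Theory the fixed-point set of any nontrivial power $\phi^k$ is either empty or an unknotted circle, and the only points with nontrivial stabilizer lie on the two coordinate circles $C_1=\{z_2=0\}$ and $C_2=\{z_1=0\}$, where every point of $C_1$ (resp.\ $C_2$) has orbit size $m/\gcd(a,m)$ (resp.\ $m/\gcd(b,m)$). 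This confinement of exceptional orbits to two circles is the structural backbone of the argument.

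For necessity, I would split into the cases where $\phi$ fixes each part of $K_{n,n}$ setwise and where $\phi$ interchanges the two parts (the latter forces $m$ even, explaining why conditions (2) and (3) require this). The Orbits Lemma records the possible orbit sizes, and the key constraint comes from the Edge Embedding Lemma: if a nontrivial power $\phi^k$ fixes a circle $C$ pointwise and both parts have vertices on $C$, then every edge between such vertices must lie on $C$, so a complete bipartite graph embeds in a circle and each part has at most two vertices on $C$. A second constraint is that in the part-swapping case no orbit can have size $2$, since two orbit-mates lie in opposite parts and are therefore joined by an edge that $\phi$ would have to reverse, forcing a fixed point of a fixed-point-free map. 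Combining these with the requirement that $\phi$ induce an automorphism of order exactly $m$ is what pins down $n$: because exceptional orbits are available only on $C_1$ and $C_2$, forcing one part to use a small orbit on a circle typically forces the other part onto the same circle, where the at-most-two bound then caps the residue. The fixing case yields $n\equiv 0,1,2\pmod m$ (up to two fixed vertices per part on $\fix(\phi)$ together with free orbits of size $m$), giving (1); the swapping case yields $n\equiv 0\pmod{m/2}$ from free orbits alone and $n\equiv 2\pmod{m/2}$ when a single exceptional orbit of size $4$ contributes two vertices to each part, giving (2) and (3). The divisibility $4\mid m$ in (3) appears exactly because an orbit of size $4$ on a circle requires $\phi$ to restrict to an order-$4$ rotation there, i.e.\ $\gcd(a,m)=m/4$.

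For sufficiency, I would reverse this analysis and construct an explicit embedding for each residue class. Fixing $\phi$ (and, for $D_m$, an orientation-preserving involution conjugating $\phi$ to $\phi^{-1}$) as an isometry group, I would place the $2n$ vertices in the orbits prescribed above---at most two fixed vertices per part on $\fix(\phi)$ plus free orbits for (1); free orbits interchanging the parts for (2); and one size-$4$ orbit on $C_1$ together with free orbits for (3)---checking realizability with the Vertex Embedding Lemma and the Disjoint Fixed Points Lemma. The Edge Embedding Lemma then guarantees the edges can be added $H$-equivariantly; here I would use the Knot Addition Lemma to tie knots into representative edges (or otherwise control their knotting and linking) so that no diffeomorphism outside $H$ preserves $\Gamma$, ensuring $\TSG(\Gamma)$ equals $H$ rather than a larger group. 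The $D_m$ constructions reuse the $\Z_m$ vertex placements made symmetric with respect to the involution, which is why the two groups obey identical conditions.

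The main obstacle is the necessity direction, and specifically the interlocking bookkeeping: showing that the confinement of exceptional orbits to $C_1$ and $C_2$, the at-most-two-per-circle bound, and the order-$m$ condition together force $n$ into precisely the listed residues and admit no others (for instance, ruling out orbits of size $m/3$ by showing they would require both parts to crowd onto a single fixed circle). A secondary difficulty, on the constructive side, is verifying that the added edges introduce no unintended symmetry, so that the realized group is exactly $H$.
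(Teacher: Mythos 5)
Your overall architecture matches the paper's for the sufficiency direction: explicit isometry groups containing a $D_m$, vertices placed in free orbits plus a few exceptional orbits on the invariant circles, the Edge Embedding Lemma to add edges equivariantly, and then a knotting device to cut the symmetry group down to exactly $H$. (The paper uses the Subgroup Lemma and Subgroup Corollary rather than a ``Knot Addition Lemma,'' and it builds the $D_m$ embedding first and derives the $\Z_m$ one from it, but these are the same idea; the real work the paper does here, which your sketch defers, is verifying the five hypotheses of the Edge Embedding Lemma and exhibiting a pointwise-fixed subgraph such as $K_{3,1}$ or $K_{3,2}$ that cannot embed in $S^1$, so that the Subgroup Lemma applies.)

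The genuine gap is in your necessity direction. You propose to re-derive from scratch, via Smith theory and the geometry of isometric $\Z_m$-actions on $S^3$, exactly which orbit structures an order-$m$ automorphism of an embedded $K_{n,n}$ can have. That is precisely the content of the Automorphism Theorem, which is stated in the paper's background section and which the paper's Lemma 1 simply cites, reducing necessity to a short case-by-case congruence computation over the theorem's nine listed cycle structures. Your sketch does not carry out the corresponding bookkeeping and, more importantly, your summary of the possible outcomes is incomplete: in the part-fixing case you account only for ``at most two fixed vertices per part plus free $m$-orbits,'' but the Automorphism Theorem also permits $j$-cycles with $j\mid m$ in one part, a mix of $j$- and $k$-cycles with $\lcm(j,k)=m$ (split within one part or across the two parts), and a $2$-cycle in each part. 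These are cases (4)--(8) of that theorem, and case (6) in particular requires a genuine (if elementary) number-theoretic argument to show it still forces $n\equiv 0\pmod m$; nothing in your sketch addresses why such configurations cannot produce a residue outside $\{0,1,2\}$. You correctly identify this ``interlocking bookkeeping'' as the main obstacle, but you leave it unresolved, so as written the only-if direction is not established. The fix is simply to invoke the Automorphism Theorem and enumerate its cases, as the paper does.
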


\begin{theorem}\label{T:product}
Let $n>2$. There exists an embedding, $\Gamma$, of $K_{n,n}$ in $S^{3}$ such that $H \subseteq\TSG(\Gamma)$ for $H = \mathbb{Z}_{r}\times\mathbb{Z}_{s}$ or $(\mathbb{Z}_{r}\times\mathbb{Z}_{s})\ltimes\mathbb{Z}_{2}$, where $r \vert s$, if and only if one of the following conditions hold:
	\begin{enumerate}	
	\item $n\equiv 0 \pmod {s}$,
	\item $n\equiv 2 \pmod {2s}$ when $r=2$,
	\item $n\equiv s+2 \pmod {2s}$ when $4\vert s$, and $r=2$,
	\item $n\equiv 2 \pmod {2s}$ when $r=4$.
	\end{enumerate}
Moreover, in each of the above cases, we can construct embeddings $\Gamma$ where $\TSG(\Gamma) = H$ except in the following cases, which are still open: \begin{itemize}
	\item $K_{ls, ls}$, when $1 \leq l < 2r$, $H = \mathbb{Z}_{r}\times\mathbb{Z}_{s}$ or $(\mathbb{Z}_{r}\times\mathbb{Z}_{s})\ltimes\mathbb{Z}_{2}$
	\item $K_{6,6}$, when $H = (\Z_2 \x \Z_4) \ltimes \Z_2$
	\item $K_{10,10}$, when $H = (\Z_4 \x \Z_4) \ltimes \Z_2$
\end{itemize}
\end{theorem}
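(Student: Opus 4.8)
The plan is to parallel the proof of Theorem~\ref{T:cyclic}, extending the techniques used for cyclic and dihedral groups to the product groups $\Z_r \x \Z_s$ and $(\Z_r \x \Z_s)\ltimes \Z_2$. Since every element of $H$ we must realize is orientation preserving and each of these finite groups embeds in $\so(4)$, I would first invoke the classification of finite subgroups of $\so(4)$ together with the Isometry Theorem to assume that $H$ acts on $S^3$ by isometries. In suitable coordinates on $S^3 \subseteq \C^2$, the subgroup $\Z_r \x \Z_s$ sits inside a maximal torus, with the $\Z_r$ factor generated by $(z_1,z_2)\mapsto(e^{2\pi i/r}z_1,z_2)$ and the $\Z_s$ factor by $(z_1,z_2)\mapsto(z_1,e^{2\pi i/s}z_2)$; these fix pointwise the two Hopf-linked circles $C_1=\{z_1=0\}$ and $C_2=\{z_2=0\}$. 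When the extra $\ltimes\Z_2$ is present, its generator is the orientation-preserving involution interchanging the two partite sets. The theorem then splits into three tasks: showing the four congruences are forced (necessity), building embeddings with $H \subseteq \TSG(\Gamma)$ (sufficiency), and refining these to obtain $\TSG(\Gamma)=H$ (exactness).

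For necessity, I would analyze how $H$ permutes the $2n$ vertices. The subgroup $\Z_r \x \Z_s$ preserves each partite set, so I focus on one set of $n$ vertices. Each such vertex lies either off both fixed circles---giving a free orbit of size $rs$---or on $C_1$ or $C_2$, where one factor is its stabilizer and the complementary factor rotates, producing orbits of size $s$ or $r$ respectively. Applying the Orbits Lemma to tally these contributions to $n$, the base modulus is set by the free-orbit size $rs$, while on-circle vertices supply the smaller residues; in the generic situation this forces $n\equiv 0 \pmod s$, which is condition~(1). The refined modulus $2s$ and the residues $2$ and $s+2$ arise precisely when $r$ is even, so that the order-two element of $\Z_r$ creates size-two orbits on $C_2$, and $r=2$ or $r=4$ turn out to be the only values for which these special configurations are compatible with an orthogonal action realizing all of $H$. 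The Disjoint Fixed Points Lemma together with the Vertex and Edge Embedding Lemmas bound how many vertices can lie on a single fixed circle, which is what prevents any further residues from appearing.

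For sufficiency I would reverse this analysis: given one of the four congruences, decompose $n$ into the corresponding sum of free orbits of size $rs$ plus the requisite on-circle orbits, and use this decomposition to define an explicit faithful action of $H$ on the abstract graph $K_{n,n}$. The Edge Embedding Lemma---after checking that each edge orbit can be spanned consistently and that the fixed-point and valence hypotheses hold---then promotes this combinatorial action to an actual embedding $\Gamma \subseteq S^3$ on which $H$ acts by isometries, giving $H \subseteq \TSG(\Gamma)$. The semidirect cases are handled by additionally realizing the swap involution and verifying its compatibility with the chosen edge-spanning.

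The main obstacle is the final exactness claim, $\TSG(\Gamma)=H$, since I must rule out \emph{every} symmetry of $\Gamma$ outside $H$. The standard device is the Knot Addition Lemma: tie a fixed nontrivial knot into one representative of a carefully chosen edge orbit and spread it $H$-equivariantly, so that any extra automorphism would be forced to preserve distinguished knotting data it cannot. One then shows $\TSG(\Gamma)$ is a realizable overgroup of $H$ preserving that data, and invokes the $\so(4)$ classification (via the Automorphism Theorem and Subgroup Lemma) to conclude the overgroup must equal $H$. This succeeds whenever the graph is large enough to contain an edge orbit that can absorb the knotting without forcing new coincidences; the difficulty---and the source of the listed open cases $K_{ls,ls}$ with $l<2r$, together with $K_{6,6}$ and $K_{10,10}$---is precisely that for small $n$ relative to $|H|$ there is not enough room to break the residual symmetry without simultaneously destroying one of the symmetries in $H$, so those cases must either be examined individually or left open.
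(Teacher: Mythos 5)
Your outline captures the right architecture --- isometric actions via the Isometry Theorem, orbit counting for necessity, explicit vertex placements plus the Edge Embedding Lemma for sufficiency, and symmetry-breaking for exactness --- but the hard content of the theorem is missing in two places. First, the necessity direction is where most of the work lives, and ``tallying orbit contributions'' only recovers the coarse constraints already implied by the cyclic case ($n \equiv 0,1,2 \pmod{s}$ and the $\frac{s}{2}$ variants). It does not explain why $n \equiv 1 \pmod{s}$ is impossible, why $n \equiv 2 \pmod{s}$ forces $r \in \{2,4\}$, why the residue $s+2 \pmod{2s}$ requires $4 \mid s$ and excludes $r=4$, or why the residues $\frac{ls}{2}$ and $\frac{s}{2}+2$ die. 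Each of these requires a separate case analysis on whether the generators preserve or interchange $V$ and $W$, combining the Automorphism Theorem's list of exceptional cycles with the Orbits Lemma, the Disjoint Fixed Points Lemma, Smith Theory, and a geometric fact (Corollary \ref{C:commute}) that commuting motions of $\so(4)$ share a pair of invariant orthogonal planes. For instance, the exclusion of $n \equiv 1 \pmod{s}$ comes from showing that the unique fixed vertex of one generator must also be fixed (or swapped through a fixed edge) by the other, contradicting disjointness of their fixed-point sets; nothing in your sketch produces this. Also, the ``Vertex Embedding Lemma'' you cite is not a result available in this paper.

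Second, your model of the action --- $\Z_r$ and $\Z_s$ acting as pure rotations about the two Hopf-linked circles --- cannot realize condition (3). For $n\equiv s+2 \pmod{2s}$ the paper must take the generator of $\Z_s$ to be a \emph{glide rotation} (an order-$4$ rotation about one plane composed with an order-$s$ rotation about the other); with pure rotations the leftover $2s+4$ vertices would have to be split with all of $V$ on one circle and all of $W$ on the other, forcing some nontrivial element to fix five or more vertices from both parts, which Smith Theory forbids. Finally, for exactness the paper does not invoke a Knot Addition Lemma (none is stated here); it verifies the combinatorial hypotheses of the Subgroup Lemma --- distinct edge orbits such that any automorphism fixing one edge pointwise and each orbit setwise fixes a subgraph not embeddable in $S^1$ --- and then applies the Subgroup Corollary to pass to $\Z_r\x\Z_s$. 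Your intuition about the open cases $K_{ls,ls}$ with $l<2r$ is right (there is no free orbit to anchor that argument), but the $K_{6,6}$ and $K_{10,10}$ exceptions arise from a different failure you do not identify: under the full group $(\Z_r\x\Z_s)\ltimes\Z_2$ the relevant edge orbit contains two edges adjacent to the base vertex, so fixing the orbit setwise no longer pins down enough vertices, whereas it does for the index-two subgroup.
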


\begin{remark}
Since, for any $r$ and $s$, $\mathbb{Z}_{r}\times\mathbb{Z}_{s}\cong\mathbb{Z}_{\gcd(r,s)}\times\mathbb{Z}_{\lcm(r,s)}$, it is easiest to assume that $r \vert s$, so that $\gcd(r,s) = r$ and $\lcm(r,s) = s$.  In general, Theorem \ref{T:product} could be written with $r$ replaced by $\gcd(r,s)$ and $s$ replaced by $\lcm(r,s)$ in each of the conditions.
\end{remark}

\section{Background}

\subsection{Prior results} 
In this section we gather together prior results that we will refer to throughout this paper.  We first consider results that allow us to prove that certain groups {\it cannot} be realized as a topological symmetry group for a particular graph.  The following well-known fact for complete bipartite graphs restricts how automorphisms of the graph can act on the vertices.

\begin{fact}
\label{auts}
Let $\f $ be a permutation of the vertices of $K_{n,n}$.  Let $V$ and $W$ denote the two sets of $n$ independent vertices.  Then $\f$ is an automorphism of $K_{n,n}$ if and only if $\f$ either interchanges $V$ and $W$ or setwise fixes each of $V$ and $W$.
\end{fact}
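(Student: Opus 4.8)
The plan is to prove both implications, with the crucial observation being that the bipartition $\{V, W\}$ is determined intrinsically by the adjacency structure of $K_{n,n}$, so that any automorphism is forced to respect it. The key characterization is this: in $K_{n,n}$, two distinct vertices are non-adjacent if and only if they lie in the same part. This holds because there are no edges within $V$ or within $W$, while every vertex of $V$ is joined to every vertex of $W$. Consequently, the relation ``equal or non-adjacent'' is an equivalence relation on the vertex set whose two classes are exactly $V$ and $W$. Once this is in hand, both directions follow quickly.

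For the ``if'' direction I would simply verify that a permutation $\f$ that either setwise fixes each of $V$ and $W$, or interchanges them, preserves adjacency. If $\f$ setwise fixes each part, then for $v \in V$ and $w \in W$ the images $\f(v)$ and $\f(w)$ lie in $V$ and $W$ respectively, hence are adjacent, matching the adjacency of $v$ and $w$; and two vertices lying in a common part map into a common part, so non-adjacency is preserved as well. The case where $\f$ interchanges $V$ and $W$ is identical with the roles of the two parts reversed. Since $\f$ is a bijection preserving adjacency, it is an automorphism of $K_{n,n}$.

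For the substantive ``only if'' direction, suppose $\f$ is an automorphism. Because $\f$ preserves adjacency, it preserves non-adjacency, and hence preserves the equivalence relation described above; therefore it must permute its two classes. Concretely, given distinct $u, u' \in V$, their images $\f(u)$ and $\f(u')$ are distinct and non-adjacent, so by the characterization they lie in a common part. As this holds for every such pair, $\f(V)$ is contained in a single part, and since $|\f(V)| = n$ equals the size of each part, $\f(V)$ is precisely that part. Thus $\f(V)$ is either $V$ or $W$, and $\f(W)$ is then forced to be the complementary part, giving exactly the two stated alternatives.

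The argument is elementary, and the only point requiring genuine care is establishing the intrinsic characterization of the bipartition at the outset, namely that the partition into $V$ and $W$ can be recovered from the graph alone via the non-adjacency relation. This is the step on which everything turns, since it is what rules out an automorphism ``mixing'' the two parts; but once the characterization is stated, no real obstacle remains, and the remainder is routine verification.
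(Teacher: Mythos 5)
Your proof is correct and complete; the key point, that the bipartition of $K_{n,n}$ is recovered intrinsically as the classes of the ``equal or non-adjacent'' relation, is exactly what makes the ``only if'' direction work. The paper itself states this as a well-known Fact and offers no proof, so there is nothing to compare against; your argument is the standard one and fills that gap correctly.
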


The following result about finite order homeomorphisms of $S^3$ is a special case of a well-known result of P. A. Smith.

\begin{smith} \cite{sm}
Let $h$ be a non-trivial finite order homeomorphism of $S^3$.  If $h$ is orientation preserving, then $\fix(h)$ is either the empty set or is homeomorphic to $S^1$.  If $h$ is orientation reversing, then $\fix(h)$ is homeomorphic to either $S^0$ or $S^2$.
\end{smith}

The Isometry Theorem allows us to assume that the elements of $\TSG(\Gamma)$ are orientation-preserving isometries -- i.e. either rotations (whose fixed point sets are geodesic circles) or glide rotations (with no fixed points).

\begin{iso}{\cite{fnpt}}
Let $\Gamma$ be an embedded 3-connected graph, and let $H=\TSG(\Gamma)$. Then $\Gamma$ can be re-embedded as $\Gamma'$ such that $H \subseteq \TSG(\Gamma')$ and $\TSG(\Gamma')$ is induced by an isomorphic subgroup of $\so(4)$. \end{iso}

The Automorphism Theorem \cite{flmpv} tells us which automorphisms of $K_{n,n}$ can be realized as an orientation-preserving diffeomorphism of $(S^{3}, \Gamma)$, for some embedding $\Gamma$ of $K_{n,n}$.

\begin{aut}{\cite{flmpv}}
Let $n>2$ and let $\varphi$ be an order $r$ automorphism of a complete bipartite graph $K_{n,n}$ with vertex sets $V$ and $W$.  There is an embedding $\Gamma$ of $K_{n,n}$ in $S^3$ with an orientation preserving diffeomorphism $h$ of $(S^3,\Gamma)$ inducing $\varphi$ if and only if all vertices are in $r$-cycles except for the fixed vertices and exceptional cycles explicitly mentioned below (up to interchanging $V$ and $W$):

\begin{enumerate} 
\item There are no fixed vertices or exceptional cycles.

\item $V$ contains one or more fixed vertices.

\item $V$ and $W$ each contain at most 2 fixed vertices.

\item $j|r$ and $V$ contains some $j$-cycles.

\item $r=\mathrm{lcm}(j,k)$, and $V$ contains some $j$-cycles and $k$-cycles.

\item $r=\mathrm{lcm}(j,k)$, and $V$ contains some $j$-cycles and $W$ contains some $k$-cycles.

\item $V$ and $W$ each contain one 2-cycle.

\item $\frac{r}{2}$ is odd, $V$ and $W$ each contain one 2-cycle, and $V$ contains some $\frac{r}{2}$-cycles.

\item $\varphi(V)=W$ and $V\cup W$ contains one 4-cycle.

\end{enumerate}
 
 \end{aut}
 
 \begin{orbit}{\cite{cfo}}
Suppose $\alpha$ and $\beta$ are commuting automorphisms of a finite set $V$. Then $\beta$ takes $\alpha$-orbits to $\alpha$-orbits of the same length.
\end{orbit} 

\begin{disjoint}{\cite{cfo}}
Suppose g, h $\in \Diff(S^{3})$ such that $\langle g,h \rangle = \mathbb{Z}_{r}\times\mathbb{Z}_{s}$ is not cyclic or equal to $D_{2}$. Then $\fix(g)$ and $\fix(h)$ are disjoint.
\end{disjoint}

The following lemmas will be useful when we construct an embedding of $K_{n,n}$ in $S^{3}$ that realizes a particular automorphism $\varphi$. The Edge Embedding Lemma will help us extend an embedding of the vertices of $K_{n,n}$ to an embedding of the edges with the same symmetries.  The Subgroup Lemma and Subgroup Corollary allow us to re-embed the graph to realize a smaller group of symmetries.

\begin{edge}{\cite{fmn2}}
Let G be a finite subgroup of $\Diff(S^{3})$, and let $\gamma$ be a graph whose vertices are embedded in $S^{3}$ as a set V which is invariant under G such that G induces a faithful action on $\gamma$. Suppose that adjacent pairs of vertices in V satisfy the following hypotheses:

\begin{enumerate}
\item If a pair is pointwise fixed by non-trivial elements h,g $\in$ G, then $\fix(h) = \fix(g)$.
\item For each pair $\{v,w\}$ in the fixed point set C of some non-trivial element of G, there is an arc $A_{vw} \subseteq$ C bounded by {v,w} whose interior is disjoint from V and from any other such arc $A_{v'w'}$.
\item If a point in the interior of some $A_{vw}$ or a pair $\{v,w\}$ bounding some $A_{vw}$ is setwise invariant under some f $\in$ G, then $f(A_{vw})=A_{vw}$.
\item If a pair is interchanged by some g $\in$ G, then the subgraph of $\gamma$ whose vertices are pointwise fixed by g can be embedded in a proper subset of a circle.
\item If a pair is interchanged by some g $\in$ G, then $\fix(g)$ is non-empty, and $\fix(h)\ne \fix(g)$ if $h\ne g$.
\end{enumerate}

\noindent Then the embedding of the vertices of $\gamma$ can be extended to the edges of $\gamma$ in $S^{3}$ such that the resulting embedding of $\gamma$ is setwise invariant under G.
\end{edge}

\begin{subgroup}\cite{fmn1}
Let $\Gamma$ be an embedding of a 3-connected graph $\gamma$ in $S^{3}$, and let $H \subseteq \TSG(\Gamma)$. Let $\varepsilon_{1}$,...,$\varepsilon_{n}$ be edges of $\gamma$ embedded in $\Gamma$ as $e_{1}$,...,$e_{n}$.  Let $\langle e_i \rangle_H$ denote the orbit of edge $e_i$ under the action of $H$, and let $\langle \ep_i \rangle_H$ denote the orbit of $\ep_i$ under the action of the subgroup of automorphisms of $\gamma$ induced by $H$.

Now suppose that $\langle e_{1} \rangle _{H}$,...,$\langle e_{n} \rangle _{H}$ are distinct and that any automorphism of $\gamma$ which fixes $\varepsilon_{1}$ pointwise, and fixes each $\langle \varepsilon_{i} \rangle_{H}$ setwise, also pointwise fixes a subgraph of $\gamma$ which cannot be embedded in $S^{1}$. Then there is an embedding $\Gamma'$ of $\gamma$ such that $\TSG(\Gamma') = H$.
\end{subgroup}

\begin{subgroupcor}\cite{fmn1} 
Let $\Gamma$ be an embedding of a 3-connected graph in $S^3$.  Suppose that $\Gamma$ contains an edge $e$ which is not pointwise fixed by any non-trivial element of $\TSG(\Gamma)$.  Then for every $H \subseteq \TSG(\Gamma)$, there is an embedding $\Gamma'$ of $\Gamma$ with $H = \TSG(\Gamma')$.
\end{subgroupcor}

\subsection{Motions in $\so(4)$}
In this section we will describe the structure of $\so(4)$ and lay out some facts we will need later in the paper.  For more details, see Du Val \cite{du} and Conway and Smith \cite{cs}.  We will also describe some particular subgroups of $\so(4)$ which we will use to realize topological symmetry groups.

Algebraically, $\so(4)$ is the group of $4 \x 4$ real matrices with determinant 1.  Geometrically, an element of $\so(4)$ is an orientation-preserving rigid motion of $\R^4$ that fixes the origin; we are then interested in the induced motion on the unit sphere $S^3$.  There are two kinds of motions in $\so(4)$: \medskip

A {\em rotation} fixes a plane $A$ through the origin in $\R^4$, and rotates the orthogonal plane $B$ through the origin by some angle $\alpha$.  Depending on the context, the {\it axis} of the rotation denotes either the plane $A$ or the geodesic circle where $A$ intersects $S^3$; we say that we rotate by an angle $\alpha$ about this axis.  Note that, unless $\alpha = \pi$, $B$ and $A$ are the only invariant planes (i.e. the only planes mapped to themselves). \smallskip

A {\em glide rotation} only fixes the origin in $\R^4$ (and so has no fixed points in $S^3$).  Any glide rotation has a pair of mutually orthogonal planes $A$ and $B$ which are invariant, meaning that each plane is rotated onto itself.  In general, $A$ is rotated by an angle $\beta$ and $B$ is rotated by an angle $\alpha$.  The intersections of $A$ and $B$ with $S^3$ are a pair of linked geodesic circles.
The glide rotation can be viewed as the composition of two (commuting) rotations: one by an angle $\alpha$ about $A$, and the other by an angle $\beta$ about $B$.

If the angles $\alpha$ and $\beta$ are not equal in magnitude, the glide rotation $g$ has a {\it unique} pair of invariant planes.  However, if $\alpha = \pm \beta$, then we say the glide rotation $g$ is {\em isoclinic}, and there are infinitely many pairs of invariant planes. For any vector ${\bf v}$ in $\R^4$, the plane spanned by ${\bf v}$ and $g({\bf v})$ is an invariant plane. The isoclinic motions fall into two subgroups, the {\it left-isoclinic} motions (where $\alpha = \beta$) and {\it right-isoclinic} motions (where $\alpha = -\beta$); the intersection of these subgroups is just the identity and the central inversion (multiplication by $-1$).  Every left-isoclinic motion commutes with every right-isoclinic motion, and vice versa.  Every element of $\so(4)$ can be represented as a product of a left-isoclinic motion and a right-isoclinic motion \cite{cs, meb}.\medskip

We use these facts to prove some lemmas which will be useful in the proof of Theorem \ref{T:product}.

\begin{lemma}\label{L:commute}
If $g$ and $h$ are commuting motions in $\so(4)$ (so $gh = hg$), then there is a pair of orthogonal planes $A$ and $B$ which are invariant under {\it both} $g$ and $h$.
\end{lemma}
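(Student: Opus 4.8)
The plan is to exploit the dichotomy, implicit in the description of $\so(4)$ above, between motions that possess a \emph{unique} invariant pair of orthogonal planes and the degenerate motions — the isoclinic glide rotations, the rotations by $\pi$, the central inversion, and the identity — that possess infinitely many invariant planes. First I would record the following observation: if a motion $k \in \so(4)$ has a unique invariant pair $\{A,B\}$ and $k'$ commutes with $k$, then $k'$ fixes $A$ and $B$ individually. Indeed, $k'k(k')^{-1}=k$, so $k'$ permutes the invariant planes of $k$; uniqueness forces $k'$ either to fix both or to interchange them, but since in this non-degenerate case the two rotation angles differ in magnitude, interchanging $A$ and $B$ would conjugate $k$ to a genuinely different motion, a contradiction. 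Consequently, whenever at least one of $g,h$ has a unique invariant pair, taking $k$ to be that motion immediately yields a pair of orthogonal planes invariant under both $g$ and $h$, and we are done.

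It remains to treat the case in which \emph{both} $g$ and $h$ are degenerate. Here I would pass to the quaternionic model of $\so(4)$, identifying $\R^4$ with the quaternions and writing each motion as $\mathbf{x} \mapsto p\,\mathbf{x}\,q$ for unit quaternions $p,q$, so that the left- and right-isoclinic subgroups $L$ and $R$ are realized by $\mathbf{x}\mapsto p\mathbf{x}$ and $\mathbf{x}\mapsto \mathbf{x}q$. Decompose $g = g_L g_R$ and $h = h_L h_R$. Since every element of $L$ commutes with every element of $R$, expanding $gh$ and $hg$ and invoking the uniqueness (up to the central inversion) of the left–right decomposition of a single motion shows that the left factors commute and the right factors commute, each up to sign. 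Now two commuting unit quaternions share a common imaginary axis $\mathbf{u}$, i.e. both lie in $\mathrm{span}\{1,\mathbf{u}\}$, and the plane $\mathrm{span}\{1,\mathbf{u}\}$ together with its orthogonal complement is invariant under left (resp. right) multiplication by any quaternion on that axis. Reconciling the axis coming from the left factors with the one coming from the right factors then pins down a single orthogonal pair $\{A,B\}$ invariant under both $g$ and $h$.

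The main obstacle I anticipate is precisely this bookkeeping: the ``up to sign'' ambiguity allows the left (and right) factors to \emph{anticommute} rather than commute, and the left-axis and right-axis constraints need not a priori coincide. In the anticommuting subcase the relevant quaternions are forced to be purely imaginary, so $g$ and $h$ are order-$2$ rotations (or the central inversion) whose $\pm 1$-eigenspaces are two-dimensional and supply common invariant planes directly. Rather than grind through these subcases, I would likely finish with a uniform argument that sidesteps the sign issue entirely: $g$ and $h$ are commuting real orthogonal — hence normal — operators on $\R^4$, so they admit a simultaneous orthonormal eigenbasis of $\C^4$ closed under complex conjugation, and pairing each eigenvector with its conjugate decomposes $\R^4$ as an orthogonal sum $A \oplus B$ of two real $2$-planes, each invariant under both $g$ and $h$. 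I would use the unique-pair observation for geometric intuition and the simultaneous-diagonalization argument to make the proof clean and complete.
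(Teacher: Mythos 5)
Your proposal is correct, and its decisive ingredient is genuinely different from the paper's. The first half of your argument (if one of $g$, $h$ has a \emph{unique} invariant pair $\{A,B\}$, then the other must preserve it, since swapping $A$ and $B$ would conjugate a non-isoclinic motion to a different one) is essentially the paper's middle case, which the paper verifies by an explicit matrix computation rather than a conjugation argument. Where you diverge is in the degenerate cases: the paper disposes of order-$2$ rotations by a direct pointwise argument ($gh(a)=hg(a)$ forces $h$ to preserve the eigenspaces of $g$) and then handles the remaining case, where $g$, $h$ and $gh$ are all isoclinic, by writing a left-isoclinic motion as an explicit $4\times 4$ quaternion-type matrix and computing that commutativity forces $c=d=0$. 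Your quaternionic sketch is headed toward the same computation and, as you anticipate, gets tangled in the sign/anticommutation bookkeeping; but your closing argument --- commuting orthogonal operators on $\R^4$ are normal, hence simultaneously unitarily diagonalizable over $\C$ with an eigenbasis closed under conjugation, and the eigenvectors can be grouped into two orthogonal real $2$-planes invariant under both --- is a complete, uniform proof that makes the entire case analysis (including the non-degenerate case) unnecessary, and generalizes to any family of commuting elements of $\so(4)$. The only point to state carefully is the grouping step: when a simultaneous eigenvector has a non-real eigenvalue, it pairs with its conjugate to span a real invariant $2$-plane orthogonal to its complement; when all simultaneous eigenvalues are real (both motions are involutions or the identity), the eigenvectors are real and you must pair them up by hand, which is always possible since every subspace of a common eigenspace is invariant. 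What the paper's computation buys in exchange for its length is the explicit normal form of $g$ and $h$ (simultaneous block-diagonal rotation matrices), which Corollary 1 then reads off directly; with your argument you would obtain that normal form as an immediate consequence of the invariant decomposition $\R^4 = A \oplus B$ anyway.
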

\begin{proof}
Since every element of $\so(4)$ has at least one pair of invariant orthogonal planes, let $A$ and $B$ be a pair of orthogonal planes that are invariant under $g$.  Then $gh(A) = hg(A) = h(A)$, so $h(A)$ (and, similarly, $h(B)$) is also invariant under $g$. By a change of basis, we may assume that $A$ and $B$ are the $xy$-plane and $zw$-plane in $xyzw$-space, respectively, so $g$ is one of the following matrices:

$$g = \left[\begin{matrix} R_\alpha & 0\\ 0 & R_\beta \end{matrix} \right] {\rm \ or\ } \left[\begin{matrix} MR_\alpha & 0\\ 0 & MR_\beta \end{matrix} \right],$$ \medskip
$${\rm where\ } R_\alpha = \left[ \begin{matrix} \cos \alpha & -\sin\alpha  \\ \sin\alpha & \cos \alpha \end{matrix}\right] {\rm \ and\ } M = \left[ \begin{matrix} 0 & 1  \\ 1 & 0 \end{matrix}\right]$$ \medskip

When $g$ is the matrix on the right, then it is a rotation of order 2; so we first consider the special case when $g$ is a rotation of order 2 and redefine $A$ to be the axis of rotation.  Then, for any $a \in A$, $gh(a) = hg(a) = h(a)$, so $g$ fixes the plane $h(A)$ pointwise.  Hence $h(A) = A$.  Similarly, if $b \in B$, $gh(b) = hg(b) = h(-b) = -h(b)$, so $g$ rotates $h(B)$ by an angle $\pi$.  Hence $h(B) = B$.  So the planes $A$ and $B$ are invariant under both $g$ and $h$. Similarly, if $h$ is a rotation of order 2, we are done.  So from now on, we assume $g$ and $h$ are {\it not} rotations of order 2.

We now consider the case when $g$ is {\it not} an isoclinic glide rotation, so $\alpha \neq \pm \beta$.  Then $A$ and $B$ are the only invariant pair of planes for $g$, so $h$ must either map each plane to itself, or interchange them. If $h(A) = A$ and $h(B) = B$, then we're done; so suppose that $h(A) = B$ and $h(B) = A$. Then $h$ is represented by one of the $4 \times 4$ matrices below:

$$h = \left[\begin{matrix} 0 & R_\gamma \\ R_\delta & 0 \end{matrix} \right] {\rm \ or\ } \left[\begin{matrix} 0 & MR_\gamma \\ MR_\delta & 0 \end{matrix} \right]$$ \medskip

But now, an easy computation shows that $gh = hg$ only when $\alpha = \beta$ (if $h$ is the matrix on the left) or $\alpha = -\beta$ (if $h$ is the matrix on the right).  Since $g$ is not isoclinic, this is a contradiction, so $A$ and $B$ must also be invariant planes for $h$.

Similarly, if $h$ is not isoclinic, we are done.  Moreover, since $g$ and $h$ both commute with $gh$, we are done if $gh$ is not isoclinic.  So now suppose that $g$, $h$ and $gh$ are all isoclinic. Then they must all be left (or all right) isoclinic. Without loss of generality, we suppose they are all left isoclinic. Then $\alpha = \beta$ in the matrix for $g$, and $h$ has the form \cite{meb}:
$$h = \left[\begin{matrix} a & -b & -c & -d \\ b & a & -d & c\\ c & d & a & -b\\ d & -c & b & a \end{matrix} \right], \quad a^2 + b^2 + c^2 + d^2 = 1$$
But then a direct computation shows that $gh = hg$ exactly when $c = d = 0$, so $h$ is also a glide rotation about planes $A$ and $B$.
\end{proof} \medskip

\begin{cor}\label{C:commute}
Suppose $H$ is a subgroup of $\so(4)$ which is isomorphic to $\Z_r \x \Z_s$, where $\lcm(r,s) > 2$.  So $H = \langle g, h | g^r = h^s = 1, gh = hg \rangle$. Then there are two completely orthogonal planes $A$ and $B$ such that $g$ is a combination of a rotation by $\frac{2\pi}{a}$ around $A$ and a rotation by $\frac{2\pi}{b}$ around $B$, with $\lcm(a,b) = r$, and $h$ is a combination of a rotation by $\frac{2\pi}{c}$ around $A$ and a rotation by $\frac{2\pi}{d}$ around $B$, with $\lcm(c,d) = s$.
\end{cor}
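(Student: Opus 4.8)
The plan is to invoke Lemma~\ref{L:commute} for the structural work and then simply pin down how the two generators act on the resulting common planes. First I would apply Lemma~\ref{L:commute} to the commuting pair $g,h$, obtaining a pair of orthogonal planes $A$ and $B$ invariant under both. Since each of $A$ and $B$ is invariant, an element of $\so(4)$ preserving them restricts to an element of $O(2)$ on each plane, and because its determinant on $\R^4$ is $+1$, the two restrictions have the same determinant. Hence each of $g$ and $h$ acts on the pair $(A,B)$ \emph{either} as a rotation on both planes \emph{or} as a reflection on both planes. The goal is to arrange that $g$ and $h$ \emph{both} act as rotations on a single common pair of planes, since then reading off the rotation orders is immediate.

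The key observation is that a motion acting as a reflection on each of $A$ and $B$ has $+1$-eigenspace a plane it fixes pointwise and $-1$-eigenspace a plane it rotates by $\pi$, so such a motion is a rotation of order $2$. Thus if $g$ acts by reflections on $(A,B)$ then $g$ has order $2$, i.e.\ $r=2$, and likewise for $h$. If \emph{both} $g$ and $h$ acted by reflections we would get $r=s=2$ and $\lcm(r,s)=2$, contradicting the hypothesis $\lcm(r,s)>2$; so at most one generator is of this reflection type. If neither is, then $A$ and $B$ already work.

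It remains to treat the mixed case, say $g$ acts by reflections (so $g$ is an order-$2$ rotation with $r=2$) while $h$ acts by rotations. Here I would discard $A,B$ and instead take $A'$ and $B'$ to be the $+1$- and $-1$-eigenplanes of $g$, which are orthogonal since $g$ is an orthogonal involution; on these $g$ is a genuine rotation (trivial on $A'$, a rotation by $\pi$ on $B'$). Because $h$ commutes with $g$, it preserves each eigenspace of $g$, so $A'$ and $B'$ are invariant under $h$ as well. On $(A',B')$ the motion $h$ again acts by rotations or by reflections; the latter would force $s=2$ and again give $\lcm(r,s)=2$, a contradiction. Hence $h$ also acts by rotations on $(A',B')$, and we use these planes.

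Finally, with a common pair of planes on which both generators act as rotations, I would just record the orders: let $a,b$ be the orders of the rotations induced by $g$ on the two planes and $c,d$ those induced by $h$. A rotation acting with order $a$ on one plane and order $b$ on the orthogonal plane has order $\lcm(a,b)$, so $\lcm(a,b)$ equals the order of $g$, namely $r$, and similarly $\lcm(c,d)=s$; this is exactly the claim, the angle of an order-$a$ rotation being $\tfrac{2\pi}{a}$ up to a multiple coprime to $a$ that does not affect the cyclic group it generates. The main obstacle is precisely the bookkeeping of the reflection case: ensuring a single pair of planes serves \emph{both} $g$ and $h$ as honest rotations, which is exactly where the hypothesis $\lcm(r,s)>2$ is needed.
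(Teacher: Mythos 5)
Your proof is correct and rests on the same skeleton as the paper's: both invoke Lemma~\ref{L:commute} to obtain a common invariant orthogonal pair $A,B$, observe that each generator restricts to an element of $O(2)$ on each plane with matching determinants (so each generator is either rotation-type on both planes or reflection-type on both, the latter forcing order $2$), and use $\lcm(r,s)>2$ to rule out both generators being reflection-type. Where you genuinely diverge is the mixed case. The paper disposes of it by a commutation computation in $O(2)$: a reflection of a plane commutes with a rotation of that plane only when the rotation angle is $0$ or $\pi$, so if one generator is rotation-type of order greater than $2$, the other cannot be reflection-type on the same planes --- the mixed case simply does not occur, and the original $A,B$ serve for both. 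You instead accommodate the mixed case by discarding $A,B$ in favor of the $\pm 1$-eigenplanes of the order-$2$ generator and arguing that the other generator preserves these and must be rotation-type on them; this is also valid (it is essentially the same device the paper uses inside the proof of Lemma~\ref{L:commute} for the order-$2$ subcase), at the cost of carrying a case that a one-line matrix computation shows is vacuous. One small point in your favor: you explicitly note that the rotation angles are only $\frac{2\pi}{a}$ and $\frac{2\pi}{b}$ up to multiples coprime to $a$ and $b$, a caveat the paper's statement and proof gloss over but which does not affect any of its later applications.
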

\begin{proof}
By Lemma \ref{L:commute}, there must be a pair of completely orthogonal planes $A$ and $B$ which are invariant under both $g$ and $h$. Hence, as in the proof of Lemma \ref{L:commute}, after a change of basis $g$ and $h$ must have one of the forms below:
$$\left[\begin{matrix} R_\alpha & 0\\ 0 & R_\beta \end{matrix} \right] {\rm \ or\ } \left[\begin{matrix} MR_\alpha & 0\\ 0 & MR_\beta \end{matrix} \right]$$ \medskip
Since $\lcm(r,s) > 2$, at least one of $g$ or $h$ is not a rotation of order 2. So at least one preserves the orientations of the planes $A$ and $B$; since they commute, they must both preserve the orientations. Combined with the fact that $g^r = h^s = 1$, $g$ and $h$ must have matrices:
$$g = \left[\begin{matrix} R_\frac{2\pi}{a} & 0\\ 0 & R_\frac{2\pi}{b} \end{matrix} \right] {\rm \ and\ } h = \left[\begin{matrix} R_\frac{2\pi}{c} & 0\\ 0 & R_\frac{2\pi}{d} \end{matrix} \right]$$ \medskip
where $\lcm(a,b) = r$ and $\lcm(c,d) = s$, as desired.
\end{proof} \\

\section{Cyclic and Dihedral Groups}\label{S:cyclic}

If $\Gamma$ is an embedding of $K_{n,n}$ such that $\Z_m \subseteq \TSG(\Gamma)$, then it must have an automorphism of order $m$. The Automorphism Theorem tells us when this is possible.

\begin{lemma} \label{L:cyclic1}
Let $n \ge 2$. If $K_{n,n}$ has an embedding $\Gamma$ such that $\mathbb{Z}_{m}\subseteq\TSG(\Gamma)$, then either \begin{enumerate}
	\item $n\equiv 0,1,2 \pmod{m}$,
	\item $n \equiv 0 \pmod{\frac{m}{2}}$, where $m$ is even, or  
	\item $n\equiv 2 \pmod{\frac{m}{2}}$, where $4|m$.
\end{enumerate}
\end{lemma}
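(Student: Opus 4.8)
The plan is to deduce the three congruence conditions directly from the \textbf{Automorphism Theorem}, which enumerates exactly the cycle structures an order-$m$ automorphism $\varphi$ of $K_{n,n}$ can have when it is induced by an orientation-preserving diffeomorphism of some embedding. Since $\Z_m \subseteq \TSG(\Gamma)$ forces the existence of such a $\varphi$ of order $m$, I would simply run through the nine cases of the theorem and compute, in each case, the total number of vertices $n$ in each of the sets $V$ and $W$ modulo $m$ (or modulo $m/2$). The key bookkeeping observation is that outside of the finitely many ``exceptional'' fixed vertices and short cycles explicitly listed in each case, \emph{all} remaining vertices lie in $m$-cycles (that is, full-length orbits), so they contribute a multiple of $m$ to the count; the residue of $n$ is then determined entirely by the exceptional vertices.

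Carrying this out case by case: in case (1) there are no exceptions, so $n \equiv 0 \pmod m$; cases (2) and (3) allow $1$ or $2$ fixed vertices in $V$ (and up to $2$ in $W$), giving $n \equiv 1$ or $n \equiv 2 \pmod m$. These together yield condition (i). For the cycles of length dividing $m$: in case (4) a $j$-cycle with $j \mid m$ and the remaining $m$-cycles give $n \equiv 0 \pmod j$ where $j = m/k$ for some $k$; the cases that produce the genuinely new residues are those where $\varphi$ has order $m$ but acts with some $2$-cycles or with half-length cycles. Specifically, case (7)/(8), where $V$ and $W$ each contain a $2$-cycle (and in (8), $V$ contains $\frac{m}{2}$-cycles with $\frac{m}{2}$ odd), forces $m$ even and contributes $2$ vertices beyond multiples of $m/2$, yielding $n \equiv 2 \pmod{m/2}$; one checks the order constraints show this occurs precisely when $4 \mid m$, giving condition (iii). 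The remaining cases with half-length cycles (e.g. case (9), a single $4$-cycle spanning $V \cup W$ when $\varphi$ interchanges $V$ and $W$, together with $m$-cycles) account for condition (ii), $n \equiv 0 \pmod{m/2}$ with $m$ even.

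Concretely, my steps are: (1) invoke the \textbf{Automorphism Theorem} to get the list of allowed cycle structures for an order-$m$ automorphism; (2) for each case, separate the vertices into ``generic'' full $m$-cycles and the listed exceptional vertices/cycles, noting that the generic part contributes $0$ modulo the relevant modulus; (3) translate each exceptional configuration into a congruence on $n$, being careful to track which set ($V$ or $W$) the count applies to and to use that $|V| = |W| = n$; and (4) verify that the parity/divisibility side-conditions stated in the theorem (``$m$ even,'' ``$\frac{r}{2}$ odd,'' etc.) collapse to exactly the hypotheses ``$m$ even'' and ``$4 \mid m$'' appearing in conditions (ii) and (iii). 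Finally I would observe that since every allowed $\varphi$ falls into one of these cases, the disjunction of (i)--(iii) is necessary.

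I expect the main obstacle to be the careful handling of the half-length-cycle cases, namely disentangling when an order-$m$ automorphism built from $j$-cycles and $k$-cycles with $m = \lcm(j,k)$ produces residue $2 \pmod{m/2}$ versus $0 \pmod{m/2}$, and confirming the precise divisibility hypothesis ($4 \mid m$ rather than merely $m$ even) under which the residue-$2$ case is realizable. In particular, case (8) couples a constraint on the parity of $\frac{m}{2}$ with the presence of $2$-cycles in \emph{both} $V$ and $W$, and one must check that the $2$-cycles (which live in an order-$2$ rotation) are compatible with the global order-$m$ action only when $4 \mid m$; reconciling this with the $\frac{m}{2}$-cycles of case (8) is the delicate arithmetic step. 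The rest is routine orbit-counting once the exceptional vertices in each case are identified.
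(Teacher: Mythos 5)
Your overall strategy---running through the nine cases of the Automorphism Theorem and reading off a congruence from the exceptional vertices in each---is exactly the paper's approach, but your case-by-case bookkeeping contains several genuine errors, and two of them would derail the proof. First, you have the sources of conditions (ii) and (iii) essentially swapped and misidentified. Condition (ii) ($n \equiv 0 \pmod{\frac{m}{2}}$, $m$ even) actually arises from case (1): when $\varphi$ interchanges $V$ and $W$, all $2n$ vertices of $V \cup W$ lie in $m$-cycles, so one only gets $m \mid 2n$, hence possibly just $\frac{m}{2} \mid n$; your claim that case (1) yields $n \equiv 0 \pmod{m}$ misses this, and nothing else in your argument recovers condition (ii) correctly. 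Condition (iii) comes from case (9), not from cases (7)/(8): there $m \mid (2n-4)$ and the $4$-cycle forces $4 \mid m$, giving $n \equiv 2 \pmod{\frac{m}{2}}$. Your assignment of cases (7)/(8) to condition (iii) is backwards in a way that matters: case (8) explicitly requires $\frac{m}{2}$ to be \emph{odd}, i.e.\ $4 \nmid m$, so the congruence $n \equiv 2 \pmod{\frac{m}{2}}$ you extract there is not covered by condition (iii) at all. The correct move in cases (7) and (8) is to count the set $W$, which contains only one $2$-cycle plus $m$-cycles, yielding the stronger $n \equiv 2 \pmod{m}$, which is condition (i).

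This points to the key observation you are missing throughout: in each case of the Automorphism Theorem the exceptional cycles are confined to the explicitly named vertex set(s), so one should count the \emph{other} set, which is partitioned entirely into $m$-cycles. Without this, your case (4) only gives $n \equiv 0 \pmod{j}$ for some divisor $j$ of $m$, which is not one of the lemma's three conditions and so does not prove the statement; counting $W$ instead gives $n \equiv 0 \pmod{m}$ (and likewise disposes of cases (2) and (5)). Finally, you defer case (6) ($m = \lcm(j,k)$ with $j$-cycles in $V$ and $k$-cycles in $W$) as ``the delicate arithmetic step'' without carrying it out; the paper resolves it by showing $m \mid ck$ for the $k$-cycle contribution, whence $n \equiv 0 \pmod{m}$. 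As written, your argument establishes the lemma only in cases (1) (with $\varphi(V)=V$), (3), and (9) (with the wrong label), and fails or is absent in the others.
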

\begin{proof}
Let $\varphi$ be a generator of $\mathbb{Z}_{m}\subseteq\TSG(\Gamma)$, so $\varphi$ is an automorphism of $K_{n,n}$ of order $m$ realized by a orientation-preserving symmetry of $\Gamma$. From the Automorphism Theorem, we have nine cases. In case (1), $\vf$ has only $m$-cycles, so $m \vert 2n$.  Then either $m \vert n$ or $m$ is even and $\frac{m}{2} \vert n$.  Hence either $n \equiv 0 \pmod{m}$ or $m$ is even and $n \equiv 0 \pmod{\frac{m}{2}}$.  In cases (2), (4) and (5), $\vf(V) = V$ and $W$ contains only $m$-cycles, so $n \equiv 0 \pmod{m}$. In cases (3), (7) and (8), $m \vert (n-1)$ or $m \vert (n-2)$, so $n \equiv 1$ or $2 \pmod{m}$.

In case (6), $m = \lcm(j,k)$, $V$ contains some $j$-cycles and $W$ contains some $k$-cycles. Then $n = aj + bm$ for some $a,b \in \mathbb{Z}$ and also $n = ck +dm$ for some $c,d \in \mathbb{Z}$. Thus $aj + bm = ck +dm$, which implies that $dm-bm = aj - ck$. Thus $m(d-b) = aj-ck$, meaning that $m\vert (aj-ck)$. Since $m =\lcm(j,k)$ this means that $j\mid m$ and thus $j\mid (aj-ck)$. So $j\mid ck$, i.e. $ck$ is a multiple of $j$. Also, $ck$ is a multiple of $k$, and thus $ck$ is a common multiple of $j$ and $k$. Since $m$ = lcm($j,k$) and $ck$ is a common multiple of $j$ and $k$, then we have that $m \mid ck$. Hence $m \vert (ck+dm)$, so $n \equiv 0 \pmod{m}$.

Finally, in case (9), $\vf(V) = W$ and $V\cup W$ contains a 4-cycle.  So $m \vert (2n-4)$ and $4 \vert m$.  This means $\frac{m}{2} \vert (n-2)$. So $n \equiv 2 \pmod{\frac{m}{2}}$, with $4 \vert m$.
 \end{proof}\medskip

Now we will prove that, under each of the conditions of Lemma \ref{L:cyclic1}, we can find embeddings of $K_{n,n}$ whose topological symmetry group is $\Z_m$ or $D_m$. Recall that $\Z_m = \langle g \vert g^m = 1\rangle$ and $D_m = \langle g, \varphi \vert g^m = \varphi^2 = 1, \varphi g = g^{-1} \varphi\rangle$.  First we will show how, for each value of $n$ given in Lemma \ref{L:cyclic1}, we can find a group of motions $G$ isomorphic to $D_m$ and an embedding of the vertices of $K_{n,n}$ so that the action of $G$ fixes the vertices setwise. Next we will use the Edge Embedding Lemma to extend the embedding of the vertices to the edges to get an embedding $\Gamma$ of $K_{n,n}$ such that $D_m\subseteq \TSG(\Gamma)$. Finally we will use the Subgroup Lemma to show that we can find another embedding $\Gamma'$ of $K_{n,n}$ such that $\TSG(\Gamma') = D_m$, and the Subgroup Corollary to show there is yet another embedding $\Gamma''$ such that $\TSG(\Gamma'') = \Z_m$.

In our proofs, we will use the following subgroups of $\so(4)$.  Let $A$ be a plane in $\R^4$ and $B$ be its orthogonal complement, and let $C$ be a plane spanned by a vector in $A$ and a vector in $B$. We will let $X$, $Y$ and $Z$ denotes the intersections with $S^3$ of planes $A$, $B$ and $C$, respectively.

\begin{itemize}
\item Let $G_{1}$ be generated by a rotation $g$ around $A$ by $\frac{2\pi}{m}$, and a rotation $\varphi$ around $C$ by $\pi$. Then $g$ has order $m$ and $\varphi$ has order 2, and $G_{1}\cong D_{m}$.

\item Let $G_{2}$ be generated by a rotation $\varphi$ around $C$ by $\pi$, and a glide rotation $h$, which is the combination of a rotation by $\frac{4\pi}{m}$ around $A$ and a rotation by $\frac{2\pi}{m}$ around $B$. Then $h$ has order $m$ and $\varphi$ has order 2, and $G_{2}\cong D_{m}$.

\item Let $G_{3}$ be generated by a rotation $\varphi$ around $C$ by $\pi$, and a glide rotation $j$, which is the combination of a rotation by $\frac{\pi}{2}$ around $A$ and a rotation by $\frac{2\pi}{m}$ around $B$. Then $j$ has order $m$ and $\varphi$ has order 2, and $G_{3}\cong D_{m}$.

\end{itemize}

\begin{lemma} \label{L:n=0,1,2}
If $n \geq 3$, $n \equiv 0, 1, 2 \pmod{m}$ and $H = \Z_m$ or $D_m$, then there exists an embedding, $\Gamma$, of $K_{n,n}$ in $S^{3}$ such that $\TSG(\Gamma)=H$.
\end{lemma}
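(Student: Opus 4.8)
The plan is to realize $D_m$ (and afterwards cut the group down to $\Z_m$) using the group $G_1 = \langle g, \varphi \rangle \cong D_m$ of the previous subsection, where $g$ is the rotation by $\frac{2\pi}{m}$ about the geodesic $X$ and $\varphi$ is the rotation by $\pi$ about the geodesic $Z$. First I would embed the $2n$ vertices of $K_{n,n}$ as a $G_1$-invariant set inducing a faithful action on the graph, treating the three congruence classes separately. When $n \equiv 0 \pmod m$ every vertex lies in a free $m$-orbit of $g$ placed off $X$; when $n \equiv 1 \pmod m$ I add one $g$-fixed vertex to each of $V$ and $W$; and when $n \equiv 2 \pmod m$ I add two $g$-fixed vertices to each of $V$ and $W$. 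All $g$-fixed vertices are placed on the axis circle $X$, and the free orbits are chosen generically so that no vertex lies on $X$ or on the axis $Z_i = \fix(g^i \varphi)$ of any reflection. Crucially, when $n \equiv 1, 2 \pmod m$ I would let $\varphi$ interchange $V$ and $W$, pairing each $g$-fixed vertex of $V$ at angle $\theta$ on $X$ with a $g$-fixed vertex of $W$ at the reflected point $-\theta$; the counts match because $n \equiv 1, 2 \pmod m$ with $n \geq 3$ forces $V$ and $W$ to contain equally many free orbits, and at least one.

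Next I would apply the Edge Embedding Lemma to extend this vertex embedding to an embedding $\Gamma$ of $K_{n,n}$ with $D_m \subseteq \TSG(\Gamma)$. Since no vertex lies on any $Z_i$, the only adjacent pairs pointwise fixed by a non-trivial element are those joining two $g$-fixed vertices, which all lie on $X$; every element fixing such a pair pointwise is a power of $g$ with fixed-point set exactly $X$, so hypothesis (1) holds, and hypotheses (4) and (5) hold because the only pair-interchanging elements are reflections $g^i\varphi$, whose axes $Z_i$ are non-empty, mutually distinct, and contain no vertices. The main obstacle is hypotheses (2) and (3): the $g$-fixed vertices together with the edges between them form a copy of $K_{1,1}$ (case $n \equiv 1$) or $K_{2,2}$ (case $n \equiv 2$) that must be realized by disjoint, $G_1$-equivariant arcs inside the single circle $X$. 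This is precisely why $\varphi$ is taken to interchange $V$ and $W$: choosing the $g$-fixed vertices as $\varphi$-paired points $\pm\theta_i$ makes the four fixed vertices of the $K_{2,2}$ occur in alternating $V,W,V,W$ cyclic order, so the four edges become the four consecutive arcs tiling $X$, with the reflections fixing the two poles $X \cap Z$ and preserving the two arcs through them while swapping the other two. I would then check that these arcs have interiors disjoint from the vertex set and are permuted by $G_1$, giving (2) and (3); in the $K_{1,1}$ case the single arc through a pole is automatically preserved.

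Finally, to sharpen $D_m \subseteq \TSG(\Gamma)$ to an equality, and then to obtain $\Z_m$, I would invoke the Subgroup Lemma and Subgroup Corollary. For the Subgroup Lemma I would select representative edges $\ep_1, \dots, \ep_n$ from distinct $D_m$-orbits so that any automorphism fixing $\ep_1$ pointwise and each orbit $\langle \ep_i \rangle_{D_m}$ setwise is forced to fix pointwise a subgraph of $K_{n,n}$ that cannot embed in $S^1$, yielding a re-embedding $\Gamma'$ with $\TSG(\Gamma') = D_m$. Since $n \geq 3$ guarantees at least one free $m$-orbit in each of $V$ and $W$, there is an edge joining two free-orbit vertices whose embedded arc can be routed to avoid $X$ and every $Z_i$, hence is pointwise fixed by no non-trivial element of $D_m$; the Subgroup Corollary then produces an embedding $\Gamma''$ with $\TSG(\Gamma'') = \Z_m$. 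The one genuinely delicate step is the equivariant arc construction of the second paragraph; the remaining verifications are routine bookkeeping with the orbit structure.
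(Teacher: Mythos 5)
Your construction and your verification of the Edge Embedding Lemma hypotheses follow the paper's proof essentially verbatim: the same group $G_1$, the same placement of the $2\ep$ exceptional vertices on $X$ in alternating order with $\vf(v_i)=w_i$, and the same key observation that every reflection $\vf g^i$ restricts to $X$ as one and the same reflection fixing the two poles $X\cap Z$ (since $g$ fixes $X$ pointwise), so that the two arcs through the poles are preserved and the other two are swapped. That part is sound and matches the paper.

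The genuine gap is your last paragraph. Passing from $D_m\subseteq\TSG(\Gamma)$ to $\TSG(\Gamma')=D_m$ via the Subgroup Lemma is not routine bookkeeping: you must actually exhibit edges $\ep_1,\dots,\ep_k$ with distinct orbits and prove that any automorphism fixing $\ep_1$ pointwise and each orbit setwise pins down a subgraph that cannot embed in $S^1$, and the argument genuinely depends on $m$. For $m\ge 3$ the paper takes $e_1=\overline{v_0w_0}$, $e_2=\overline{v_0w_1}$, $e_3=\overline{v_0w_2}$ inside one free $2m$-orbit and checks that each $e_i$ is the unique edge of its orbit adjacent to $v_0$, forcing a fixed $K_{1,3}$. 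For $m=2$ this breaks down: a single free orbit has only four vertices, and the most one can force to be fixed is a $K_{2,2}$, which is a $4$-cycle and \emph{does} embed in $S^1$, so the Subgroup Lemma does not apply. The paper must treat $m=2$ separately, using either a second free orbit (when $n$ is even) or the two vertices embedded on $X$ (when $n$ is odd) to enlarge the fixed subgraph to a $K_{3,2}$. Your proposal neither carries out the $m\ge 3$ edge-orbit computation nor notices the $m=2$ obstruction, so the claimed equality $\TSG(\Gamma')=D_m$ is unproved as written. (Your final step, obtaining $\Z_m$ from the Subgroup Corollary via an edge whose endpoints lie in a free orbit, is fine, and in fact needs no routing argument: a nontrivial element fixing that edge pointwise would have to fix its endpoints.)
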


\begin{proof}
We will first suppose that $H = D_m$, and then use the Subgroup Corollary to show there are also embeddings for $H = \Z_m$.  Since $n\equiv 0, 1, 2 \pmod{m}$, then $2n=2mk + 2\ep$ for some $k \in \mathbb{Z}$, where $\ep = 0, 1, 2$. We will use the group of motions $G_{1}$. Pick a small ball, $M$, such that for each non-trivial $h \in G_1$, $h(M) \cap M = \emptyset$ (i.e. $G_1$ acts freely on $M$).  Pick $k$ points, $p_{1}, \ldots, p_{k}$, in $M$. Each $p_{i}$ has an orbit of size $2m$. Embed $2mk$ of the vertices of $V \cup W$ so that vertices of $V$ are embedded as the points $g^{i}(p_{j})$ and vertices of $W$ are embedded as the points $\varphi g^{i}(p_{j})$. This means that $g(V) = V$ and $\vf(V) = W$. If $\ep = 0$, we have embedded all the vertices of $V \cup W$.  If $\ep = 1$, we embed the remaining vertices $v_1$ and $w_1$ on the circle $X$ (the axis for $g$), so that $\vf(v_1) = w_1$ (note that $\vf$ preserves $X$ setwise).  If $\ep = 2$, we embed the remaining four vertices $v_1, v_2, w_1, w_2$ on $X$ so that $v$'s and $w$'s alternate around the circle, and $\vf(v_i) = w_i$.

Now we must check the conditions of the Edge Embedding Lemma. The only vertices fixed by any element of $G_1$ are the $2\ep$ vertices embedded on $X$, which are fixed by every $g^i$.  Since all of these motions fix the circle $X$, condition (1) is satisfied.  In every case, for each fixed pair $\{v,w\}$ we can find an arc $A_{vw}$ on $X$ which is disjoint from the vertices and the other arcs (when $\ep = 2$, this is because the $v$'s and $w$'s alternate), so condition (2) is satisfied.  Each $g^i$ fixes $A_{vw}$ pointwise, and $\vf g^i$ fixes $A_{v_iw_i}$ setwise for $i=1,2$ (but does not fix the endpoints of the other arcs, when $\ep = 2$).  So condition (3) is satisfied.  All $\varphi g^{i}$ interchange pairs, but their fixed point sets do not contain any vertices, and no $g^{i}$ interchange pairs, and thus condition (4) is satisfied. Also, $\fix(\varphi g^{i}) \cong S^{1}$, $\fix(\varphi g^{i}) \neq \fix(\varphi g^{k})$ if $i \neq k$, and $\fix(\varphi g^{i}) \neq \fix(g^{k})$, so condition (5) is satisfied. Thus we are able to embed the edges of $K_{n,n}$ in $S^{3}$ such that the resulting embedding is setwise invariant under $G_1$ and we get an embedding $\Gamma$ such that $D_{m}\subseteq \TSG(\Gamma)$.

Now we will apply the Subgroup Lemma to show that we can modify the embedding so that $\TSG = D_m$. We will first assume that $m \ge 3$, and then cover the case when $m=2$. Since $n \geq 3$, we have embedded at least one orbit of size $2m$, containing vertices $v_{0}, ... , v_{m-1}$ and $w_{0}, ... ,w_{m-1}$. Since $g(V) = V$, label the vertices so that $g^i(v_0) = v_i$ and $g^i(w_0)=w_i$. We further define our embedding so that $\varphi(v_{0}) = w_{0}$. Thus $\varphi g^{i} (v_{0}) = g^{-i}(w_0) = w_{m-i}$ and $\varphi g^{i} (w_{0}) = g^{-i}(v_0) = v_{m-i}$. Consider an edge $e_{1} = \overline{v_{0}w_{0}}$.  Then we have that $g^{i}(e_{1}) = g^{i}(\overline{v_{0}w_{0}}) = \overline{v_{i}w_{i}}$ and $\varphi g^{i} (e_{1}) = \varphi g^{i}(\overline{v_{0}w_{0}}) = \overline{w_{m-i}v_{m-i}}$. Thus $\langle e_{1} \rangle _{G_1} = \{ \overline{v_{i}w_{i}}: 0 \leq i \leq m-1\}$. Also consider the edges $e_{2} = \overline{v_{0}w_{1}}$ and $e_{3} = \overline{v_{0}w_{2}}$. We have that $g^{i}(e_{2}) = g^{i}(\overline{v_{0}w_{1}}) = \overline{v_{i}w_{i+1}}$ and $\varphi g^{i} (e_{2}) = \varphi g^{i}(\overline{v_{0}w_{1}}) = \overline{w_{m-i}v_{m-i-1}}$. Thus $ \langle e_{2}\rangle _{G_1} = \{ \overline{v_{i}w_{i+1}}: 0 \leq i \leq m-1\}$. Similarly, $\langle e_{3}\rangle _{G_1} = \{ \overline{v_{i}w_{i+2}}: 0 \leq i \leq m-1\}$.  Thus we have that $\langle e_{1} \rangle_{G_1}$, $\langle e_{2} \rangle_{G_1}$, and $\langle e_{3} \rangle _{G_1}$ are all distinct. Suppose $\psi$ is an automorphism of $K_{n,n}$ which fixes $e_{1}$ pointwise, and fixes $\langle e_{1} \rangle_{G_1}$, $\langle e_{2} \rangle_{G_1}$ and $\langle e_{3} \rangle_{G_1}$ setwise. This implies that $v_{0}$ and $w_{0}$ are both fixed. Since $\langle e_{2} \rangle_{G_1} = \{ \overline{v_{i}w_{i+1}}: 0 \leq i \leq m-1\}$, $e_{2}$ is the only edge in the orbit of $e_{2}$ that is adjacent to $v_{0}$. Since $v_{0}$ is fixed, this means that $e_{2}$ is fixed. Thus $w_{1}$ is also fixed. Similarly, since $\langle e_{3}\rangle _{G_1} = \{ \overline{v_{i}w_{i+2}}: 0 \leq i \leq m-1\}$, $e_{3}$ is the only edge in the orbit of $e_{3}$ that is adjacent to $v_{0}$, and so $e_{3}$ is fixed. This means that $w_{2}$ is fixed. Thus we have that a fixed subgraph isomorphic to $K_{3,1}$. Since $K_{3,1}$ cannot be embedded in $S^{1}$, the Subgroup Lemma implies there exists an embedding $\Gamma'$ of $K_{n,n}$ such that $\TSG(\Gamma') = D_{m}$, when $m \ge 3$.  Moreover, $e_1$ is not fixed by any non-trivial element of $\TSG(\Gamma')$, so by the Subgroup Corollary there is another embedding $\Gamma''$ of $K_{n,n}$ such that $\TSG(\Gamma'') = Z_m$.

We still need to deal with the case when $m = 2$. We will divide this into two subcases, when $n$ is even and when $n$ is odd.  If $n = 2r$, then $2n = 4r$, and all of the vertices of $K_{n,n}$ are embedded in $r$ orbits, each containing $4$ vertices. Consider one of these $r$ orbits, containing vertices $v_{0}, w_{0}, v_{1},$ and $w_{1}$. As when $m \geq 3$, choose the labels so that $g(v_0) = v_1$, $g(w_0) = w_1$ and $\vf(v_0) = w_0$.  Consider edges $e_{1} = \overline{v_{0}w_{0}}$, and $e_{2} = \overline{v_{0}w_{1}}$.  Then $\langle e_{1} \rangle _{G_1} = \{ \overline{v_{0}w_{0}}, \overline{v_{1}w_{1}}\}$ and $\langle e_{2}\rangle _{G_1} = \{ \overline{v_{0}w_{1}}, \overline{v_{1}w_{0}}\}$.  Thus the orbits of $e_{1}$ and $e_{2}$ are distinct. Suppose $\psi$ is an automorphism of $K_{n,n}$ which fixes $e_{1}$ pointwise, and fixes $\langle e_{1} \rangle_{G_1}$ and $\langle e_{2} \rangle_{G_1}$ setwise. This implies that $v_{0}$ and $w_{0}$ are both fixed. Since $e_{2}$ is the only edge in $\langle e_{2}\rangle _{G_1}$ that is adjacent to $v_{0}$, this means that $e_{2}$ is fixed, which implies that $w_{1}$ is also fixed. Thus we have that $v_{0}, w_{0},$ and $w_{1}$ are all fixed. Since $v_{1}$ is the only other vertex in the orbit it must be that $v_{1}$ is also fixed. Thus we have a subgraph $K_{2,2}$ fixed pointwise by $\psi$. Since $n \geq 3$ we must have another orbit of $2m = 4$ vertices, which we will call $v_{0}', w_{0}', v_{1}',$ and $w_{1}'$, labeled using the same conventions used for the first orbit. Consider the edge $f_{1} = \overline{w_{0}v_{0}'}$. Then $\langle f_{1} \rangle_{G_1} = \{ \overline{w_{0}v_{0}'}, \overline{v_{0}w_{0}'}, \overline{v_{1}w_{1}'}, \overline{w_{1}v_{1}'} \}$. Assume $\psi$ also fixes $\langle f_1 \rangle_{G_1}$ setwise.  Since $f_{1}$ is the only edge in $\langle f_{1} \rangle _{G_1}$ that is adjacent to $w_{0}$, then $f_{1}$ is fixed, and thus $v_{0}'$ is fixed. So we now have a subgraph $K_{3,2}$ that is fixed pointwise by $\psi$. Since $K_{3,2}$ cannot be embedded in $S^{1}$, by the Subgroup Lemma there is an embedding $\Gamma'$ of $K_{n,n}$ so that $\TSG( \Gamma') = D_{2}$. 

If $n=2r+1$ then all of the vertices of $K_{n,n}$, except for one from each of $V$ and $W$, are embedded in $r$ orbits, each containing $4$ vertices of $V \cup W$. Since $n \geq 3$, $r \neq 0$, and thus there must be $4$ vertices embedded in one of these orbits. As above, we can fix $e_{1} = \overline{v_{0}w_{0}}$ and this will cause all four of these vertices to be fixed. Now consider the vertices $v_{2}$ and $w_{2}$ which are the two vertices embedded on the axis of $g$. Consider the edge $f_{1} = \overline{w_{0}v_{2}}$. Since $\langle v_{2} \rangle _{G_1} = \{v_{2}, w_{2}\}$ and $w_{0}$ is not fixed by any element of $G_1$, we have that $f_{1}$ is the only edge in $\langle f_{1} \rangle _{G_1}$ that is adjacent to $w_{0}$. Since $w_{0}$ is fixed, this implies that $v_{2}$ is fixed. Thus we have that a subgraph $K_{3,2}$ is fixed pointwise. Since $K_{3,2}$ cannot be embedded in $S^{1}$ there is an embedding $\Gamma'$ of $K_{n,n}$ so that $\TSG( \Gamma') = D_{2}$.

In either case, when $m = 2$, the edge $e_1 = \overline{v_0w_0}$ is not fixed by any non-trivial element of $\TSG(\Gamma')$.  So by the Subgroup Corollary there is another embedding $\Gamma''$ of $K_{n,n}$ such that $\TSG(\Gamma'') = \Z_2$.
\end{proof} \medskip

\begin{lemma} \label{L:n=m/2}
If $n \geq 3$, $m$ is even, $n\equiv 0 \pmod{\frac{m}{2}}$ and $H = \Z_m$ or $D_m$, then there exists an embedding, $\Gamma$, of $K_{n,n}$ in $S^{3}$ such that $\TSG(\Gamma)=H$.
\end{lemma}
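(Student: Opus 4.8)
The plan is to follow the same three-stage strategy as in Lemma \ref{L:n=0,1,2}, but using the group of motions $G_2 \cong D_m$ built from the glide rotation $h$. Throughout I assume $m \ge 4$; the case $m=2$ gives $D_2$ and $\Z_2$ and is already covered by Lemma \ref{L:n=0,1,2} (and the glide rotation degenerates to an order-2 rotation when $m=2$). Since $h$ rotates the circle $X$ by $\frac{2\pi}{m}$ and the circle $Y$ by $\frac{4\pi}{m}$, it has order $m$, and its only power with nonempty fixed-point set (other than the identity) is $h^{m/2}$, which rotates $X$ by $\pi$ and fixes $Y$ pointwise; thus $\fix(h^{m/2}) = Y$ while every other nontrivial power of $h$ is a fixed-point-free glide rotation.

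First I would embed the vertices. Writing $n = \frac{m}{2}t$ with $t = \frac{2n}{m} \ge 1$ (note $n \ge 3$ forces $n \ge m/2$), choose $t$ generic points $p_1, \dots, p_t$ on the axis circle $Z$ of $\varphi$, positioned so that their $\langle h\rangle$-orbits are disjoint and avoid $X$ and $Y$. Each orbit $\{h^i p_j : 0 \le i < m\}$ has size $m$, and since $\varphi$ fixes $Z$ pointwise we have $\varphi(p_j) = p_j$, so each orbit is setwise invariant under all of $G_2$. Now label the vertices alternately, putting $h^{2i} p_j$ in $V$ and $h^{2i+1} p_j$ in $W$ for $0 \le i < m/2$. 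Because $m$ is even this is consistent, and it makes $h$ interchange $V$ and $W$, while $\varphi$ (which sends $h^i p_j \mapsto h^{-i} p_j$, preserving parity) setwise fixes each of $V$ and $W$. This embeds exactly $n$ vertices in each part, realizing $n \equiv 0 \pmod{m/2}$; moreover $h$ acts on each orbit as an $m$-cycle, so $\langle h \rangle$ induces a faithful $\Z_m$ and $G_2$ induces a faithful $D_m$.

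Next I would verify the hypotheses of the Edge Embedding Lemma, splitting into the cases $4 \mid m$ and $m \equiv 2 \pmod 4$. The delicate condition is (5): every element interchanging an adjacent pair must have nonempty fixed-point set, distinct from that of every other element. Here the glide rotation is essential. A power $h^i$ interchanges an edge only when $2i \equiv 0 \pmod m$, i.e. $i = m/2$, and $h^{m/2}$ effects a $V$-$W$ swap precisely when $m/2$ is odd; in that case $\fix(h^{m/2}) = Y$, and because every other power of $h$ is fixed-point-free and every reflection $\varphi h^i$ has axis distinct from $Y$, this circle is shared with no other element. The remaining interchanging elements are the reflections $\varphi h^i$ with $i$ odd; these swap $V$ and $W$, fix no vertices, and have mutually distinct axes, so conditions (4) and (5) hold. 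When $m \equiv 2 \pmod 4$ each even reflection $\varphi h^i$ fixes one $V$-vertex and one $W$-vertex (indices differing by the odd number $m/2$), and I would join each such adjacent fixed pair by an arc of its axis circle to check conditions (1)--(3); when $4 \mid m$ the two vertices fixed by each reflection lie in the same part, so no adjacent pair is pointwise fixed and these conditions are vacuous. This yields an embedding $\Gamma$ with $D_m \subseteq \TSG(\Gamma)$.

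Finally, exactly as in Lemma \ref{L:n=0,1,2}, I would apply the Subgroup Lemma using edges $e_1 = \overline{v_0 w_0}$, $e_2 = \overline{v_0 w_1}$, $e_3 = \overline{v_0 w_2}$ within a single orbit when $m \ge 6$ (so that $m/2 \ge 3$ supplies three distinct $W$-vertices), and with one additional orbit when $m = 4$, as in the $m=2$ argument there: an automorphism fixing $e_1$ pointwise and each orbit $\langle e_i\rangle_{G_2}$ setwise is forced to fix a copy of $K_{3,1}$ (or $K_{3,2}$), which cannot embed in $S^1$, giving a re-embedding $\Gamma'$ with $\TSG(\Gamma') = D_m$. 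Since $e_1$ is not pointwise fixed by any nontrivial element of $G_2$, the Subgroup Corollary then produces $\Gamma''$ with $\TSG(\Gamma'') = \Z_m$. I expect the main obstacle to be precisely the bookkeeping in the Edge Embedding Lemma: the crucial point is that a glide rotation, rather than an ordinary rotation of order $m$ about a single axis, is exactly what guarantees that $\fix(h^{m/2})$ is not duplicated by the other powers of $h$, which would otherwise violate condition (5) when $m/2$ is odd.
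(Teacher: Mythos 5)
There is a genuine gap in your vertex embedding. You place \emph{all} $2n$ vertices on $\langle h\rangle$-orbits of points $p_1,\dots,p_t$ chosen on the circle $Z=\fix(\varphi)$, with $t=\tfrac{2n}{m}$. Since $h^{m/2}$ maps $Z$ to itself, each fixed circle $h^i(Z)=\fix(\varphi h^{-2i})$ then contains the $2t$ vertices $h^ip_1,\dots,h^ip_t,h^{i+m/2}p_1,\dots,h^{i+m/2}p_t$, arranged in two blocks. In the case $4\nmid m$ (where $m/2$ is odd), $t$ of these lie in $V$ and $t$ in $W$, so \emph{every} one of the $t^2$ pairs $\{h^ip_j,\,h^{i+m/2}p_k\}$ is an adjacent pair contained in $\fix(\varphi h^{-2i})$, and condition (2) of the Edge Embedding Lemma demands an arc on that circle for each of them with pairwise disjoint interiors avoiding the vertices. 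A circle with $2t$ marked points in the arrangement $V,\dots,V,W,\dots,W$ has only two complementary arcs bounded by a $V$--$W$ pair, so this is impossible once $t\ge 2$; and in the relevant case $n\equiv \tfrac{m}{2}\pmod m$ with $n>\tfrac{m}{2}$ one has $t=2r+1\ge 3$ (e.g.\ $m=6$, $n=9$ gives $9$ required arcs among $6$ complementary arcs). Your assertion that each ``even reflection'' fixes only one vertex of $V$ and one of $W$ is false in your own setup: it fixes $t$ of each. The paper avoids this precisely by putting all but $m$ of the vertices into free orbits inside a small ball $M$ disjoint from every axis, so that each circle $h^i(Z)$ carries exactly two vertices; only one $\langle h\rangle$-orbit is placed on $Z$.

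A secondary problem is the last stage. Transplanting the edges $e_2=\overline{v_0w_1}$, $e_3=\overline{v_0w_2}$ from Lemma \ref{L:n=0,1,2} does not work for $G_2$: here the element of $G_2$ fixing $v_0=p_j$ is $\varphi$, which sends $\overline{v_0w_k}$ to the \emph{different} edge $\overline{v_0 w_{m/2-k-1}}$ in the same orbit, so the orbit of $e_k$ contains two edges adjacent to $v_0$ and fixing $v_0$ together with $\langle e_k\rangle_{G_2}$ setwise does not force $e_k$ to be fixed. The paper instead runs a chain argument along the single $m$-cycle $(p_0p_1\cdots p_{m-1})$, using only the orbit of $\overline{p_0p_1}$, whose only two members adjacent to a given fixed vertex are consecutive edges of the cycle; this propagates around the cycle and fixes a $K_{3,3}$ (with a separate argument for $m=4$). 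Your $4\mid m$ case of the edge-embedding check is essentially fine, but the $4\nmid m$ case and the subgroup-lemma stage both need to be redone along the paper's lines.
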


\begin{proof}
We have already dealt with the case when $n \equiv 0 \pmod{m}$ in Lemma \ref{L:n=0,1,2}, so we may assume $n = mr+\frac{m}{2}$ for some $r \in \mathbb{Z}$. We will use the group of motions $G_{2}$. Observe that for all $i$, where $i \neq 0$ and $i \neq \frac{m}{2}$, $h^{i}$ has no fixed points. Also notice that $h^{m/2}$ is a rotation of order 2 with axis $Y$. Since $n = mr+\frac{m}{2}$, then $2n = 2mr +m$. Pick a small ball, $M$, such that for each non-trivial $h \in G_2$, $h(M) \cap M = \emptyset$ (i.e. $G_2$ acts freely on $M$).  Pick $r$ points $\{p_1, \dots, p_r\}$ in $M$; then each of these points has an orbit containing $2m$ points. Embed $2mr$ of the vertices of $V \cup W$ in $S^3$ so that vertices of $V$ are embedded as the points $h^{2i}(p_j)$ and $\varphi h^{2i}(p_j)$, and vertices of $W$ are embedded as the points $h^{2i+1}(p_j)$ and $\varphi h^{2i+1}(p_j)$. Then we have that $h(V) = W$ and $\varphi(V) = V$. Now we must embed the remaining $m$ vertices of $V \cup W$. Take a point $q$ on $Z - (X \cup Y)$. Then the orbit of $q$ under the action of $G_2$ contains $m$ points. Embed a vertex of $V$ as $q$, and embed the rest of the $m$ vertices of $V \cup W$ as the points $h^{i}(q)$, alternating $v$'s and $w$'s. Observe that $h^{i}(q)$ and $h^{i+\frac{m}{2}}(q)$ are both on the axis of $\vf h^{-2i}$. Now we must check that we can embed the edges of the graph. 

If $4 \vert m$, then $\frac{m}{2}$ is even, so $i$ and $(i+\frac{m}{2})$ have the same parity. Thus $h^{i}(q)$ and $h^{i+\frac{m}{2}}(q)$ are both in $V$ or both in $W$. So no adjacent pairs lie on the same axis, and hence no such pairs are fixed by any elements of $G_2$. So conditions (1), (2), and (3) of the Edge Embedding Lemma are satisfied. Also, since $\frac{m}{2}$ is even, $h^{m/2}(V) = V$, so $h^{m/2}$ does not interchange pairs bounding an edge. The only other rotations are $\vf h^i$, which also send $V$ to $V$, so no pairs are interchanged. Thus conditions (4) and (5) are also satisfied. 

Now we will consider the case when $4 \nmid m$, so $h^{m/2}(V) = W$. None of the $2mr$ vertices embedded in $S^3 - (X \cup Y \cup Z)$ are fixed by any elements, and each of the pairs in the $m$ vertices is fixed by only one element, and thus condition (1) of the Edge Embedding Lemma is satisfied. Since $h$ is a glide rotation, each of the pairs $\{h^i(q), h^{i+\frac{m}{2}}(q)\}$ bounds an arc on the axis of $\vf h^{-2i}$ that is disjoint from the other vertices and the other such arcs. Thus condition (2) is satisfied. Each $\varphi h^{-2i}$ fixes an adjacent pair of vertices, but for each of those pairs the arc between the vertices lies on the axis of involution. Thus the arc is fixed by the same involution that the pair of vertices is fixed by. Also, $h^{m/2}$ interchanges the end points of each arc in the $m$ vertices, but it fixes each arc setwise. So condition (3) holds. Only $h^{m/2}$ and $\varphi h^{\frac{m}{2}-2i}$ interchange pairs of adjacent vertices. But $h^{m/2}$ fixes no vertices and, since $\frac{m}{2}-2i$ is odd, neither does $\varphi h^{\frac{m}{2}-2i}$, and so condition (4) is met. Both $h^{m/2}$ and $\varphi h^{\frac{m}{2}-2i}$ have fixed point sets that are homeomophic to $S^{1}$ and also different than all fixed point sets of other elements in $G_2$. Thus condition (5) is satisfied. Therefore we are able to embed the edges of $K_{n,n}$ in $S^{3}$ such that the resulting embedding is setwise invariant under $G_2$ and we get an embedding $\Gamma$ such that $D_{m}\subseteq\TSG(\Gamma)$.

Now we will apply the Subgroup Lemma to show that we can modify the embedding so that $\TSG = D_m$. Recall that $m$ must be even and $n = mr+\frac{m}{2}$, so $2n = 2mr + m$. If $m = 2$, then $n \equiv 1 \pmod{m}$, which we have already dealt with in Lemma \ref{L:n=0,1,2}.  So $m \geq 4$.  We first suppose $m \geq 6$. Then there an $m$-cycle $(p_0p_1\dots p_{m-1})$, where $p_{2i}$ is in $V$ and $p_{2i+1}$ is in $W$. So $h^{j}(p_{i}) = p_{i+j}$ and $\varphi (p_{i}) = p_{m-i}$, and thus $\varphi h^{j} (p_{i}) = \varphi (p_{i+j}) = p_{m-i-j}$. Consider the edge $e_{1} = \overline{p_{0}p_{1}}$. We have that $h^{j}(e_{1}) =\overline{p_{j}p_{j+1}}$ and $\varphi h^{j} (e_{1}) = \overline{p_{m-j}p_{m-j-1}}$. Thus $\langle e_{1}\rangle _{G_2} = \{ \overline{p_{j}p_{j+1}}: 0 \leq j \leq m-1\}$. Suppose $\psi$ is an automorphism of $K_{n,n}$ which fixes $e_{1}$ pointwise, and fixes $\langle e_{1} \rangle_{G_2}$ setwise. This means that $p_{0}$ and $p_{1}$ are both fixed. The only edges in $\langle e_{1}\rangle _{G_2}$ adjacent to $p_{1}$ are $e_{1} = \overline{p_{0}p_{1}}$ and $e_{2} = g(e_{1}) = \overline{p_{1}p_{2}}$. Since $e_{1}$ and $p_{1}$ are fixed, so is $e_{2}$. Thus $p_{2}$ is also fixed. Similarly, the only edges in $\langle e_{1}\rangle _{G_2}$ adjacent to $p_{2}$ are $e_{2} = \overline{p_{1}p_{2}}$ and $e_{3} = g(e_{2}) = \overline{p_{2}p_{3}}$. Since $e_{2}$ and $p_{2}$ are fixed, so is $e_{3}$. Thus $p_{3}$ is also fixed. Continuing inductively, we can fix the whole $m$-cycle. Since $m \geq 6$ we have fixed a subgraph isomorphic to $K_{3,3}$ that cannot be embedded in $S^{1}$. Thus if $m \geq 6 $ there exists an embedding $\Gamma'$ so that $\TSG(\Gamma') = D_{m}$. Moreover, since $e_1$ is only fixed pointwise by the identity, the Subgroup Corollary implies there is another embedding $\Gamma''$ such that $\TSG(\Gamma'') = \Z_m$.

We still need to consider when $m = 4$. Since $m = 4$ and $n \geq 3$, there must be at least one orbit of size $2m=8$. If this is the orbit of point $p$, we embed the vertices so that $\{p, h^2(p), \vf(p), \vf h^2(p)\}$ are vertices in $V$ and $\{h(p), h^3(p), \vf h(p), \vf h^3(p)\}$ are vertices in $W$.  So $(p\ h(p)\ h^2(p)\ h^3(p))$ is a 4-cycle which alternates between vertices in $V$ and $W$; using the same argument as above, any automorphism $\psi$ of $K_{n,n}$ which fixes $e_1 = \overline{p\, h(p)}$ pointwise and $\langle e_{1} \rangle_{G_2}$ setwise will fix the entire 4-cycle pointwise, and hence fix a subgraph isomorphic to $K_{2,2}$.  Now consider $e_2=\overline{p\, \vf h(p)}$, and suppose that $\psi$ also fixes $\langle e_{2}\rangle_{G_2}$ setwise. Notice that $\langle e_{2}\rangle_{G_2} =$ $ \{ \overline{p\, \vf h(p)}, $ $\overline{h(p)\, \vf(p)}, $ $\overline{h^2(p)\, \vf h^3(p)}, $ $\overline{h^3(p)\, \vf h^2(p)} \}$. Thus $e_{2}$ is the only edge in $\langle e_{2}\rangle _{G_2}$ that is adjacent to $p$. Since $p$ is fixed this implies that $e_{2}$ is fixed, and thus $\vf h(p)$ is also fixed. Thus we have fixed a subgraph $K_{3,2}$, induced by the vertices $\{p, h^2(p)\}$ in $V$ and $\{h(p), h^3(p), \vf h(p)\}$ in $W$. Since $K_{3,2}$ cannot be embedded in $S^{1}$ there exists an embedding $\Gamma'$ so that $\TSG(\Gamma') = D_{4}$.  As above, since $e_1$ is only fixed pointwise by the identity, the Subgroup Corollary implies there is another embedding $\Gamma''$ such that $\TSG(\Gamma'') = \Z_4$.
\end{proof}
\medskip

\begin{lemma} \label{L:n=m/2+2}
If $n \geq 3$, $4\vert m$,  $n\equiv 2 \pmod{\frac{m}{2}}$ and $H = \Z_m$ or $D_m$, then there exists an embedding, $\Gamma$, of $K_{n,n}$ in $S^{3}$ such that $\TSG(\Gamma)=H$.
\end{lemma}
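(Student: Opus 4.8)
The plan is to follow the template of Lemmas~\ref{L:n=0,1,2} and~\ref{L:n=m/2}: realize the automorphism of case~(9) of the Automorphism Theorem with a group of motions, extend to the edges via the Edge Embedding Lemma, and then trim the group with the Subgroup Lemma and Subgroup Corollary. The natural group here is $G_3$, whose generator $j$ is a glide rotation restricting to a rotation of order $4$ on the circle $Y$ and a rotation of order $m$ on $X$. Writing $n = \frac{m}{2}k + 2$, I would first place a single orbit of size $4$ at a point $q \in Y \cap Z$: since $\varphi$ fixes $q$ and $\langle j \rangle$ permutes $\{q, j(q), j^2(q), j^3(q)\}$ cyclically, the $G_3$-orbit of $q$ is exactly this $4$-element set. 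Assigning even powers of $j$ to $V$ and odd powers to $W$ turns it into a graph $4$-cycle alternating between the two sides, with $j(V) = W$. The remaining $mk$ vertices would be distributed in free orbits of size $2m$ coming from a ball on which $G_3$ acts freely, together with one additional orbit of size $m$ placed on $Z - (X \cup Y)$ exactly when $n \equiv 2 + \frac{m}{2} \pmod m$ (the $Z$-orbit contributing the extra $m$ vertices, as in Lemma~\ref{L:n=m/2}). Throughout I assign even $j$-powers to $V$, odd powers to $W$, and $\varphi$-images so that $\varphi(V) = V$; then $j$ induces the desired order-$m$ automorphism with a single $4$-cycle and all other cycles of length $m$.

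Verifying the Edge Embedding Lemma is where the real work lies. The crucial structural fact is that $4 \mid m$ forces $\frac{m}{2}$ to be even, so $j^i(q')$ and $j^{i+m/2}(q')$ lie on the same axis circle $\fix(\varphi j^{-2i})$ yet have the same parity and hence the same side; consequently no adjacent pair on the $Z$-orbit is pointwise fixed by a common element (the analogue of the $4 \mid m$ argument in Lemma~\ref{L:n=m/2}). The only adjacent pairs that are pointwise fixed are the edges of the $Y$-cycle, and only when $m \ge 8$, where $j^4$ is a nontrivial rotation with $\fix(j^4) = Y$; their four endpoints are equally spaced on $Y$, so conditions~(1)--(3) hold by cutting $Y$ into four symmetric arcs. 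When $m = 4$, $j$ is isoclinic with $j^2 = -\mathrm{id}$, and one checks that each $Y$-vertex is fixed only by a single involution whose axis meets no adjacent vertex, so conditions~(1)--(3) are vacuous. Finally, any element interchanging an adjacent pair must swap $V$ and $W$, and such an element fixes no vertex, so the fixed subgraph of condition~(4) is empty and condition~(5) only requires that the involutions $\varphi j^t$ have nonempty, pairwise-distinct axes. This produces an embedding $\Gamma$ with $D_m \subseteq \TSG(\Gamma)$.

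To reduce to $\TSG(\Gamma') = D_m$, I would apply the Subgroup Lemma to an $m$-cycle of $j$, which is present in every case (either inside a free $2m$-orbit or on the $Z$-orbit). Fixing an edge $e_1$ joining two consecutive vertices of such an $m$-cycle, together with the relevant edge orbits, propagates around the cycle and fixes a copy of $K_{m/2,m/2}$, which contains $K_{3,3}$ and hence does not embed in $S^1$ once $m \ge 8$; the remaining value $m = 4$ is handled as in Lemma~\ref{L:n=m/2}, by adjoining one more edge to a vertex of the $Y$-cycle so as to fix a $K_{3,2}$. Since $e_1$ is then fixed only by the identity, the Subgroup Corollary yields a final embedding with $\TSG = \Z_m$. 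I expect the main obstacle to be the careful bookkeeping for the $4$-cycle: confirming that the symmetric arcs on $Y$ satisfy conditions~(2) and~(3) simultaneously (in particular that no involution both leaves an arc's endpoints or midpoint invariant and fails to preserve that arc), separating out the genuinely different isoclinic behavior when $m = 4$, and checking that the global assignment of the $Y$- and $Z$-orbits to $V$ and $W$ is consistent with $j(V) = W$ and $\varphi(V) = V$.
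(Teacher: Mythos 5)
Your proposal is correct and follows essentially the same route as the paper: the group $G_3$, the $4$-cycle placed at $Y\cap Z$ and its $j$-images, the $m$-cycle on $Z$ (or in a free orbit), the same Edge Embedding Lemma verification, and the reduction via the Subgroup Lemma (fixing a $K_{m/2,m/2}$) followed by the Subgroup Corollary. The only difference is that the paper dispenses with the subcases $m=4$ and $n\equiv 2 \pmod{m}$ by noting they are already covered by Lemma~\ref{L:n=0,1,2}, whereas you handle them directly inside the same construction.
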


\begin{proof}
If $n \equiv 2 \pmod{m}$, then we are done by Lemma \ref{L:n=0,1,2}, so we may assume that $n = mr+\frac{m}{2}+2$ for some integer $r$, and hence $2n = 2mr +m+4$. We will use the group of motions $G_{3}$. Pick a small ball, $M$, such that for each non-trivial $h \in G_3$, $h(M) \cap M = \emptyset$ (i.e. $G_3$ acts freely on $M$).  Pick $r$ points $\{p_1, \dots, p_r\}$ in $M$; then each of these points has an orbit containing $2m$ points. Embed $2mr$ of the vertices of $V \cup W$ so that vertices of $V$ are embedded as the points $j^{2i}(p_k)$ and $\varphi j^{2i}(p_k)$, and vertices of $W$ are embedded as the points $j^{2i+1}(p_k)$ and $\varphi j^{2i+1}(p_k)$. Then $j(V) = W$ and $\varphi(V) = V$. Now we must embed the next $m$ vertices of $V \cup W$. Take a point $q$ on $Z - (X \cup Y)$. Then the orbit of $q$ under the action of $G_3$ contains $m$ points. Embed a vertex of $V$ as $q$, and embed the rest of the $m$ vertices as the points $j^{i}(q)$, alternating $v$'s and $w$'s. Observe that $j^{i}(q)$ and $j^{i+\frac{m}{2}}(q)$ are both on the axis for $\vf j^{-2i}$. Since $\frac{m}{2}$ is even the two vertices embedded on the axis of $\vf j^{-2i}$ are either both in $V$ or both in $W$. Lastly, we must embed the last four vertices of $V \cup W$.  Let $y$ be a point of intersection of the circle $Y$ and the circle $Z$ (the axis of $\vf$).  Embed the last four vertices as $y, j(y), j^2(y), j^3(y)$, placing vertices of $V$ at $y$ and $j^2(y)$ and vertices of $W$ at $j(y)$ and $j^3(y)$.  Since the action of $j$ on $Y$ has order 4, $j^2(y)$ is the other point of intersection of $Y$ and $Z$, and $\vf j(y) = j^3(y)$.

Now we must check that we can embed the edges of the graph. The only elements which fix a pair of adjacent vertices are $j^{4i}$ (which fix the last four vertices embedded), which all have the same fixed point set $Y$, and thus condition (1) of the Edge Embedding Lemma is satisfied. The pairs of adjacent vertices fixed by $j^{4i}$ all bound arcs on $Y$ that are disjoint from other vertices and arcs (since vertices of $V$ and $W$ alternate around $Y$), and thus condition (2) is satisfied. The elements $j^{4i}$ fix the circle $Y$ pointwise, and thus condition (3) is satisfied. The only elements of $G_{3}$ which interchange adjacent pairs are $\varphi j^{2i+1}$, which interchange the two pairs embedded on $Y$. $2mr$ vertices of $V\cup W$ are embedding in the orbit of $M$, which is disjoint from the axes of all the elements of $G_3$, another $m$ vertices are embedded as $j^{i}(q)$, which is on the axis for $\varphi j^{-2i}$, and the last 4 vertices (on $Y$) are embedded on the axes for $\vf$ or $\vf j^2$.  So no vertices are embedded on the axes for $\vf j^{2i+1}$.  Thus these elements of $G_3$ fix no vertices and condition (4) is satisfied. Again, only a $\varphi j^{i}$ can possibly interchange a pair of vertices, and each $\varphi j^{i}$ has a fixed point set that is homeomorphic to $S^{1}$ and is unique, so condition (5) is satisfied. Thus we are able to embed the edges of $K_{n,n}$ in $S^{3}$ such that the resulting embedding is setwise invariant under $G_3$.

Now we will apply the Subgroup Lemma to show that we can modify the embedding so that $\TSG = D_m$. If $m = 4$, then $n = 4r + 2 + 2 = 4(r+1)$, so $n \equiv 0 \pmod{m}$.  This case has already been dealt with in Lemma \ref{L:n=0,1,2}, so we may assume $m \geq 8$.  We will have an $m$-cycle under the action of $G_3$, and exactly as in Lemma \ref{L:n=m/2} we can show that if an automorphism of $K_{n,n}$ fixes an edge of this $m$-cycle pointwise, and the orbit of that edge setwise, then it must fix the entire cycle, which contains a subgraph isomorphic to $K_{4,4}$.  Since $K_{4,4}$ cannot be embedded in $S^{1}$, there exists an embedding $\Gamma'$ so that $\TSG(\Gamma') = D_{m}$.  Moreover, also as in Lemma \ref{L:n=m/2}, there is an edge which is not fixed by any nontrivial element of the topological symmetry group, so the Subgroup Corollary implies there is another embedding $\Gamma''$ such that $\TSG(\Gamma'') = \Z_m$.
\end{proof}
\medskip

Combining Lemmas \ref{L:cyclic1}, \ref{L:n=0,1,2}, \ref{L:n=m/2} and \ref{L:n=m/2+2} gives us Theorem \ref{T:cyclic}.

\begin{them1}
Let $n>2$ and let $K_{n,n}$ be the complete bipartite graph on $n,n$ vertices. Then there exists an embedding, $\Gamma$, of $K_{n,n}$ in $S^{3}$ such that $\TSG(\Gamma)=H$ for $H=\mathbb{Z}_{m}$ or  $D_{m}$ if and only if one of the following conditions hold:
	\begin{enumerate}
	\item $n\equiv0,1,2 \pmod{m}$,
	\item $n\equiv0 \pmod{\frac{m}{2}}$ if $m$ is even,
	\item $n\equiv2 \pmod{\frac{m}{2}}$ if $m$ is even and $4|m$.
\end{enumerate}
\end{them1}

\section{Necessity of conditions for $\TSG(\Gamma) = \mathbb{Z}_{r}\times\mathbb{Z}_{s}$ or $(\Z_r \x \Z_s) \ltimes \Z_2$} \label{S:necessity}

In this section we will prove the necessity of the conditions in Theorem \ref{T:product}. Recall that \begin{itemize}
	\item $\Z_r \times \Z_s = \langle g, h \vert g^r = h^s = 1, gh = hg \rangle$
	\item $(\Z_r \times \Z_s) \ltimes \Z_2 = \langle g, h, \varphi \vert g^r = h^s = \varphi^2 = 1, gh = hg, \varphi g = g^{-1} \varphi, \varphi h = h^{-1} \varphi \rangle$
\end{itemize}

Since $\mathbb{Z}_{r}$ and $\mathbb{Z}_{s}$ are both subgroups of $\mathbb{Z}_{r}\times\mathbb{Z}_{s}$ then the conditions of Lemma \ref{L:cyclic1} must hold for both $r$ and $s$.  We are assuming that $r \vert s$, so the conditions for $s$ are strictly stronger.  So if $K_{n,n}$ has an embedding with topological symmetry group $\mathbb{Z}_{r}\times\mathbb{Z}_{s}$ or $(\Z_r \x \Z_s) \ltimes \Z_2$, then one of the following holds:
\begin{enumerate}
\item $n \equiv 0,1,2 \pmod {s}$
\item $n \equiv 0 \pmod {\frac{s}{2}}$ for $s$ even
\item $n \equiv 2 \pmod {\frac{s}{2}}$ for $s$ even and $4\vert s$
\end{enumerate}
 
However, these conditions are not all sufficient.  In Lemmas \ref{L:1mods} through \ref{L:s/2+2mods}, we will prove that in some of these cases it is {\em not} possible to embed $K_{n,n}$ such that its topological symmetry group contains $\mathbb{Z}_{r}\times\mathbb{Z}_{s}$.  Since $\mathbb{Z}_{1}\times\mathbb{Z}_{s}=\mathbb{Z}_{s}$ and $\mathbb{Z}_{2}\times\mathbb{Z}_{2}=D_{2}$ and we have already considered cyclic and dihedral groups, we shall assume $r\geq 2$ and $s \geq 3$.

\begin{lemma} \label{L:1mods}
If $n\equiv 1 \pmod {s}$ there does not exist an embedding $\Gamma$ of $K_{n,n}$ such that $\mathbb{Z}_{r}\times\mathbb{Z}_{s}\subseteq \TSG(\Gamma)$.
\end{lemma}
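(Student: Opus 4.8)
The plan is to argue by contradiction. Suppose $\Gamma$ is an embedding of $K_{n,n}$ with $\Z_r \times \Z_s \subseteq \TSG(\Gamma)$ and $n \equiv 1 \pmod s$, where (as assumed in this section) $r \geq 2$, $s \geq 3$ and $r \mid s$. By the Isometry Theorem I may assume $\Z_r \times \Z_s = \langle g, h\rangle$ acts on $(S^3,\Gamma)$ by orientation-preserving isometries. Since $s \geq 3$ we have $\lcm(r,s) = s > 2$, so Corollary \ref{C:commute} supplies a pair of completely orthogonal planes $A,B$, meeting $S^3$ in geodesic circles $X,Y$, on each of which both $g$ and $h$ act as orientation-preserving rotations.

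First I would pin down the cycle structure of the order-$s$ automorphism $h$. Reversing the case analysis in the proof of Lemma \ref{L:cyclic1}, the congruence $n \equiv 1 \pmod s$ with $s \geq 3$ is compatible with only one of the nine cases of the Automorphism Theorem: the other cases force $n \equiv 0 \pmod{s/2}$ or $n \equiv 2 \pmod{s/2}$, so none of them gives $n \equiv 1 \pmod s$. Thus $h$ must be in case (3), and matching remainders forces \emph{exactly} one fixed vertex $v^* \in V$ and one fixed vertex $w^* \in W$, with all other vertices in $s$-cycles; in particular $h$ fixes $V$ and $W$ setwise. Since $\fix(h) \neq \emptyset$, the matrix form from Corollary \ref{C:commute} (together with Smith Theory) forces $\fix(h)$ to be one of the axes, say $X$. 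Hence $v^*$ and $w^*$ lie on $X$, and they are the \emph{only} vertices of $\Gamma$ on $X$, since every vertex on $X = \fix(h)$ is fixed by $h$.

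Next I would bring in $g$. As $g$ commutes with $h$, the Orbits Lemma shows $g$ permutes the length-one $h$-orbits, i.e. $g$ permutes $\{v^*, w^*\}$; and by Fact \ref{auts}, $g$ either fixes each of $V,W$ setwise or interchanges them. If $g$ fixes $V$ and $W$, then $g(v^*) = v^*$ (the unique $h$-fixed vertex of $V$), so $v^* \in \fix(g) \cap \fix(h)$. But $\Z_r \times \Z_s$ is neither cyclic (since $r \mid s$ and $r \geq 2$ give $\gcd(r,s) \geq 2$) nor $D_2$ (since $s \geq 3$), so the Disjoint Fixed Points Lemma gives $\fix(g) \cap \fix(h) = \emptyset$, a contradiction.

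The remaining case, where $g$ interchanges $V$ and $W$ so that $g(v^*) = w^*$ and $g(w^*) = v^*$, is the one I expect to be the crux, and here the obstruction is topological rather than group-theoretic. Since $g$ acts on $X$ as an orientation-preserving rotation that swaps $v^*$ and $w^*$, it must be the rotation by $\pi$; thus $v^*$ and $w^*$ are antipodal on $X$ and $g$ interchanges the two arcs of $X$ joining them. Now consider the embedded edge $\overline{v^*w^*}$. Because $h$ fixes both endpoints and has finite order, its restriction to this arc is an orientation-preserving finite-order self-homeomorphism fixing both ends, hence the identity; therefore $\overline{v^*w^*} \subseteq \fix(h) = X$. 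But as an automorphism $g$ only swaps the endpoints of $\overline{v^*w^*}$, so it sends this edge to itself, while $g|_X$ carries any arc of $X$ between $v^*$ and $w^*$ to the complementary arc. This is impossible, and the contradiction completes the proof. The delicate point to get right is exactly this last step: one must rule out the geometrically consistent-looking configuration with $g$ swapping the two parts, and the only way I see to do it is through the edge-embedding obstruction above rather than through fixed-point counting.
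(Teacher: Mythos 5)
Your proof is correct, and its skeleton matches the paper's: rule out $h(V)=W$ by counting, pin down that the order-$s$ generator fixes exactly one vertex in each of $V$ and $W$, use the Orbits Lemma to see how the other generator acts on those two vertices, and split into the two cases of Fact 1. Where you diverge is in the machinery and in the final case. The paper never invokes the Isometry Theorem or Corollary \ref{C:commute} here; it stays at the level of finite-order diffeomorphisms. In your last case it argues as follows: $\alpha$ (your $g$) setwise fixes the embedded edge $\overline{v^*w^*}$ while swapping its endpoints, hence (being orientation-reversing on that arc) fixes a point in its interior; since $\beta$ (your $h$) fixes both endpoints and has finite order, it fixes the edge pointwise, so that interior point lies in $\fix(\alpha)\cap\fix(\beta)$, contradicting the Disjoint Fixed Points Lemma. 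So the "fixed-point counting" route you doubted does work, and it is lighter than your argument, which needs the isometric normal form to identify $g|_X$ as a $\pi$-rotation swapping the two arcs of $X$. Your version buys a very concrete geometric picture (and is a correct alternative), but at the cost of first re-embedding via the Isometry Theorem and appealing to Corollary \ref{C:commute}; the paper's version needs only that a finite-order homeomorphism of an arc fixing both endpoints is the identity and that one reversing them has an interior fixed point. One small inaccuracy worth noting: your summary that the other Automorphism Theorem cases "force $n\equiv 0$ or $2\pmod{s/2}$" is not quite right (cases (3), (7), (8) give $n\equiv 1$ or $2\pmod{s}$), though your conclusion -- that only case (3) with exactly one fixed vertex in each of $V$ and $W$ survives -- is correct.
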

\begin{proof}
Assume there is such an embedding $\Gamma$. Let $\alpha$ and $\beta$ be automorphisms in $\TSG(\Gamma)$ such that $\langle\alpha,\beta \rangle\cong\mathbb{Z}_{r}\times\mathbb{Z}_{s}$. Suppose $\b$ interchanges the vertex sets $V$ and $W$. By the Automorphism Theorem, this means $V\cup W$ is either partitioned into $s$-cycles of $\b$, or $s$-cycles and a single 4-cycle.  But since $n \equiv 1 \pmod{s}$, $2n \equiv 2 \pmod{s}$ (with $s \geq 3$), so this is not possible.  Hence $\b$ fixes $V$ and $W$ setwise.

Consider the case when $\alpha$ also fixes $V$ setwise. By the Automorphism Theorem, $\alpha$ must fix one vertex, $v$, of $V$ and $V-\fix(\alpha)$ is partitioned into $r$-cycles. Then by the Orbits Lemma, $\beta(v)=\{v\}$ since $\{v\}$ is the only $\alpha$-orbit of length one in $V$. So $v$ is a fixed vertex of $\beta$. This contradicts the Disjoint Fixed Points Lemma. So $\a$ must interchange $V$ and $W$.

By the Automorphism Theorem we know that $\beta$ must fix one vertex of each of $V$ and $W$. Call these vertices $v,w$. By the Orbits Lemma, $\alpha(v)=w$ and $\alpha(w)=v$, so $\alpha$ fixes the embedded edge $\overline{vw}$ in $\Gamma$ setwise, and hence fixes some point in its interior. Since $\beta$ fixes the edge pointwise, $\fix(\alpha)\cap\fix(\beta) \ne \emptyset$. This contradicts the Disjoint Fixed Points Lemma. Thus if $n\equiv 1 \pmod{s}$, there does not exist an embedding, $\Gamma$, of $K_{n,n}$ such that $\TSG(\Gamma) = \mathbb{Z}_{r}\times\mathbb{Z}_{s}$.
\end{proof}\\

\begin{lemma} \label{L:2mods1}
If $n\equiv 2 \pmod {s}$ and $s$ is odd, there does not exist an embedding $\Gamma$ of $K_{n,n}$ such that $\mathbb{Z}_{r}\times\mathbb{Z}_{s} \subseteq \TSG(\Gamma)$.
\end{lemma}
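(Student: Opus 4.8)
The plan is to assume such an embedding $\Gamma$ exists and, after invoking the Isometry Theorem to treat the generators of $\mathbb{Z}_{r}\times\mathbb{Z}_{s}$ as commuting isometries of $S^{3}$, count the vertices of $V$ by their orbits and force a contradiction modulo $s$. First I would record that $r\mid s$ makes $r$ odd, and that any automorphism interchanging $V$ and $W$ has even order (it acts as a transposition on $\{V,W\}$, so no odd power can be trivial). Hence both generators $\alpha$ (order $r$) and $\beta$ (order $s$) fix $V$ and $W$ setwise, and $G=\langle\alpha,\beta\rangle$ preserves $V$. Since $\lcm(r,s)=s>2$, Corollary \ref{C:commute} supplies orthogonal planes $A,B$, with geodesic circles $X,Y$, invariant under all of $G$, on which $\alpha$ acts by rotations of orders $a,b$ (with $\lcm(a,b)=r$) and $\beta$ by rotations of orders $c,d$ (with $\lcm(c,d)=s$). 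Every element of $G$ is then block-diagonal with respect to $A\oplus B$, and a nontrivial such element fixes a point off $X\cup Y$ only if it is trivial on both planes, i.e.\ only if it is the identity; so $G$ acts freely on $S^{3}\setminus(X\cup Y)$.

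Next I would count orbits. The $G$-orbits in $V$ lying off $X\cup Y$ have size $|G|=rs$; every point of $X$ has the same (pointwise) stabilizer $G_{X}$, so orbits in $V\cap X$ have size $m_{A}:=|G|/|G_{X}|=\lcm(a,c)$, and similarly orbits in $V\cap Y$ have size $m_{B}:=\lcm(b,d)$. Reducing $|V|=n\equiv 2\pmod{s}$ modulo $s$ and using $rs\equiv 0$, I obtain $\lcm(a,c)\,k_{1}+\lcm(b,d)\,k_{2}\equiv 2\pmod{s}$ for some nonnegative integers $k_{1},k_{2}$ (the numbers of orbits on $X$ and $Y$).

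Writing $P=\lcm(a,c)$ and $Q=\lcm(b,d)$, both divisors of $s$, this congruence forces $\gcd(P,Q)\mid 2$. Since $s$ is odd, $\gcd(P,Q)$ is odd, so $\gcd(P,Q)=1$; as $P,Q$ are coprime divisors of $s$ this gives $PQ\mid s$, hence $PQ\le s$. The contradiction now comes from faithfulness: the two rotation-angle maps assemble into an injection $G\hookrightarrow \mathbb{Z}_{m_{A}}\times\mathbb{Z}_{m_{B}}$, whose kernel consists of elements trivial on both $A$ and $B$ and is therefore trivial, so $rs=|G|\le m_{A}m_{B}=PQ\le s$. This yields $r\le 1$, contradicting $r\ge 2$.

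I expect the main obstacle to be the bookkeeping in the orbit count: verifying that every point of $X$ shares the stabilizer $G_{X}$ (so that the orbit sizes on the two circles are exactly $m_{A}$ and $m_{B}$), that the action off $X\cup Y$ is genuinely free, and that $m_{A}=\lcm(a,c)$ and $m_{B}=\lcm(b,d)$. The hypothesis that $s$ is odd enters at exactly one point, upgrading $\gcd(P,Q)\mid 2$ to $\gcd(P,Q)=1$; this is what makes the faithfulness inequality $rs\le PQ$ collide with the divisibility bound $PQ\le s$. (As a sanity check, the same argument with $2$ replaced by $1$ reproves Lemma \ref{L:1mods} for every $s$, since there $\gcd(P,Q)\mid 1$ forces $\gcd(P,Q)=1$ regardless of parity.)
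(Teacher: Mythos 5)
Your proof is correct, but it takes a genuinely different route from the paper's. The paper argues combinatorially: since $r$ and $s$ are odd, $\alpha$ and $\beta$ preserve $V$ setwise; the Automorphism Theorem together with $n\equiv 2\pmod r$ and $r\geq 3$ forces $\alpha$ to fix exactly two vertices $v_1,v_2$ of $V$ (a $2$-cycle being impossible for odd order); the Orbits Lemma makes $\beta$ permute $\{v_1,v_2\}$; and the Disjoint Fixed Points Lemma forbids $\beta$ from fixing either one, so $\beta$ has a $2$-cycle, contradicting $s$ odd. You instead pass to isometries via the Isometry Theorem and Corollary~\ref{C:commute} and run an orbit-size congruence: orbits off $X\cup Y$ have size $rs$, orbits on $X$ and $Y$ have sizes $P=\lcm(a,c)$ and $Q=\lcm(b,d)$, so $Pk_1+Qk_2\equiv 2\pmod s$ forces $\gcd(P,Q)\mid 2$, hence $\gcd(P,Q)=1$ since $s$ is odd, hence $PQ\mid s$; faithfulness of the block-diagonal representation gives $rs=|G|\leq PQ\leq s$, i.e.\ $r\leq 1$, a contradiction. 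The supporting steps all check out: every point of $X$ has the same stabilizer (the kernel of $G\to \so(2)$ acting on $A$), nontrivial block-diagonal rotations fix points only on $X\cup Y$, and an element trivial on both $A$ and $B$ is the identity, so $G$ injects into $\Z_{m_A}\times\Z_{m_B}$. The paper's argument is shorter and uses only combinatorial black boxes; yours avoids the Automorphism Theorem, Orbits Lemma, and Disjoint Fixed Points Lemma entirely and is closer in spirit to the paper's later Lemmas~\ref{L:s/2mods} and~\ref{L:s/2+2mods}. One small caveat: your parenthetical claim that the same computation reproves Lemma~\ref{L:1mods} ``for every $s$'' only covers the case where both generators preserve $V$; when $r$ is even, $\alpha$ may interchange $V$ and $W$, and then one must count $V\cup W$, yielding residue $2$ rather than $1$ and losing the forced coprimality. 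This does not affect the lemma at hand, where $s$ odd and $r\mid s$ force $r$ to be odd.
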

\begin{proof}
Assume there is such an embedding $\Gamma$. Let $\alpha$ and $\beta$ be automorphisms in $\TSG(\Gamma)$ such that $\langle\alpha,\beta \rangle\cong\mathbb{Z}_{r}\times\mathbb{Z}_{s}$. Since $r\vert s$, $r$ is also odd. Since their orders are odd, $\alpha$ and $\beta$ both fix $V$ setwise. So by the Automorphism Theorem we know that  $\alpha$ either fixes two vertices of $V$ or has a 2-cycle in $V$. But $\a$ cannot have any 2-cycles, since it has odd order.  Therefore $\alpha$ fixes two vertices, $v_{1}$ and $v_{2}$, and $V-\fix(\a)$ is partitioned into $r$-cycles. By the Orbits Lemma, $\beta(\{v_{1},v_{2}\})=\{v_{1},v_{2}\}$. But, by the Disjoint Fixed Points Lemma, $\beta$ cannot fix either $v_{1}$ or $v_{2}$. So $\beta$ has a cycle of length 2. This is impossible since $s$ is odd. Thus there does not exist an embedding $\Gamma$ of $K_{n,n}$ such that $\mathbb{Z}_{r}\times\mathbb{Z}_{s} \subseteq \TSG(\Gamma)$.
\end{proof}\\

\begin{lemma} \label{L:2mods2}
If $n\equiv 2\pmod{s}$ and there exists an embedding $\Gamma$ of $K_{n,n}$ such that $\mathbb{Z}_{r}\times\mathbb{Z}_{s} \subseteq \TSG(\Gamma)$, then $r=2$ or $r=4$.
\end{lemma}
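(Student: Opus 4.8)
The plan is to reduce everything to the action of $\alpha$ on a four-element set of vertices singled out by $\beta$, and then combine the Orbits Lemma with the Disjoint Fixed Points Lemma to bound the sizes of $\alpha$'s orbits on that set. First I would record two reductions. Since $n \equiv 2 \pmod{s}$, Lemma \ref{L:2mods1} shows no such embedding exists when $s$ is odd, so the hypothesis forces $s$ to be even. And since $r \mid s \mid (n-2)$ we also have $n \equiv 2 \pmod{r}$. Let $\alpha$ (order $r$) and $\beta$ (order $s$) generate $\Z_r \x \Z_s \subseteq \TSG(\Gamma)$, realized by commuting finite-order diffeomorphisms of $(S^3,\Gamma)$. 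Because $r \mid s$ with $r \ge 2$ and $s \ge 3$, in each application below the Disjoint Fixed Points Lemma will apply to a pair $\alpha^i, \beta^j$ as soon as we check that $\langle \alpha^i,\beta^j\rangle$ is noncyclic and not $D_2$.

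Next I would use the Automorphism Theorem to determine the cycle structure of $\beta$. With $n \equiv 2 \pmod{s}$ and $s$ even, $\beta$ either interchanges $V$ and $W$---case (9), forcing $4 \mid s$ and making $V \cup W$ a union of $s$-cycles and a single $4$-cycle---or fixes $V$ and $W$ setwise, in which case (cases (3), (7), (8)) it has either two fixed vertices in each of $V$ and $W$, or a single $2$-cycle in each of $V$ and $W$. In all cases there is a unique exceptional orbit length $\ell \in \{1,2,4\}$, and I let $P$ be the union of the $\beta$-orbits of length $\ell$; this $P$ consists of exactly four vertices. (The finitely many degenerate small values, such as $s=4$ in case (9), make $r \mid s \mid 4$ directly, so I set them aside.) Every vertex of $P$ lies in $\fix(\beta^{\ell})$, and $\beta^{\ell}$ has order $s/\ell \ge 2$.

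The core of the proof is the action of $\alpha$ on $P$. By the Orbits Lemma $\alpha$ permutes the $\beta$-orbits of length $\ell$, hence permutes the four vertices of $P$. I would show that every $\alpha$-orbit in $P$ has size exactly $r$; since $|P| = 4$ this gives $r \mid 4$ and therefore $r \in \{2,4\}$. It is enough to rule out that some nontrivial $\alpha^i$ fixes a vertex $v \in P$, for then $v \in \fix(\alpha^i) \cap \fix(\beta^{\ell})$ and, whenever $\langle \alpha^i, \beta^{\ell}\rangle \cong \Z_{\mathrm{ord}(\alpha^i)} \x \Z_{s/\ell}$ is noncyclic and not $D_2$, the Disjoint Fixed Points Lemma is contradicted. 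Since $\ell$ is a power of $2$ and $r \mid s$, every odd prime dividing $r$ also divides $s/\ell$. Hence if $\rho$ denotes the odd-order part of $\alpha$, every nontrivial power of $\rho$ has order sharing a common prime factor with $s/\ell$, so by the Disjoint Fixed Points Lemma no such power fixes a vertex of $P$; thus $\rho$ acts on $P$ with orbits of equal size $\mathrm{ord}(\rho)$, which must divide $|P| = 4$. As $\mathrm{ord}(\rho)$ is odd, $\rho$ is trivial, so $r$ is a power of $2$.

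The main obstacle is bounding the $2$-part of $r$, i.e.\ excluding $r \ge 8$. When $2 \mid s/\ell$ the same disjointness argument applies verbatim to the nontrivial $2$-power elements of $\alpha$ and gives $r \mid 4$; when $2 \nmid s/\ell$---that is, when the full power of $2$ dividing $s$ already divides $\ell$---the divisibility $r \mid s$ together with $r$ being a power of $2$ forces $r \mid \ell \mid 4$. The delicate borderline is $s/\ell = 2$ (for instance $s = 8$, $\ell = 4$): here, if $\alpha$ acts on $P$ as a $4$-cycle, the subgroup $\langle \alpha^{4}, \beta^{\ell}\rangle$ collapses to $D_2$ and the Disjoint Fixed Points Lemma no longer applies. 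I expect to resolve this by noting that $\alpha|_P$ then lies in the centralizer of the $4$-cycle $\beta|_P$ inside the symmetric group on $P$, so $\alpha|_P = (\beta|_P)^{\pm 1}$; then $\delta = \alpha \beta^{\mp 1}$ fixes $P$ pointwise while $\langle \delta, \beta^{\ell}\rangle$ is genuinely noncyclic and not $D_2$, producing the needed contradiction. This centralizer argument with the auxiliary element $\delta$ is the step I anticipate being the most technical.
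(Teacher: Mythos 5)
Your proposal is correct, and it rests on the same three external inputs as the paper's proof (the Automorphism Theorem to isolate the four exceptional vertices of $\beta$, the Orbits Lemma to let $\alpha$ act on them, and the Disjoint Fixed Points Lemma for the contradiction), but it is organized along a genuinely different axis. The paper assumes $r \neq 2,4$ and splits into four cases according to whether each of $\alpha$ and $\beta$ preserves or exchanges $V$ and $W$, exhibiting in each case one explicit pair of powers ($\alpha^2$ or $\alpha^4$ against $\beta^2$ or $\beta^4$) with a common fixed vertex. You instead run a uniform orbit count on the four-element set $P$, showing the stabilizers in $\langle\alpha\rangle$ of points of $P$ are essentially trivial so that $r$ divides $|P|=4$, with the case division governed by the arithmetic of $s/\ell$ and the prime factorization of $r$ rather than by the bipartition behavior. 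What your route buys is precisely the careful treatment of the borderline $s/\ell=2$ (which, after discarding the degenerate $s=4$, means $s=8$, $\ell=4$): there the naive application of the Disjoint Fixed Points Lemma to the order-two elements produces $D_2$ and fails, and your centralizer argument with $\delta=\alpha\beta^{\mp1}$, which fixes $P$ pointwise while $\langle\delta,\beta^{4}\rangle\cong\Z_8\times\Z_2$ is neither cyclic nor $D_2$, closes it. This is not wasted effort: in the paper's fourth case ($\alpha(V)=W=\beta(V)$) the lemma is applied to $\alpha^4$ and $\beta^4$, and when $r=s=8$ that pair generates exactly $D_2$, so the published argument is silently incomplete at the very configuration your $\delta$-trick handles. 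The only polish I would ask for is to state explicitly that in the borderline case the elements of $\langle\alpha\rangle$ of order at least $4$ still satisfy the hypotheses of the Disjoint Fixed Points Lemma, so that for $r=8$ every $\alpha$-orbit on $P$ has size at least $4$ and the $4$-cycle configuration you treat is genuinely the only one remaining.
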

\begin{proof}
Assume there is such an embedding $\Gamma$, and $r \neq 2, 4$.  By Lemma \ref{L:2mods1}, $s$ must be even.  Let $\alpha$ and $\beta$ be automorphisms in $\TSG(\Gamma)$ such that  $\langle\alpha,\beta \rangle = \mathbb{Z}_{r}\times\mathbb{Z}_{s}$. We consider four cases, depending on whether $\a$ and $\b$ fix or interchange the vertex sets $V$ and $W$.  Observe there is an integer $k$ such that $n = ks + 2$, and $2n = 2ks + 4$.  Also observe that, if $a < r$ and $b < s$, then $\langle \a^a, \b^b\rangle$ is a product of cyclic groups which is not itself cyclic.

We first suppose that $\a(V) = V$ and $\b(V) = V$.  By the Automorphism Theorem, $\b$ either fixes two vertices in each of $V$ and $W$, or has a 2-cycle in each of $V$ and $W$.  So $\b^2$ fixes vertices $v_1, v_2$ in $V$ and $w_1, w_2$ in $W$. By the Orbits Lemma, $\a(\{v_1, v_2\}) = \{v_1, v_2\}$, so $\a^2$ must also fix $v_1$ and $v_2$.  But, since $r \neq 2$, $\langle \a^2, \b^2\rangle$ is a product of cyclic groups, so this contradicts the Disjoint Fixed Points Lemma.

Next we suppose that $\a(V) = V$ and $\b(V) = W$.  Since $s$ is even, $r \neq 2, 4$ and $r \vert s$, $s > 4$.  So by the Automorphism Theorem $\b$ partitions $V \cup W$ into $s$-cycles, along with a single 4-cycle.  Hence $\b^4$ fixes vertices $v_1, v_2$ in $V$ and $w_1, w_2$ in $W$. As in the last case, $\a^2$ must also fix $v_1$ and $v_2$, which contradicts the Disjoint Fixed Points Lemma.

Now suppose that $\a(V) = W$ and $\b(V) = V$.  Since $r \neq 2$ or $4$, $\a$ partitions $V \cup W$ into $r$-cycles, along with a single 4-cycle.  Now we simply repeat the argument in the previous paragraph, reversing the roles of $\a$ and $\b$.

Finally, suppose that $\a(V) = W$ and $\b(V) = W$.  As before, $\b^4$ fixes vertices $v_1, v_2$ in $V$ and $w_1, w_2$ in $W$.  Now $\a^2$ fixes $V$ setwise, so, as in the previous cases, $\a^4$ must fix the points $v_1$ and $v_2$.  Since $r \neq 2$ or $4$, and $r$ must be even, this contradicts the Disjoint Fixed Points Lemma.

Therefore, if there is such an embedding $\Gamma$, then $r = 2$ or $4$.
\end{proof}\\

\begin{lemma} \label{L:s+2mod2sr=2}
If $n\equiv s+2 \pmod{2s}$, $r=2$ and $\frac{s}{2}$ is odd, then there does not exist an embedding $\Gamma$ of $K_{n,n}$ such that $\mathbb{Z}_{r}\times\mathbb{Z}_{s} \subseteq \TSG(\Gamma)$.
\end{lemma}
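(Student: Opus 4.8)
Here is how I would approach Lemma \ref{L:s+2mod2sr=2}.

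\smallskip
\noindent\emph{Setup and first reduction.} Suppose toward a contradiction that $\Gamma$ is such an embedding, and choose $\alpha,\beta\in\TSG(\Gamma)$ with $\alpha$ of order $2$, $\beta$ of order $s$, and $\langle\alpha,\beta\rangle\cong\Z_2\x\Z_s$. Since $\frac{s}{2}$ is odd and $s\ge 3$ we have $s\ge 6$, and $n\equiv s+2\pmod{2s}$ gives $n\equiv 2\pmod{s}$, $n\equiv 2\pmod{\frac{s}{2}}$ (so $\frac{s}{2}\nmid n$), and $2n\equiv 4\pmod{2s}$. The first step is to show $\beta(V)=V$: if instead $\beta$ interchanged $V$ and $W$, then by the Automorphism Theorem $V\cup W$ would decompose into $s$-cycles alone (forcing $\frac{s}{2}\mid n$) or into $s$-cycles and a single $4$-cycle (case (9), forcing $4\mid s$). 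Both contradict $\frac{s}{2}$ odd together with the congruence, so $\beta$ fixes $V$ and $W$ setwise.

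\smallskip
\noindent\emph{Geometric normalization.} By the Isometry Theorem I realize $\langle\alpha,\beta\rangle$ by isometries, and by Corollary \ref{C:commute} I obtain completely orthogonal invariant circles $X$ and $Y$ with $\alpha$ a combination of rotations by $\frac{2\pi}{a}$, $\frac{2\pi}{b}$ and $\beta$ a combination of rotations by $\frac{2\pi}{c}$, $\frac{2\pi}{d}$, where $\lcm(a,b)=2$ and $\lcm(c,d)=s$. The three involutions $\alpha$, $\tau=\beta^{s/2}$, $\alpha\tau$ act on each of $X,Y$ by the identity or by a half-turn, and since their ``half-turn patterns'' are the three nonzero elements of $(\Z_2)^2$, their fixed-point sets are exactly $X$, $Y$, and $\emptyset$ (the central inversion) in some order. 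A point off $X\cup Y$ has trivial stabilizer, so off-axis orbits have size $2s$; hence the off-axis vertices number $\equiv 0\pmod{2s}$ and, writing $N_X,N_Y$ for the numbers of vertices on $X,Y$, we get $N_X+N_Y\equiv 2n\equiv 4\pmod{2s}$.

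\smallskip
\noindent\emph{The case where $\alpha$ interchanges $V$ and $W$.} Here $\tau\in\langle\beta\rangle$ preserves the parts, while $\alpha$ and $\alpha\tau$ swap them and hence fix \emph{no} vertex. If $\tau$ is the central inversion, both the $X$- and $Y$-involutions swap colors, forcing $N_X=N_Y=0$; then every vertex is off-axis and $s\mid n$, a contradiction. Otherwise say $\fix(\tau)=X$; the $Y$-involution swaps colors so $N_Y=0$, and since $\alpha$ maps $V\cap X$ bijectively onto $W\cap X$, both parts meet $X$ and the Automorphism Theorem (case (3)) gives $N_X\le 4$, whence $N_X=4$ with two vertices of each part. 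Because $\frac{s}{2}$ is odd, $\fix(\tau)=X$ forces $c$ odd, and $N_X=4$ forces the orbit size $\lcm(a,c)\mid 4$, so $c=1$; thus $\fix(\beta)=X$ and $\beta$ fixes all four vertices. For an edge $\overline{vw}$ with $v\in V\cap X$ and $w=\alpha(v)\in W\cap X$, $\beta$ fixes its endpoints, hence fixes it setwise, hence (a finite-order homeomorphism of an arc fixing its ends is the identity) fixes it pointwise, so $\overline{vw}\subseteq\fix(\beta)$. But $\alpha$ reverses $\overline{vw}$ and therefore fixes an interior point, which lies in $\fix(\alpha)\cap\fix(\beta)=\emptyset$, contradicting the Disjoint Fixed Points Lemma.

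\smallskip
\noindent\emph{The case where $\alpha$ preserves $V$ and $W$, and the main obstacle.} Now all of $\langle\alpha,\beta\rangle$ preserves the parts, so free orbits are monochromatic and $|V\cap(X\cup Y)|\equiv n\equiv s+2\pmod{2s}$. If both axis-involutions fix at most four vertices, then $|V\cap(X\cup Y)|\le N_X+N_Y=4<s+2$, a contradiction. Otherwise some axis circle carries more than four vertices; since its (color-preserving) involution fixes all of them, the presence of both parts would force at most two each (case (3)), so that circle is monochromatic, say all of $V$ on $X$. Then $W\cap X=\emptyset$, and the congruence forces all of $W$ onto $Y$; computing $|V\cap X|=c\,p$ and $|W\cap Y|=d\,q$ modulo $s$ yields $c\mid 2$ and $d\mid 2$, incompatible with $\lcm(c,d)=s\ge 6$. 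This last, preserve-case analysis is the delicate part: one must apply the ``$\le 4$'' bound only when both parts appear on a circle (since the Automorphism Theorem otherwise allows a single part to contribute arbitrarily many fixed vertices, case (2)), and then eliminate the ``one part per axis'' configuration through the divisibility $\lcm(c,d)=s$.
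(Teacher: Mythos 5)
Your proof is correct, and its overall architecture is the same as the paper's: rule out $\beta(V)=W$ via the Automorphism Theorem, use Corollary \ref{C:commute} to get the invariant circles $X,Y$ and identify the three involutions $\alpha,\beta^{s/2},\alpha\beta^{s/2}$ with the rotation about $X$, the rotation about $Y$, and the central inversion, then split on whether $\alpha$ preserves or interchanges the parts. In the part-preserving case the two arguments are essentially the same: your divisibility count forcing $\mathrm{ord}(\beta|_X)\mid 2$ and $\mathrm{ord}(\beta|_Y)\mid 2$ is the paper's observation that $\beta^2$ fixes vertices on both circles and hence is the identity. The genuine divergence is in the interchanging case. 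The paper first renormalizes generators so that $\alpha$ is the rotation about $X$ and $\beta^{s/2}$ the rotation about $Y$, places the four on-axis vertices on $Y$, and derives a contradiction from the geometry: the two $\alpha$-invariant edges would have to be overlapping semicircles of $Y$. You keep the original generators (handling the central-inversion subcase separately), use $N_X=4$ together with $\frac{s}{2}$ odd to force $\beta$ to act trivially on $X$, and then exhibit a point of $\fix(\alpha)\cap\fix(\beta)$ on an $\alpha$-reversed edge, contradicting the Disjoint Fixed Points Lemma. Your ending replaces the paper's semicircle-intersection picture with an orbit-size count plus an already-quoted lemma, which is arguably more robust and avoids the generator renormalization. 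Two small points to tidy: your $c,d$ are the orders of $\beta$ acting \emph{on} $X$ and $Y$, which reverses the labelling of Corollary \ref{C:commute}; and in the preserving case the assertion $N_X+N_Y=4$ should be justified by combining $N_X+N_Y\le 8$ with $N_X+N_Y\equiv 4\pmod{2s}$, after which the required $2(s+2)\ge 16$ axis vertices give the contradiction.
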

\begin{proof}
Assume there is such an embedding $\Gamma$. Let $\alpha$ and $\beta$ be diffeomorphisms of $(S^3, \Gamma)$ such that  $H = \langle\alpha,\beta \rangle = \mathbb{Z}_{2}\times\mathbb{Z}_{s}$.  By Corollary \ref{C:commute}, the motions in $H$ are all combinations of rotations around a pair of complementary geodesic circles $X$ and $Y$.  Any point in $S^3 - (X \cup Y)$ has an orbit of size $2s$ under the action of $H$.

Observe that $\a$, $\b^{s/2}$ and $\a\b^{s/2}$ are three distinct elements of order 2 in $H$, all fixing $X$ and $Y$ setwise.  One of them must be rotation around $X$, another rotation around $Y$, and the third the central inversion, since these are the only combinations of rotations around $X$ and $Y$ with order 2.  Since $\langle \a, \b\rangle = \langle \a, \a\b \rangle = \langle \a\b^{s/2}, \b\rangle = H$, we may assume without loss of generality that $\a$ is rotation around $X$ and $\b^{s/2}$ is rotation about $Y$.

Since $n = 2ks + s + 2$, for some integer $k$, we know $2n = 4ks + 2s + 4$.  If $\b(V) = W$, then the Automorphism Theorem tells us that $\b$ must have a 4-cycle in $V \cup W$.  But this means that $4\vert s$, contradicting the assumption that $\frac{s}{2}$ is odd.  Hence $\b(V) = V$.  By the Automorphism Theorem, $\b$ either fixes two vertices in each of $V$ and $W$, or has a two-cycle in each of $V$ and $W$.  In either case, $\b^2$ fixes two vertices in each of $V$ and $W$.

Suppose we also have $\a(V) = V$.  Then the orbit of a vertex in $S^3 - (X \cup Y)$ consists entirely of vertices in the same vertex set, and so we must have at least $s+2$ vertices from each vertex set embedded on $X \cup Y$. Any motion which fixes more than 4 vertices of $V \cup W$, including vertices from both $V$ and $W$, fixes a subgraph of $K_{n,n}$ which cannot be embedded in a circle.  Since there are at least $s+2 \geq 6$ vertices in each of $V$ and $W$, Smith Theory implies that we cannot embed vertices of both $V$ and $W$ on circles $X$ and $Y$.  Hence we must embed all $s+2$ vertices of $V$ on $X$ and all $s+2$ vertices of $W$ on $Y$.  Since $\b^2$ fixes two vertices in each of $V$ and $W$, it fixes vertices on both $X$ and $Y$, and therefore fixes $X\cup Y$.  But this would imply, by Smith Theory, that $\b^2$ is the identity, which is not the case.

Now suppose that $\a(V) = W$.  Since $\a$ is rotation about $X$, we cannot embed any vertices on $X$.  So we must embed $4ks+2s$ vertices in $S^3-(X\cup Y)$ and the remaining four vertices on $Y$ (two vertices from each of $V$ and $W$).  These vertices are fixed by $\b^{s/2}$, so the edges between them must also be in the fixed point set of $\b^{s/2}$, which is $Y$.  So the vertices from $V$ and $W$ must alternate around $Y$. Denote these vertices $v_1, v_2, w_1, w_2$, labeled so that $\a(v_i) = w_i$.  But then $\overline{v_1w_1}$ and $\overline{v_2w_2}$ must be embedded as semicircles of $Y$.  Since the endpoints are different, these semicircles must intersect, which contradicts our assumption that $\Gamma$ is an embedding.  

So the embedding $\Gamma$ cannot exist.
\end{proof}\\

\begin{lemma} \label{L:s+2mod2sr=4}
If $n\equiv s+2 \pmod{2s}$ and $r=4$, then there does not exist an embedding $\Gamma$ of $K_{n,n}$ such that $\mathbb{Z}_{r}\times\mathbb{Z}_{s} \subseteq \TSG(\Gamma)$.
\end{lemma}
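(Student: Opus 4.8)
The plan is to argue by contradiction, following the template of Lemma \ref{L:s+2mod2sr=2} but now carrying the extra order-$4$ generator. Suppose such a $\Gamma$ exists, and choose $\a,\b\in\TSG(\Gamma)$ of orders $4$ and $s$ with $H=\langle\a,\b\rangle=\Z_4\x\Z_s$; since $r=4\mid s$ we have $4\mid s$, so $s\ge 4$. By Corollary \ref{C:commute} (as $\lcm(4,s)=s>2$), after a change of basis $\a$ and $\b$ are simultaneous rotations about a pair of complementary geodesic circles $X$ and $Y$, and every point off $X\cup Y$ has a free $H$-orbit of size $4s$. Writing $n=2ks+s+2$ gives $2n=4ks+2s+4$; since $0<2s+4<4s$ for $s>2$, exactly $2s+4$ vertices fail to lie in full $4s$-orbits, and all of these must be embedded on $X\cup Y$. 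I first record the involution structure: $\{1,\a^2,\b^{s/2},\a^2\b^{s/2}\}$ is a Klein four-group of motions preserving the planes of $X$ and $Y$, so its three nontrivial elements are exactly the rotation about $X$, the rotation about $Y$, and the central inversion, in some order.

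Next I split into four cases according to whether $\a$ and $\b$ fix or interchange $V$ and $W$. The easiest is when both fix $V$ and $W$ setwise: then every $H$-orbit lies in a single vertex set, so the $2s+4$ circle vertices split as $s+2$ from $V$ and $s+2$ from $W$. As in Lemma \ref{L:s+2mod2sr=2}, Smith Theory forbids embedding three or more vertices of each of $V$ and $W$ on a single one of $X,Y$, so (since $s+2\ge 6$) the $V$-vertices must go on one circle and the $W$-vertices on the other. By the Automorphism Theorem $\b^2$ fixes two vertices of $V$ and two of $W$; these would then lie on different circles, so $\fix(\b^2)$ would meet both $X$ and $Y$. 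But $\fix(\b^2)$ is a single geodesic circle or empty, so $\b^2=1$, contradicting $s\ge 4$.

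The substantive work is the three cases where $\a$ or $\b$ interchanges $V$ and $W$. The counting crux is that an interchanging generator of order $s$ is forced by the Automorphism Theorem into case (9): $V\cup W$ is partitioned into $s$-cycles together with one $4$-cycle, so there are $(2n-4)/s=4k+2$ $s$-cycles. Since $4k+2\equiv 2\pmod 4$, the commuting order-$4$ generator (via the Orbits Lemma) cannot group these $s$-cycles into $H$-orbits of size $4s$: at least one $\a$-orbit of $\b$-cycles has size $1$ or $2$, which forces at least $2s$ further vertices to have nontrivial stabilizer and hence to lie on $X\cup Y$, on top of the four vertices of the $4$-cycle. I then derive a contradiction exactly as at the end of Lemma \ref{L:s+2mod2sr=2}: the four $4$-cycle vertices are pinned to an involution axis and split $2$--$2$ between $V$ and $W$, so two of the edges joining them must embed as semicircles of that axis circle with distinct endpoints, and hence intersect; the remaining circle vertices are excluded using the Disjoint Fixed Points Lemma and Smith Theory (a rotation fixing a circle cannot pointwise-fix vertices of both $V$ and $W$).

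The main obstacle is the bookkeeping of these extra $2s$ circle vertices, which is precisely what distinguishes $n\equiv s+2$ from the admissible $n\equiv 2\pmod{2s}$, where $4k+2$ is replaced by $4k$ and only four vertices land on the circles. I expect the order-$4$ generator to generate most of the remaining case-work: unlike the single involution in the $r=2$ lemma, it can be realized either as an order-$4$ rotation fixing a circle (which then can carry no interchanged vertices) or as a fixed-point-free glide rotation whose square fixes a circle, and each realization must be excluded separately by combining Corollary \ref{C:commute}, the Disjoint Fixed Points Lemma, and the count $2n=4ks+2s+4$.
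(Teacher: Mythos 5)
Your first case (both $\a$ and $\b$ preserve $V$ and $W$ setwise) is complete and is essentially the paper's entire proof; the genuine gap is that the three interchanging cases, which you present as ``the substantive work,'' are only sketched and explicitly left unfinished (``each realization must be excluded separately''). The insight you are missing is that your first-case counting works unconditionally: a free $H$-orbit of size $4s$ contributes a multiple of $2s$ to $V$ whether or not some generator interchanges the vertex sets (it contributes $4s$ and $0$, or $2s$ and $2s$), so $n\equiv s+2\pmod{2s}$ forces at least $s+2\geq 6$ vertices of \emph{each} of $V$ and $W$ onto $X\cup Y$ in every case. Your Smith-theory separation argument then puts all of these $V$-vertices on one circle and all of the $W$-vertices on the other; since $\a$ and $\b$ preserve each of $X$ and $Y$ setwise by Corollary \ref{C:commute}, they must then preserve $V$ and $W$ setwise. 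Thus the interchanging cases are vacuous, and the contradiction you already derived ($\fix(\b^2)$ meeting both circles forces $\b^2=1$) finishes the proof.

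As written, your treatment of the interchanging cases does not stand on its own. The reduction of an interchanging order-$s$ generator to case (9) of the Automorphism Theorem is not forced when $s=4$ (case (1) is also consistent with $2n=16k+12$); the parenthetical claim that a rotation fixing a circle cannot pointwise fix vertices of both $V$ and $W$ is false (it can fix two of each, since $K_{2,2}$ embeds in $S^1$ --- only five or more such vertices, with both sides represented, give a contradiction); and the intersecting-semicircles endgame of Lemma \ref{L:s+2mod2sr=2} relied on the order-$2$ generator being a rotation about $X$ that acts antipodally on $Y$ and pairs $v_i$ with $w_i$, none of which is established here for an order-$4$ generator that may be a glide rotation. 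The counting observation about the $4k+2$ $s$-cycles is plausible, but it is a plan rather than a proof, and the uniform counting above renders all of it unnecessary.
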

\begin{proof}
Assume there is such an embedding $\Gamma$. Let $\alpha$ and $\beta$ be diffeomorphisms of $(S^3, \Gamma)$ such that  $H = \langle\alpha,\beta \rangle = \mathbb{Z}_{4}\times\mathbb{Z}_{s}$.  By Corollary \ref{C:commute}, the motions in $H$ are all combinations of rotations around a pair of complementary geodesic circles $X$ and $Y$.  Any point in $S^3 - (X \cup Y)$ has an orbit of size $4s$ under the action of $H$.

Since $n = 2ks + s + 2$, for some integer $k$, we know $2n = 4ks + 2s + 4$.  So at least $2s+4$ vertices of $K_{n,n}$ must be embedded on $X \cup Y$. Observe that $\a^2$, $\b^{s/2}$ and $\a^2\b^{s/2}$ are three distinct elements of order 2 in $H$, all fixing $X$ and $Y$ setwise.  One of them must be rotation around $X$, another rotation around $Y$, and the third the central inversion, since these are the only combinations of rotations around $X$ and $Y$ with order 2.  Any motion which fixes more than 4 vertices of $V \cup W$, including vertices from both $V$ and $W$, fixes a subgraph of $K_{n,n}$ which cannot be embedded in a circle.  Since there are at least $s+2 \geq 6$ vertices in each of $V$ and $W$, Smith Theory implies that we cannot embed vertices of both $V$ and $W$ on circles $X$ and $Y$.  Hence we must embed all $s+2$ vertices of $V$ on $X$ and all $s+2$ vertices of $W$ on $Y$.

Since $\a$ and $\b$ fix $X$ and $Y$ setwise, they must also fix $V$ and $W$ setwise.  Since $s+2$ is not divisible by $s$, $\b$ must divide each set of $s+2$ vertices into an $s$-cycle and either two fixed points or a 2-cycle.  Then $\b^2$ fixes points in both $V$ and $W$, and therefore must fix both $X$ and $Y$ pointwise.  But this would mean that $\b^2$ is the identity, which is impossible since $s > 2$.  So the embedding $\Gamma$ cannot exist.
\end{proof}\\

\begin{lemma}\label{L:s/2mods}
If $n \equiv \frac{ls}{2}\pmod{rs}$, $s$ is even, $s>4$, and $l$ is odd, then there does not exist an embedding $\Gamma$ of $K_{n,n}$ such that $\mathbb{Z}_{r}\times\mathbb{Z}_{s} \subseteq \TSG(\Gamma)$.
\end{lemma}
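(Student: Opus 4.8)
The plan is to assume that such an embedding $\Gamma$ exists, with $\Z_r \x \Z_s = \langle \a, \b \rangle \subseteq \TSG(\Gamma)$, and to derive a contradiction, following the template of Lemmas~\ref{L:s+2mod2sr=2} and~\ref{L:s+2mod2sr=4} but with the counting sharpened for the hypothesis that $l$ is odd. By the Isometry Theorem I may assume $\langle \a, \b \rangle$ acts by orientation-preserving isometries, and by Corollary~\ref{C:commute} there is a pair of completely orthogonal geodesic circles $X$ and $Y$ about which both $\a$ and $\b$ are combinations of rotations. Let $K_X, K_Y$ be the subgroups fixing $X$, respectively $Y$, pointwise. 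Since an element fixing a single point of $X$ must have trivial rotation on $X$, and hence fix all of $X$ (and similarly for $Y$), while an element fixing points of both circles is the identity, I obtain the three facts I will use repeatedly: $|K_X|, |K_Y| \ge r \ge 2$; $K_X \cap K_Y = \{1\}$, so \emph{no} non-trivial element fixes a vertex on $X$ together with a vertex on $Y$; and, writing $\b|_X, \b|_Y$ for the rotations $\b$ induces on the two circles, $s = \o(\b) = \lcm(\o(\b|_X), \o(\b|_Y))$ with $\o(\b|_X)$ dividing the number $t_X$ of vertices on $X$ (and likewise on $Y$), because $\b$ permutes those vertices in free rotational orbits.

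Next I would extract the two combinatorial constraints. Since $l$ is odd, $2n = s \cdot (\text{odd})$, so $n \equiv \frac{s}{2} \pmod s$; when $\a$ and $\b$ both preserve the parts $V$ and $W$ the off-circle orbits are monochromatic of size a multiple of $s$, so at least $\frac{s}{2} \ge 3$ vertices of \emph{each} of $V$ and $W$ lie on $X \cup Y$. For their placement I would combine Smith Theory with the fact that a finite-order homeomorphism of an edge fixing both endpoints fixes the edge pointwise: a non-trivial $\kappa \in K_X$ fixes every vertex on $X$, and hence every edge joining two of them, pointwise, so these lie in $\fix(\kappa) = X \cong S^1$; as $S^1$ admits no vertex of degree $\ge 3$, the circle $X$ cannot carry three vertices of one part and one of the other. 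Thus each circle is monochromatic or carries at most two vertices of each part.

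The contradiction then follows by cases on the action of $\a, \b$ on $\{V, W\}$. Suppose first that both preserve $V$ and $W$. If both circles are monochromatic, each part sits alone on one circle with count $\equiv \frac s2 \pmod s$, so $\o(\b|_X) \mid \gcd(t_X, s) = \frac s2$ and $\o(\b|_Y) \mid \frac s2$, whence $s = \lcm(\o(\b|_X),\o(\b|_Y)) \mid \frac s2$, absurd. If both circles are mixed, then $\o(\b|_X), \o(\b|_Y) \le 2$ (each part contributes at most two vertices per circle), so the lcm is at most $2 < s$. If exactly one circle is mixed, the monochromatic circle carries one entire part, forcing the other part --- of size $\ge \frac s2 > 2$ --- onto the mixed circle, where only two of it fit; this is impossible. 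Hence no embedding can have $\a, \b$ both preserving the parts.

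The main obstacle is the remaining case, in which a generator interchanges $V$ and $W$; here the occupancy bound can fail, because the vertices may avoid the circles entirely (this happens precisely when $rs \mid 2n$, which forces $r$ odd with $\b$ the part-swapping generator). When there \emph{is} a part-swapping involution $\sigma$ in $\langle \a, \b\rangle$, I would argue geometrically rather than by counting: $\sigma$ interchanges each adjacent pair $\{v, \sigma v\}$, so the edge $\overline{v\,\sigma v}$ is $\sigma$-invariant with its endpoints exchanged and therefore crosses $\fix(\sigma)$, a circle $Z$; since $|K_Z| \ge r$, any element of $K_Z$ of order $>2$ fixes that crossing point while sending $\overline{v\,\sigma v}$ to a \emph{distinct} inverted edge through the same point, contradicting that $\Gamma$ is an embedding. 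For mixed circles in the swapping case the order bound $\o(\b|_X), \o(\b|_Y) \le 4$ (now using that $\b$ alternates parts along each circle-orbit, so these orbits have even length) again beats $s > 4$. I expect the delicate points to be organizing this part-swapping case --- in particular guaranteeing that the relevant involution is a rotation with $|K_{\fix(\sigma)}| > 2$ --- and reconciling the residual configurations where many vertices lie off both circles.
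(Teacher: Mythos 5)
Your part-preserving case is sound, and it is a genuinely different route from the paper's: the paper never considers that case geometrically, because the Automorphism Theorem applied to the order-$s$ generator already forces $\b(V)=W$ (since $n\equiv \frac{s}{2}\pmod{s}$ with $s>4$, none of the part-preserving cycle structures, nor the single-$4$-cycle exception, is available). The genuine gap is therefore in the part-swapping case, which is in fact the \emph{only} case, and it sits exactly at the points you flag but do not close. Concretely: (i) a part-swapping \emph{involution} need not exist. The part-preserving elements form an index-$2$ subgroup $N$, and the entire $2$-torsion of $\Z_r\x\Z_s$ can lie in $N$ --- for instance if $\b$ swaps the parts, $4\mid s$, and $\a$ preserves them, then $\a^{r/2}$, $\b^{s/2}$ and $\a^{r/2}\b^{s/2}$ all preserve the parts, so your $\sigma$ does not exist. (ii) Even when $\sigma$ exists and $\fix(\sigma)$ is a circle $Z\in\{X,Y\}$, your crossing-point argument needs an element of $K_Z$ of order greater than $2$, and when $r=2$ the bound $|K_Z|\ge r$ only gives $K_Z=\{1,\sigma\}$ (e.g.\ $\Z_2\x\Z_6$ with $\rho_X(H)$ of order $6$). (iii) The configurations with every vertex off both circles are left open. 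None of these is bookkeeping; together they leave the main case unproved.

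The ingredient you are missing is to feed the congruence into the Automorphism Theorem \emph{before} doing the geometry. Since $n\equiv\frac{s}{2}\pmod{s}$ and $s>4$, the order-$s$ generator $\b$ must satisfy $\b(V)=W$ with $V\cup W$ partitioned entirely into $s$-cycles (the exceptional $4$-cycle would force $s\mid 4$). Because $\b$ preserves each of $X$ and $Y$ setwise, a single vertex on a circle drags its whole alternating $s$-cycle onto that circle, i.e.\ at least $s/2\ge 3$ vertices of \emph{each} part --- which your own observation that some non-trivial element of $H$ fixes each circle pointwise (Smith Theory plus the degree-$3$ obstruction) forbids. That collapses the swapping case to pure counting on nearly empty circles, with no need for a swapping involution at all; the paper runs this count as ``at least $ls$ vertices on $X\cup Y$, at most $4$ per circle, hence $s=6$ or $8$, and then the rotation orders $c,d\in\{1,2,4\}$ cannot have least common multiple $s$.'' (Both your sketch and the paper's proof are silent on the residual subcase $r\mid l$ with $l$ odd, where all $2n$ vertices can avoid $X\cup Y$; there your $\sigma$-argument is actually the right tool when $s/2$ is odd, but the case $4\mid s$ still needs an argument.)
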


\begin{proof}
Assume there is such an embedding $\Gamma$. Let $\alpha$ and $\beta$ be diffeomorphisms of $(S^3, \Gamma)$ such that  $H = \langle\alpha,\beta \rangle = \mathbb{Z}_{r}\times\mathbb{Z}_{s}$.  By Corollary \ref{C:commute}, the motions in $H$ are all combinations of rotations around a pair of complementary geodesic circles $X$ and $Y$.  Any point in $S^3 - (X \cup Y)$ has an orbit of size $rs$ under the action of $H$.  Since $2n = 2krs + ls$ for some integer $k$, at least $ls$ vertices will need to be embedded on $X\cup Y$.  From the Automorphism Theorem, the only way $\b$ can act on $V \cup W$ is if $\b(V) = W$, and the action of $\b$ partitions $V \cup W$ into $s$-cycles.  Hence each of $X$ and $Y$ must contain either vertices of both $V$ and $W$ or no vertices at all.

Suppose that $\a$ is the combination of a rotation of order $a$ about $X$ and order $b$ around $Y$ (with $\lcm(a,b) = r$), and $\b$ is the combination of a rotation of order $c$ around $X$ and order $d$ around $Y$ (with $\lcm(c,d) = s$).  If $c < s$, then $\b^c$ fixes $Y$ pointwise.  On the other hand, if $c = s$, then $a \vert c$ (since $a\vert r$ and $r \vert s$), and $c = ap$ for some $p$.  So $\b^p$ and $\a$ have the same action on $Y$, and $\a \b^{-p}$ fixes $Y$.  So there is some non-trivial element of $H$ which fixes $Y$.  Similarly, there is a non-trivial element fixing $X$ pointwise.  By Smith Theory, this means $X$ and $Y$ can each contain at most 4 vertices, so $ls \leq 8$.   Hence $ls = s = 6$ or $8$.  If $s = 6$, then we must embed 4 vertices on $X$ and 2 vertices on $Y$ (or vice versa).  But then $c$ and $d$ are both 2 or 4, and $\lcm(c,d) \neq s$.  Similarly, if $s = 8$, we must embed 4 vertices on each circle, and $c$ and $d$ are again either 2 or 4, so $\lcm(c,d) \neq s$.  So the embedding $\Gamma$ cannot exist.
\end{proof}\\

\begin{lemma}\label{L:s/2+2mods}
If $n \equiv \frac{s}{2} +2 \pmod{s}$, $4\vert s$ and $s>4$ there does not exist an embedding $\Gamma$ of $K_{n,n}$ such that $\mathbb{Z}_{r}\times\mathbb{Z}_{s} \subseteq \TSG(\Gamma)$.
\end{lemma}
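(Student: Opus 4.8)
The plan is to assume, for contradiction, that such an embedding $\Gamma$ exists, with $H = \langle \alpha, \beta \rangle \cong \mathbb{Z}_r \times \mathbb{Z}_s \subseteq \TSG(\Gamma)$ and $\beta$ of order $s$. Writing $n = ks + \frac{s}{2} + 2$ gives $2n = (2k+1)s + 4$, so $2n \equiv 4 \pmod{s}$ with $s \geq 8$ (since $4 \mid s$ and $s > 4$). The first step is to pin down how $\beta$ acts on the vertex sets. If $\beta(V) = V$, then, since $\beta$ is realized by an orientation-preserving diffeomorphism, the Automorphism Theorem applies with $\varphi(V)=V$, and exactly the case analysis of Lemma \ref{L:cyclic1} forces $n \equiv 0,1,2 \pmod{s}$ or $n \equiv 0 \pmod{\frac{s}{2}}$. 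But $n \equiv \frac{s}{2}+2 \pmod{s}$ satisfies none of these (note $n \equiv 2$, not $0$, modulo $\frac{s}{2}$), so we must have $\beta(V) = W$. The congruence $2n \equiv 4 \pmod{s}$ then rules out case (1) of the Automorphism Theorem, leaving case (9): $\beta$ partitions $V \cup W$ into $(2k+1)$ $s$-cycles together with a single $4$-cycle.

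Next I would translate this into geometry. By Corollary \ref{C:commute}, after a change of basis $\alpha$ and $\beta$ are simultaneous rotations about a pair of complementary geodesic circles $X$ and $Y$, with a generic point having $H$-orbit of size $rs$ and $\langle\beta\rangle$-orbit of size $s$. Since $4 \neq s$, the $4$-cycle cannot be generic, so it lies entirely on $X$ or on $Y$; say on $X$, where $\beta$ then acts with order $4$. Because every $\langle\beta\rangle$-orbit on $X$ then has size $4$ and the $4$-cycle is the unique such orbit among the vertices, $X$ carries exactly these four vertices (two from $V$, two from $W$).

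I would then show $Y$ carries no vertices. Its $\langle\beta\rangle$-orbits have size $d$, the order of $\beta$ on $Y$; here $d \neq 4$ (the only $4$-cycle is on $X$), so if $Y$ were nonempty its orbits would be $s$-cycles and $d = s$. In that case the subgroup of $H$ fixing $Y$ pointwise has order $r \geq 2$, so by Smith Theory a nontrivial such element fixes all $\geq s \geq 8$ vertices of an $s$-cycle — vertices from both $V$ and $W$, together with their connecting edges — inside the circle $Y$, which is impossible. Hence $Y$ is empty, all $(2k+1)$ $s$-cycles are generic, and they fall into $H$-orbits of $r$ cycles each, forcing $r \mid (2k+1)$.

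Finally the contradiction comes from faithfulness. By the Orbits Lemma $\alpha$ preserves the unique $4$-cycle, so the full $H$-orbit of a vertex on $X$ stays among the four vertices of $X$; since the rotation subgroup induced on $X$ contains the order-$4$ rotation $\beta|_X$, this subgroup has order exactly $4$. As every element of $H$ is determined by its rotations on $X$ and on $Y$, $H$ embeds faithfully into the product of these two rotation groups, the second of which has order dividing $s$; hence $rs \mid 4s$, i.e. $r \mid 4$. Thus $r$ is even, contradicting $r \mid (2k+1)$ with $2k+1$ odd. The case where the $4$-cycle lies on $Y$ is symmetric, completing the argument. The step needing the most care — and the crux of the whole proof — is the collision between the geometric faithfulness constraint ($r \mid 4$) and the combinatorial orbit count ($r \mid$ an odd number); making the ``$Y$ is empty'' reduction airtight via Smith Theory is precisely what lets these two facts be applied together.
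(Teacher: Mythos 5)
Your proof is correct, and while its first half tracks the paper closely (forcing $\beta(V)=W$ with a single $4$-cycle via the Automorphism Theorem, placing that $4$-cycle on one circle, say $X$, via Corollary \ref{C:commute}, and using Smith Theory to control vertices on the axes), your endgame is genuinely different. The paper splits on the parity of $r$: for $r$ even it counts at least $s+4$ vertices on $X\cup Y$ and uses the Smith-theoretic bound to force all of $V$ onto one circle and all of $W$ onto the other, contradicting $\beta(V)=W$; for $r$ odd it shows $\alpha$ must be the order-$r$ rotation about $X$ fixing the four exceptional vertices, whence $\beta^{4q}=\alpha^{\pm 1}$ and $H$ is cyclic. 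You instead prove the stronger structural fact that $Y$ carries no vertices at all (via the order-$r$ pointwise stabilizer of $Y$ and Smith Theory), and then play two divisibility constraints against each other: counting free $H$-orbits of size $rs$ among the $(2k+1)s$ generic vertices gives $r\mid(2k+1)$, hence $r$ odd, while the faithful embedding of $H$ into the product of the induced rotation groups on $X$ and $Y$ (the first of order exactly $4$ by the Orbits Lemma, the second of order dividing $s$) gives $rs\mid 4s$, hence $r\mid 4$ and $r$ even since $r\geq 2$. This buys you a unified argument with no case split on $r$, at the cost of needing the ``$Y$ is empty'' reduction to be airtight — which it is, by exactly the same fixed-subgraph-in-a-circle argument the paper uses elsewhere. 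Both arguments rely on the section's standing assumption $r\geq 2$, which you should state explicitly since $r\mid 4$ alone does not contradict $r\mid(2k+1)$ when $r=1$.
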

\begin{proof}
Assume there is such an embedding $\Gamma$. Let $\alpha$ and $\beta$ be diffeomorphisms of $(S^3, \Gamma)$ such that  $H = \langle\alpha,\beta \rangle = \mathbb{Z}_{r}\times\mathbb{Z}_{s}$.  By Corollary \ref{C:commute}, the motions in $H$ are all combinations of rotations around a pair of complementary geodesic circles $X$ and $Y$.  From the Automorphism Theorem, the only way $\b$ can act on $V \cup W$ is if $\b(V) = W$, and the action of $\b$ partitions $V \cup W$ into $s$-cycles along with one 4-cycle.  Since any point in $S^3-(X\cup Y)$ has an orbit of size $rs > 8$ under the action of $H$, we must have four vertices embedded on one of $X$ and $Y$.  Suppose the four vertices (two from each of $V$ and $W$) are embedded on $X$; then $\b$ consists of a combination of a rotation around $X$ with a rotation of order 4 around $Y$.

First suppose that $r$ is even.  Since $2n = 2ks + s + 4$ for some integer $k$, we must embed at least $s+4 \geq 12$ vertices on $X\cup Y$; since $\b(V) = W$, these vertices are evenly divided between $V$ and $W$.  By the same argument used in Lemma \ref{L:s+2mod2sr=4}, this means we must embed the vertices of $V$ on one circle and the vertices of $W$ on the other.  But this is impossible, since $\b$ interchanges the vertex sets.

If $r$ is odd, then $\a(V) = V$.  By the Automorphism Theorem, $\a$ must fix two vertices of each of $V$ and $W$, or have a 2-cycle in each vertex set.  Since $r$ is odd, $\a$ does not have any 2-cycles, so it must fix two vertices in each set.  Hence $\a$ is a rotation of order $r$ about $X$, and fixes the four vertices we have embedded on $X$.  Since $4 \vert s$ and $r$ is odd, $s = 4qr$ for some integer $q$.  Then $\b^{4q}$ is a rotation of order $r$ around $X$, so $\b^{4q} = \a^{\pm 1}$.  But then $H$ is cyclic, which is a contradiction.

So the embedding $\Gamma$ does not exist.
\end{proof}\\

\section{Proof of Theorem \ref{T:product}} \label{S:construct}

Now we will prove that for each condition on $n$ listed in Theorem \ref{T:product} there does exist embeddings $\Gamma_1$ and $\Gamma_2$ of $K_{n,n}$ such that $\TSG(\Gamma_1) = \Z_r \x \Z_s$ and $\TSG(\Gamma_2) = (\Z_r \x \Z_s)\ltimes \Z_2$, thus proving Theorem \ref{T:product}. 

We will first show that for each condition on $n$ listed in Theorem \ref{T:product} there exists an embedding of $K_{n,n}$ with topological symmetry group containing $(\Z_r \x \Z_s)\ltimes \Z_2$.  Then we will use the Subgroup Lemma and Subgroup Corollary to modify these embeddings (when possible) so that the topological symmetry group is isomorphic to $\Z_r \x \Z_s$ or $(\Z_r \x \Z_s)\ltimes \Z_2$. When we construct our embeddings we will use the following subgroups of $\so(4)$.  As in Section \ref{S:cyclic}, let $A$ be a plane in $\R^4$ and $B$ be its orthogonal complement, and let $C$ be a plane spanned by a vector in $A$ and a vector in $B$.  We will let $X$, $Y$ and $Z$ denotes the intersections with $S^3$ of planes $A$, $B$ and $C$, respectively.

\begin{itemize}

\item Let $g$ be a rotation of order $r$ about $A$ and $h$ be a rotation of order $s$ about $B$. Let $\varphi$ be a rotation of order $2$ about $C$. Then $J_1=\langle g,h, \varphi \rangle\cong(\mathbb{Z}_{r}\times\mathbb{Z}_{s})\ltimes\mathbb{Z}_{2}$. 

\item Suppose that $4 \vert s$.  Let $g$ be a rotation of order $2$ around $A$. Let $h$ be a glide rotation which is the product of a rotation of order $4$ around $A$ and a rotation of order $s$ around $B$. Therefore $h$ has order $\lcm(4,s) = s$. Let $\varphi$ be a rotation of order $2$ about $C$. Then $J_2=\langle g,h, \varphi \rangle\cong(\mathbb{Z}_{2}\times\mathbb{Z}_{s})\ltimes\mathbb{Z}_{2}$. 

\end{itemize} \medskip

\begin{lemma} \label{L:n=0}
If $n \equiv 0 \pmod{s}$ and $H = \Z_r \x \Z_s$ or $(\Z_r \x \Z_s) \ltimes \Z_2$, then there exists an embedding, $\Gamma$, of $K_{n,n}$ in $S^{3}$ such that $H \subseteq \TSG(\Gamma)$.  Moreover, if $n \geq 2rs$, we can choose the embedding so that $H = \TSG(\Gamma)$.
\end{lemma}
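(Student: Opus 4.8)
The plan is to construct, for $n = ks$ with $k = n/s$, an embedding realizing the larger group $(\Z_r \x \Z_s)\ltimes\Z_2$; since this contains $\Z_r \x \Z_s$, a single construction settles the containment $H \subseteq \TSG(\Gamma)$ for both choices of $H$. I would use the subgroup $J_1 = \langle g, h, \varphi\rangle$, but arrange the action so that $g$, $h$ and $\varphi$ all fix the bipartition $\{V,W\}$ setwise. Concretely, embed all of $V$ on the circle $Y = \fix(h)$ and all of $W$ on the circle $X = \fix(g)$. Then $h$ fixes $V$ pointwise while $g$ rotates it with order $r$; $g$ fixes $W$ pointwise while $h$ rotates it with order $s$; and $\varphi$ (being a rotation about the plane $C$ spanned by a vector of $A$ and a vector of $B$) restricts to an order-$2$ reflection of each of $X$ and $Y$, fixing $X \cap Z$ and $Y \cap Z$ respectively. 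Thus $\langle g, \varphi\rangle$ and $\langle h, \varphi\rangle$ act on $Y$ and $X$ as the dihedral groups $D_r$ and $D_s$.

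The counting is the first thing to get right. On $Y$ a generic $D_r$-orbit has size $2r$, while the orbit through the two points of $Y \cap Z$ has size $r$; since $r \mid s \mid ks$, I can realize $|V| = ks$ exactly using $\lfloor (ks/r)/2\rfloor$ generic orbits together with one size-$r$ orbit through $Z$ when $ks/r$ is odd. Symmetrically I realize $|W| = ks$ on $X$ from $D_s$-orbits of size $2s$ and, when $k$ is odd, one size-$s$ orbit through $X \cap Z$. I would place all the generic orbits off every reflection axis, so that the only vertices lying on $Z$ are at most two $V$-vertices in $Y \cap Z$ and at most two $W$-vertices in $X \cap Z$, and these (at most four) points alternate $V, W, V, W$ around $Z$.

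Next I extend to the edges using the Edge Embedding Lemma. The crucial observation is that $V \subseteq Y$ and $W \subseteq X$ lie on disjoint circles, so no nontrivial element of $J_1$ fixes both endpoints of an edge, the single exception being edges joining a $V$-vertex in $Y\cap Z$ to a $W$-vertex in $X\cap Z$, which are pointwise fixed by $\varphi$ (and by nothing else). These exceptional edges form a copy of $K_{2,2}$ (or less) whose vertices alternate around $Z$, hence embed as disjoint arcs of $Z$; this verifies conditions (1)--(3). Since every element of $J_1$ preserves the bipartition, no adjacent pair is interchanged, so conditions (4)--(5) hold vacuously. The Edge Embedding Lemma then produces a $J_1$-invariant embedding $\Gamma$ with $(\Z_r\x\Z_s)\ltimes\Z_2 \subseteq \TSG(\Gamma)$, and a fortiori $\Z_r\x\Z_s \subseteq \TSG(\Gamma)$.

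For the final sentence, assume $n \geq 2rs$, i.e.\ $k \geq 2r$. Any edge joining a $V$-vertex and a $W$-vertex with neither on $Z$ has trivial $J_1$-stabilizer, so its edge-orbit has full size $2rs$, and the hypothesis $n \geq 2rs$ guarantees enough such independent orbits to run the Subgroup Lemma. I would select edges $e_1, e_2, \dots$ with distinct orbits forcing any automorphism that fixes $e_1$ pointwise and each orbit setwise to fix a subgraph too large to embed in $S^1$, yielding $\Gamma'$ with $\TSG(\Gamma') = (\Z_r\x\Z_s)\ltimes\Z_2$; then applying the Subgroup Corollary to the free edge gives $\Gamma''$ with $\TSG(\Gamma'') = \Z_r\x\Z_s$. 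I expect this last step to be the main obstacle: verifying the rigidity hypothesis of the Subgroup Lemma, namely choosing edge-orbits that pin down a large fixed subgraph, is exactly where the bound $n \geq 2rs$ enters, and it is consistent that the cases $n = ls$ with $l < 2r$ are left open.
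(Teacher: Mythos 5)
Your construction for the containment $H \subseteq \TSG(\Gamma)$ is a genuinely different (and workable) route: the paper embeds $2krs$ vertices of each of $V$ and $W$ in \emph{free} orbits inside a small ball, placing only the leftover $ls$ vertices (with $l < 2r$) on the circles $X$ and $Y$, whereas you put all of $V$ on $Y=\fix(h)$ and all of $W$ on $X=\fix(g)$. Your verification of the Edge Embedding Lemma is essentially right, with one expository omission: the adjacent pairs pointwise fixed by an involution are not only those on $Z$ but all pairs $\{g^a(y_0), h^b(x_0)\}$ lying on the images $g^ah^b(Z)$, each fixed by the conjugate $g^{2a}h^{2b}\varphi$; one must check conditions (1)--(3) on every such circle, not just on $Z$. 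This extends by equivariance and is not a real problem.

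The genuine gap is in the ``moreover'' clause, and it is caused by your choice of vertex embedding. In your embedding \emph{every} vertex has a large stabilizer: a generic $v \in V \subseteq Y$ is fixed by all of $\langle h\rangle$ (order $s$) and a generic $w \in W \subseteq X$ by all of $\langle g\rangle$ (order $r$). Consequently, even though a generic edge $\overline{vw}$ has trivial stabilizer and a full orbit of size $2rs$, that orbit contains the $s$ distinct edges $\overline{v\,h^b(w)}$ all adjacent to $v$ (and $r$ edges adjacent to $w$). The engine of the Subgroup Lemma argument in this paper is the statement ``$e_i$ is the \emph{only} edge of $\langle e_i\rangle_H$ adjacent to the already-fixed vertex, hence $e_i$ and its other endpoint are fixed''; that step fails outright here, since an automorphism fixing $v$ and the orbit setwise may freely permute $\{h^b(w)\}$. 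You cannot propagate fixedness beyond the two endpoints of $\ep_1$, so you never reach a fixed subgraph that fails to embed in $S^1$, and you have not supplied any substitute mechanism (you explicitly defer this step). You have also misidentified the role of the hypothesis $n \geq 2rs$: it is not about having ``enough independent orbits'' of edges, but about having at least one orbit of $2rs$ vertices in each of $V$ and $W$ with \emph{trivial} stabilizer, which is possible only by embedding those vertices off $X \cup Y$ in a small ball, exactly as the paper does. To repair your proof you would need to revert to the paper's mixed embedding (free orbits plus at most $ls$ vertices on the circles) before invoking the Subgroup Lemma and the Subgroup Corollary.
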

\begin{proof}
Since $n\equiv 0 \pmod {s}$, $n = 2krs + ls$ for some integers $k$ and $l$, where $0 \leq l < 2r$. We will use the group of motions $J_{1}$. Pick a small ball, $M$, such that for each non-trivial $h \in J_1$, $h(M) \cap M = \emptyset$ (i.e. $J_1$ acts freely on $M$). We will pick $k$ points, $p_{1}, ... , p_{k}$, and $k$ points, $q_{1}, ..., q_{k}$, inside $M$. Then the orbit of each point has $2rs$ elements. We will embed $2krs$ vertices of $V$ as the points in the orbits of the $p_{i}'s$ and $2krs$ vertices of $W$ as the points in the orbits of the $q_{i}'s$. We still have $ls$ vertices from $V$ and $ls$ vertices from $W$ to embed. Let $F$ denote the union of all the axes of the rotations $g^ah^b\vf$.  If $l$ is even, place $\frac{l}{2}$ points on $X - F$ such that each point has a distinct orbit under the action of $J_1$. Then the orbit of each point has $2s$ points. Embed the $ls$ vertices of $V$ as the points in these orbits. Since $r|s$ then $s=rm$ for some $m\in \mathbb{Z}$. Place $\frac{lm}{2}$ points on $Y - F$ such that each point has a distinct orbit under $J_1$. Then each orbit has $2r$ points. Embed the $ls=lmr$ vertices of $W$ as the points in these orbits.

If $l = 2j+1$ is odd, embed $2js$ vertices of $V$ and $W$ as described above.  We are left with $s$ vertices from each set.  Let $x$ be one of the two points in $X \cap Z$; then the orbit of $x$ under $J_1$ has $s$ points.  Embed the remaining vertices of $V$ on $X$ as the points in the orbit of $x$ (if $s$ is even, there will be vertices embedded at both points of $X \cap Z$).  Since $r|s$, $s=rm$ for some integer $m$. If $m$ is even, place $\frac{m}{2}$ points on $Y - F$. Then the orbit of each point has $2r$ elements. Embed the $s=rm$ vertices of $W$ as the points in the orbits. If $m$ is odd, then $m=2t+1$ for some integer $t$. Embed the $2tr$ vertices as in the case when $m$ is even. There are $r$ vertices remaining. Let $y$ be one of the two points in $Y \cap Z$; then the orbit of $y$ under $J_1$ has $r$ points. Embed the remaining $r$ vertices of $W$ as the points in the orbit of $y$ (if $r$ is even, there will be vertices embedded at both points of $Y \cap Z$).

Now we will show that we can embed the edges of $K_{n,n}$. If we have not embedded vertices in $X \cap Z$ and $Y\cap Z$, then no element of $J_1$ fixes an adjacent pair of vertices, and conditions (1), (2) and (3) of the Edge Embedding Lemma are satisfied.  However, if there is a point $v \in V$ on $X\cap Z$ and a point $w\in W$ on $Y \cap Z$, then each pair $\{h^i(v), g^j(w)\}$ is fixed by $h^{2i}g^{2j}\varphi$. Since $h^{2i}g^{2j}\varphi$ is the only element of $J_1$ fixing this pair, condition (1) of the Edge Embedding Lemma is met. Since at most two vertices of $V$ and two vertices of $W$ are embedded on each circle $h^ig^j(Z)$, the vertices from $V$ and $W$ alternate around the circle, and the circles intersect only on $X$ and $Y$, there exist arcs bounded by each pair whose interiors are disjoint from $V\cup W$ and from each other. Thus condition (2) is met. The only motion of $J_1$ setwise fixing the pair of vertices $\{h^i(v), g^j(w)\}$, or any point on the interior of the arc between them, is the rotation $h^{2i}g^{2j}\varphi$. Hence condition (3) is met. Since no adjacent vertices are interchanged by any motion of $J_1$, conditions (4) and (5) are met. Therefore we are able to embed the edges of $K_{n,n}$ to get an embedding $\Gamma$ so that $(\mathbb{Z}_{r}\times\mathbb{Z}_{s})\ltimes\mathbb{Z}_{2}\subseteq \TSG(\Gamma)$. 

Now we will apply the Subgroup Lemma to show that we can modify the embedding so that $\TSG = (\mathbb{Z}_{r}\times\mathbb{Z}_{s})\ltimes \Z_2$ if $k > 0$ (i.e. $n \geq 2rs$). Since $k > 0$, we have a $2rs$-orbit from $V$ and and $2rs$-orbit from $W$, say $\{v_{1}, v_{2}, \ldots, v_{2rs}\}$ and $\{w_{1}, w_{2}, \ldots, w_{2rs}\}$ respectively.  We label the vertices so that for any $j \in J_1$, if $v_i = j(v_1)$, then $w_i = j(w_1)$. Let $e_{i} = \overline{v_1w_i}$. Note that the orbits $\langle e_{i} \rangle_{J_{1}}$ are all distinct. Suppose $\psi$ is an automorphism of $K_{n,n}$ which fixes $e_{1}$ pointwise, and fixes each $\langle e_{i} \rangle_{J_{1}}$ setwise. This means that $v_{1}$ and $w_{1}$ are both fixed pointwise. Since $e_{i}$ is the only edge in $\langle e_{i} \rangle_{J_{1}}$ that is adjacent to $v_{1}$, then $e_{i}$ is also fixed. This implies that $w_{i}$ is fixed for every $i$. Thus a subgraph $K_{1,2rs}$ is fixed pointwise and since $s \geq 3$, $K_{1,2rs}$ cannot be embedded in $S^{1}$. So by the Subgroup Lemma there is an embedding $\Gamma'$ such that $\TSG(\Gamma') = (\mathbb{Z}_{r}\times\mathbb{Z}_{s})\ltimes \Z_2$.  Moreover, since $e_1$ was not fixed by any non-trivial element of $J_1$, the Subgroup Corollary implies there is another embedding $\Gamma''$ such that $\TSG(\Gamma'') = \mathbb{Z}_{r}\times\mathbb{Z}_{s}$.
\end{proof}
\medskip

\begin{lemma} \label{L:n=2,r=2}
If $n \equiv 2 \pmod{2s}$, $2 \vert s$ and $H = \Z_2 \x \Z_s$ or $(\Z_2 \x \Z_s) \ltimes \Z_2$, then there exists an embedding, $\Gamma$, of $K_{n,n}$ in $S^{3}$ such that $\TSG(\Gamma) = H$.  
\end{lemma}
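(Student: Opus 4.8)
The plan is to mirror the strategy of Lemma~\ref{L:n=0}, but using the group $J_1 \cong (\Z_2 \x \Z_s) \ltimes \Z_2$ (with $r=2$), arranged so that $g$ and $h$ fix $V$ and $W$ setwise while $\varphi$ interchanges them. Writing $n = 2ks + 2$ (note $k \geq 1$ since $n>2$), I would first embed the vertices. Under $J_1$ a generic point has a free orbit of size $4s$, splitting as $2s$ points in $V$ (its $\langle g,h\rangle$-orbit) and $2s$ points in $W$ (their image under $\varphi$); so $k$ such orbits account for $2ks$ vertices in each of $V$ and $W$. For the remaining two vertices in each set, I would place a single orbit of size $4$ on the axis $Y$ of $h$: choosing $y \in Y \setminus Z$, the four points $y, g(y), \varphi(y), \varphi g(y)$ alternate $V,W,V,W$ around $Y$ and induce a copy of $K_{2,2}$.

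Next I would verify the five conditions of the Edge Embedding Lemma. The only adjacent pairs pointwise fixed by a nontrivial element are the four edges of the $K_{2,2}$ lying on $Y$; these are all fixed by the cyclic group $\langle h\rangle$, whose elements all have fixed set $Y$, so conditions (1)--(3) reduce to realizing those four edges as the four consecutive arcs of $Y$ between the alternating vertices, and checking that any element setwise fixing such a pair (e.g.\ $\varphi$ or $\varphi g$, each acting on $Y$ as a reflection through the arc's midpoint) preserves the corresponding arc. For conditions (4) and (5) I would observe that the vertex-interchanging elements are exactly the $2s$ order-$2$ elements in the coset $\varphi\langle g,h\rangle$; since each swaps $V$ and $W$ it cannot equal the central inversion $-I = gh^{s/2}$ (which preserves the vertex sets --- this is where $2 \mid s$ is used), so each is a genuine rotation by $\pi$ with a nonempty geodesic-circle fixed set, these circles are pairwise distinct and distinct from $Y$, and none of them contains a vertex. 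This yields an embedding $\Gamma$ with $(\Z_2 \x \Z_s) \ltimes \Z_2 \subseteq \TSG(\Gamma)$.

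To pin down the group exactly I would apply the Subgroup Lemma anchored at a generic vertex $P \in V$, using the $2s$ edges $e_{c,d}$ from $P$ to the $W$-vertices $g^c h^d \varphi(P)$ of its orbit. The crucial computation is that each $e_{c,d}$ is the \emph{unique} edge incident to $P$ in its orbit $\langle e_{c,d}\rangle_{J_1}$: the only group element carrying the other endpoint to $P$ is $\varphi h^{-d} g^c$, and this element merely flips $e_{c,d}$ rather than producing a new edge at $P$. Hence any automorphism fixing $e_{0,0}$ pointwise and every $\langle e_{c,d}\rangle_{J_1}$ setwise must fix $P$ together with all $2s$ of these $W$-vertices, i.e.\ pointwise fix a $K_{1,2s}$, which (as $2s \geq 6$) cannot embed in $S^1$. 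The Subgroup Lemma then gives $\Gamma'$ with $\TSG(\Gamma') = (\Z_2 \x \Z_s) \ltimes \Z_2$. Finally, since any edge incident to the generic vertex $P$ is pointwise fixed only by the identity, the Subgroup Corollary produces a further re-embedding $\Gamma''$ with $\TSG(\Gamma'') = \Z_2 \x \Z_s$, covering both choices of $H$.

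The main obstacle I anticipate is the Edge Embedding verification for the four coincident vertices on the single circle $Y$: one must check that the four edges of $K_{2,2}$ really can be taken as consecutive arcs compatible with the whole $J_1$-action (conditions (2) and (3)), and --- the genuinely delicate point --- that every vertex-interchanging element is a $\pi$-rotation rather than the central inversion, which is exactly what guarantees the nonempty, distinct fixed circles demanded by condition (5). The remaining steps are bookkeeping once the orbit-uniqueness identity for the Subgroup Lemma is established.
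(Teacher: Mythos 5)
Your proposal is correct and follows essentially the same route as the paper's proof: the group $J_1$ with $r=2$, $k$ free orbits of size $4s$ plus one $4$-point orbit on $Y$ alternating $V,W,V,W$, the Edge Embedding Lemma, a $K_{1,\ast}$ star at a generic vertex for the Subgroup Lemma, and the Subgroup Corollary to drop to $\Z_2\x\Z_s$. If anything, your treatment of conditions (4) and (5) is more careful than the paper's, which asserts that no adjacent pair is interchanged by a nontrivial element even though the involutions in the coset $\varphi\langle g,h\rangle$ do interchange adjacent pairs; your observation that these are $\pi$-rotations distinct from the central inversion is exactly what makes those conditions hold.
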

\begin{proof}
In this case $n = 2ks + 2$ for some integer $k$, so $2n = 4ks + 4$. Pick a small ball, $M$, such that for each non-trivial $h \in J_1$, $h(M) \cap M = \emptyset$ (i.e. $J_1$ acts freely on $M$).  Pick $k$ points $p_1, \dots, p_k$ inside $M$; the orbit of each $p_i$ under $J_i$ contains $4s$ points.  Embed vertices of $V$ at each $g^ah^b(p_i)$ and vertices of $W$ at each $g^ah^b\vf(p_i)$.  Then there are four remaining vertices $v_{1},v_{2} \in V$ and $w_{1},w_{2} \in W$. Embed $v_1, w_1, v_2, w_2$ in order around $Y$, equally spaced, at the points $\pi/4$ radians away from $Y \cap Z$.  Then $g(v_1) = v_2$, $g(w_1) = w_2$, $\vf(v_1) = w_2$ and $\vf(w_1) = v_2$.

Since pairs of adjacent vertices are only fixed by rotations around $Y$, and all rotations around $Y$ have the same fixed point set, condition (1) of the Edge Embedding Lemma is met. Since the four vertices on $Y$ alternate $v$'s and $w$'s, there exists an arc bounded by each adjacent pair whose interior is disjoint from $V \cup W$ and any other such arc. So condition (2) is met. Also a pair of vertices bounding some arc is setwise invariant only under a rotation around $Y$. Since rotations around $Y$ fix the arc setwise, then condition (3) is met. Lastly no adjacent pair of vertices is interchanged by non-trivial elements of $J_1$, so conditions (4) and (5) are met. Therefore we are able to embed the edges of $K_{n,n}$ to get an embedding $\Gamma$ so that $(\mathbb{Z}_{2}\times\mathbb{Z}_{s})\ltimes\mathbb{Z}_{2}\subseteq \TSG(\Gamma)$. 

Now we will apply the Subgroup Lemma to show that we can modify the embedding so that $\TSG = (\mathbb{Z}_{2}\times\mathbb{Z}_{s})\ltimes \Z_2$. Since $n \geq 3$, there is at least one orbit of $4s$ vertices embedded in $S^3$ so that none of the vertices is fixed by any element of $J_1$.  Let $v \in V$ be one of these vertices, so $g^ah^b(v)$ is in $V$ and $g^ah^b\vf(v)$ is in $W$.  Let $e_i = \overline{v\, h^i\vf(v)}$.  Observe that all the $e_i$'s have distinct orbits under $J_1$. Suppose $\psi$ is an automorphism of $K_{n,n}$ which fixes $e_{0}$ pointwise, and fixes $\langle e_{i} \rangle_{J_{1}}$ setwise. This means that $v$ and $\vf(v)$ are both fixed pointwise. Since $e_{i}$ is the only edge in $\langle e_{i} \rangle_{J_{1}}$ that is adjacent to $v$, then $e_{i}$ is also fixed. This implies that $h^i\vf(v)$ is fixed for every $i$. Thus a subgraph $K_{1,s}$ is fixed pointwise and since $s \geq 3$, $K_{1,s}$ cannot be embedded in $S^{1}$. So by the Subgroup Lemma there is an embedding $\Gamma'$ such that $\TSG(\Gamma') = (\mathbb{Z}_{2}\times\mathbb{Z}_{s})\ltimes \Z_2$.  Moreover, since $e_1$ is not fixed by any non-trivial element of $J_1$, the Subgroup Corollary implies there is also an embedding $\Gamma''$ such that $\TSG(\Gamma'') = \Z_2 \x \Z_s$.
\end{proof}
\medskip

\begin{lemma} \label{L:n=s+2,r=2}
If $n \equiv s+2 \pmod{2s}$, $4\vert s$ and $H = \Z_2 \x \Z_s$ or $(\Z_2 \x \Z_s) \ltimes \Z_2$, then there exists an embedding, $\Gamma$, of $K_{n,n}$ in $S^{3}$ such that $H \subseteq \TSG(\Gamma)$.  Moreover, except in the case when $n=6$, $s=4$ and $H = (\Z_2 \x \Z_4) \ltimes \Z_2$, we can choose the embedding so that $H = \TSG(\Gamma)$.
\end{lemma}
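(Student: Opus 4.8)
The plan is to follow the constructions of Lemmas \ref{L:n=0} and \ref{L:n=2,r=2}, using the group of motions $J_2 \cong (\Z_2 \x \Z_s)\ltimes \Z_2$ (available precisely because $4 \vert s$) together with the placement ideas of Lemma \ref{L:n=m/2+2}. Write $n = 2ks + s + 2$, so that $2n = 4ks + 2s + 4$; since a generic point has an orbit of size $4s = |J_2|$, I would first embed $4ks$ vertices in $k$ free orbits inside a ball $M$ on which $J_2$ acts freely, split evenly between $V$ and $W$. This leaves $s+2$ vertices of each of $V$ and $W$ to place on the fixed circles $X,Y,Z$, and the substance of the proof is to do this so that the Edge Embedding Lemma still applies.

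The crucial choice is the homomorphism $\chi \colon J_2 \to \Z_2$ recording which motions interchange $V$ and $W$. I would let the order-$s$ glide rotation $h$ interchange the vertex sets, while the order-$2$ rotation $g$ (with $\fix(g) = X$) and the rotation $\varphi$ (with $\fix(\varphi) = Z$) each preserve $V$ and $W$. I would then embed $2s$ of the remaining vertices as the orbit of a point $q \in Z - (X \cup Y)$ — a free $\langle g,h\rangle$-orbit of size $2s$ whose color is the parity of the power of $h$ — and the last four vertices as the orbit of a point of $Y \cap Z$, which is a $4$-cycle alternating between $V$ and $W$ as one goes around $Y$. The only nontrivial motions with fixed points that fix any of these extra vertices are the involutions conjugate to $\varphi$ (whose axes carry the orbit of $q$) and the motions fixing $Y$ pointwise (which carry the $4$-cycle). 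Here is where $4 \vert s$ enters: since $s/2$ is then even, each $\varphi$-conjugate involution fixes four points of the orbit of $q$, all with the same parity of the power of $h$, hence all in the same vertex set. Consequently every adjacent (that is, $V$-to-$W$) pair among the extra vertices is either a consecutive pair on the circle $Y$ or is pointwise fixed by no nontrivial motion. This is exactly the point that fails when $\frac{s}{2}$ is odd, where the analogous pairs straddle $V$ and $W$ and the embedding is obstructed, as in Lemma \ref{L:s+2mod2sr=2}.

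With these placements I would verify the five hypotheses of the Edge Embedding Lemma essentially as in the earlier lemmas: (1) holds because the only pointwise-fixed adjacent pairs lie on $Y$ and share the fixed set $Y$; (2) holds because the four vertices on $Y$ alternate between the vertex sets, so the four edges joining them can be drawn as disjoint consecutive arcs of $Y$, while no other adjacent pair requires an arc; and (3)--(5) follow as before, since the motions that interchange adjacent pairs are rotations whose axes are nonempty, pairwise distinct, and disjoint from the embedded vertices. This produces an embedding $\Gamma$ with $(\Z_2 \x \Z_s)\ltimes \Z_2 \subseteq \TSG(\Gamma)$. For the ``moreover'', when $k \ge 1$ I would apply the Subgroup Lemma to an edge $e_1$ in a generic orbit (with a few of its neighbors) to fix a subgraph that cannot be embedded in $S^1$, obtaining $\TSG(\Gamma') = (\Z_2 \x \Z_s)\ltimes \Z_2$; since $e_1$ is fixed by no nontrivial motion, the Subgroup Corollary then gives a further embedding with $\TSG = \Z_2 \x \Z_s$. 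The excluded case $n = 6$, $s = 4$ is exactly $k = 0$: there are no generic orbits, so this Subgroup Lemma argument for the full semidirect product is unavailable.

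The main obstacle is the bookkeeping of the second paragraph: choosing $\chi$ and the special orbits so that simultaneously (a) no vertex lies on the axis of a motion that interchanges $V$ and $W$, and (b) every adjacent pair that is pointwise fixed by a nontrivial motion has room on its fixed circle for disjoint connecting arcs. Both requirements hinge on $4 \vert s$, and the small case $s = 4$ (where $h^4 = 1$, so a different motion fixes the orbit on $Y$) has to be tracked separately.
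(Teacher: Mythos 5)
Your construction of an embedding with $(\Z_2\x\Z_s)\ltimes\Z_2 \subseteq \TSG(\Gamma)$ is essentially the paper's: the same group $J_2$, the same coloring $h(V)=W$, $g(V)=\vf(V)=V$, the same placement of $4ks$ vertices in free orbits, $2s$ vertices on the orbit of a point of $Z-(X\cup Y)$, and $4$ alternating vertices on $Y$, with $4\vert s$ used exactly as in the paper to ensure each image of $Z$ carries vertices of only one part. That half of the argument is sound.

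The gap is in the ``moreover'' clause. You run the Subgroup Lemma only when $k\ge 1$ and then declare the excluded case to be ``exactly $k=0$.'' But $k=0$ means $n=s+2$, which includes $n=10,\ s=8$ and $n=14,\ s=12$ and so on; the lemma asserts that in all of these cases $H=\TSG(\Gamma)$ is still achievable for both groups, and moreover that $\Z_2\x\Z_4=\TSG(\Gamma)$ is achievable for $K_{6,6}$ --- the only genuine exception is $n=6$, $s=4$, $H=(\Z_2\x\Z_4)\ltimes\Z_2$. To close this you need a Subgroup Lemma argument that does not rely on a free orbit. The paper takes a vertex $v$ on $Z-(X\cup Y)$ and the edge $e_1=\overline{v\,h(v)}$: an automorphism fixing $e_1$ pointwise and $\langle e_1\rangle_{J_2}$ setwise is forced, edge by edge around the orbit, to fix every $h^i(v)$, hence a subgraph $K_{s/2,s/2}$, which cannot be embedded in $S^1$ once $s>4$; and since $e_1$ is fixed pointwise by no nontrivial motion, the Subgroup Corollary then also yields $\Z_2\x\Z_s$. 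For $n=6$, $s=4$ that fixed subgraph is only a $K_{2,2}$, which is why the full semidirect product is out of reach there; the paper still obtains $\Z_2\x\Z_4$ by restricting to the subgroup $G=\langle g,h\rangle$ and adjoining a second edge orbit $\langle\overline{v\,w_1}\rangle_{G}$ with $w_1$ on $Y$, which under $G$ --- though not under $J_2$ --- contains a unique edge at $v$ and so pins down a fixed $K_{3,2}$. None of this appears in your proposal, so the second half of the statement is unproven as written.
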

\begin{proof}
In this case $n = 2ks + s + 2$ for some integer $k$, so $2n = 4ks + 2s +4$.  We will use the group of motions $J_2$.  Pick a small ball, $M$, such that for each non-trivial $h \in J_2$, $h(M) \cap M = \emptyset$ (i.e. $J_2$ acts freely on $M$).  Pick $k$ points $p_1, \dots, p_k$ inside $M$; the orbit of each $p_i$ under $J_2$ contains $4s$ points.  Embed vertices of $V$ at each $g^ah^b\vf^c(p_i)$ where $b$ is even and vertices of $W$ at each $g^ah^b\vf^c(p_i)$ where $b$ is odd.  So $g(V) = V$, $h(V) = W$ and $\vf(V) = V$.  We are left with $2s+4$ vertices to embed. First consider four points $v_{1}, v_{2} \in V$ and $w_{1}, w_{2} \in W$. Embed $v_{1}$ and $v_{2} = h^2(v_1)$ at the two points in $Y\cap Z$. Embed $w_{1}$ on $Y$ as $h(v_1)$. Embed $w_{2}$ as $g(w_1) = h^3(v_1)$. There are $2s$ remaining vertices of $V \cup W$ to embed. Choose a point $q$ on $Z-(X \cup Y)$. Since $g$ has order 2 and $h$ has order $s$, the image of $Z$ under $g$ and $h$ will be $\frac{s}{2}$ distinct circles, with four images of $q$ on each circle. Since $v_{1}$ and $v_{2}$ are on $Z$ embed a vertex of $V$ at $q$. Then the three images of $q$ on $Z$ are $g(q)$, $h^{\frac{s}{2}}(q)$ and $gh^{\frac{s}{2}}(q)$. 

Since $g$ fixes $V$ and, since $\frac{s}{2}$ is even, $h^{\frac{s}{2}}(V)=V$, all vertices on $Z$ are in $V$. Following in this manner, embed the $2s$ vertices of $V \cup W$ as the orbit of $q$ such that $g^ah^{2k}(q)\in V$ and $g^ah^{2k+1}(q)\in W$ for $k\in \mathbb{Z}$. Thus images of $Z$ either contain only vertices of $V$ or only vertices of $W$. Since only rotations around $Y$ fix pairs of adjacent vertices, and all rotations around $Y$ have the same fixed point set, condition (1) of the Edge Embedding Lemma is met. Since  $v_{1},v_{2}, w_{1}$ and $w_{2}$ are the only vertices embedded on $Y$, alternating $v$'s and $w$'s, there is an arc bounded by each pair whose interior is disjoint from $V \cup W$ and any other such arc. Thus condition (2) is met. Also a pair of vertices bounding such an arc is setwise invariant only under a rotation around $Y$. Since rotations around $Y$ fix the arc, then condition (3) is met. Lastly no adjacent pair of vertices is interchanged by non-trivial elements of $J_2$, so conditions (4) and (5) are met. Therefore we are able to embed the edges of $K_{n,n}$ to get an embedding $\Gamma$ so that $(\mathbb{Z}_{2}\times\mathbb{Z}_{s})\ltimes\mathbb{Z}_{2}\subseteq \TSG(\Gamma)$. 

Now we will apply the Subgroup Lemma to show that we can modify the embedding so that $\TSG = (\mathbb{Z}_{2}\times\mathbb{Z}_{s})\ltimes \Z_2$. If $k > 0$, at least $4s$ vertices are embedded in the complement of all the fixed point sets of elements of $J_2$; then we can find an embedding $\Gamma'$ with $\TSG(\Gamma') = (\mathbb{Z}_{2}\times\mathbb{Z}_{s})\ltimes \Z_2$ as we did in Lemma \ref{L:n=2,r=2}.  Also as in Lemma \ref{L:n=2,r=2}, there is an edge not fixed by any nontrivial element of the topological symmetry group, so the Subgroup Corollary implies there is an embedding $\Gamma''$ with $\TSG(\Gamma'') = \Z_2 \x \Z_s$.  

If $k = 0$, then let $v$ be a vertex embedded on $Z - (X \cup Y)$.  Let $e_1 = \overline{v h(v)}$, so $\langle e_1\rangle_{J_2} = \{\overline{h^i(v)h^{i+1}(v)}, \overline{gh^i(v) gh^{i+1}(v)}\}$. Suppose $\psi$ is an automorphism of $K_{n,n}$ which fixes $e_{1}$ pointwise, and fixes $\langle e_{1} \rangle_{J_{2}}$ setwise.  So $v$ and $h(v)$ are both fixed.  Notice that the only other edge in the orbit of $e_1$ which is adjacent to $h(v)$ is $\overline{h(v)h^2(v)}$.  Since $h(v)$ and $e_1$ are both fixed, this means $\overline{h(v)h^2(v)}$ is fixed, and hence $h^2(v)$ is fixed. Proceeding inductively, we can show that $h^i(v)$ is fixed for every $i$.  This a subgraph $K_{s/2,s/2}$ is fixed pointwise.  If $s > 4$, this subgraph cannot be embedded in $S^1$, so there is an embedding $\Gamma'$ with $\TSG(\Gamma') = (\mathbb{Z}_{2}\times\mathbb{Z}_{s})\ltimes \Z_2$.  Also, $e_1$ is not fixed by any nontrivial element of $J_2$, so the Subgroup Corollary implies there is an embedding $\Gamma''$ with $\TSG(\Gamma'') = \Z_2 \x \Z_s$.

We are left with the case when $s = 4$ and $n = 6$.  In this case we can only show there is an embedding with $\TSG = \Z_2 \x \Z_s$.  We first embed the vertices as described before, but now view them as acted on only by the subgroup $G = \Z_2 \x \Z_s$ of $J_2$ generated by $g$ and $h$.  So there are 4 vertices embedded on $Y$ and 8 embedded in $S^3 - (X \cup Y)$.  Let $v$ be a vertex of $V$ embedded in $S^3 - (X\cup Y)$, and let $e_1 = \overline{v h(v)}$.  Also let $e_2 = \overline{v w_1}$, where $w_1$ is a vertex of $W$ embedded on $Y$.  Suppose $\psi$ is an automorphism of $K_{n,n}$ which fixes $e_{1}$ pointwise, and fixes $\langle e_{1} \rangle_{G}$ and $\langle e_{2} \rangle_{G}$ setwise.  The orbit of $e_1$ under $G$ is the same as its orbit under $J_2$, so by the same argument as in the last paragraph $\psi$ must fix each vertex $h^i(v)$, and so fixes a subgraph $K_{2,2}$.  However, under the action of $G$, $e_2$ is the only edge in its orbit which is adjacent to $v$ (as opposed to the action of $J_2$, where $\overline{v w_2}$ is also in the orbit).  So $e_2$, and therefore $w_1$, is also fixed by $\psi$.  Hence $\psi$ fixes a subgraph $K_{3,2}$ which cannot be embedded in $S^1$.  So there is an embedding $\Gamma'$ of $K_{6,6}$ with $\TSG(\Gamma') = \mathbb{Z}_{2}\times\mathbb{Z}_{4}$.
\end{proof}
\medskip

\begin{lemma} \label{L:n=2,r=4}
If $n\equiv 2 \pmod {2s}$, $4\vert s$ and $H = \Z_4 \x \Z_s$ or $(\Z_4 \x \Z_s) \ltimes \Z_2$, then there exists an embedding, $\Gamma$, of $K_{n,n}$ in $S^{3}$ such that $H \subseteq \TSG(\Gamma)$. Moreover, except in the case when $n=10$, $s=4$ and $H = (\Z_4 \x \Z_4) \ltimes \Z_2$, we can choose the embedding so that $H = \TSG(\Gamma)$.
\end{lemma}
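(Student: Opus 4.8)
The plan is to mirror the constructions in Lemmas \ref{L:n=2,r=2} and \ref{L:n=m/2+2}, using the group $J_1$ with $r=4$. Write $n = 2ks+2$ with $k \ge 1$, so $2n = 4ks+4$, and recall $J_1 = \langle g,h,\varphi\rangle \cong (\Z_4 \x \Z_s)\ltimes \Z_2$, where $g$ is a rotation of order $4$ about $A$, $h$ a rotation of order $s$ about $B$, and $\varphi$ a rotation of order $2$ about $C$. I would realize the order-$4$ generator through case (9) of the Automorphism Theorem by assigning the vertex sets so that $g(V)=W$, $h(V)=V$ and $\varphi(V)=V$; then $g$ acts on $Y$ as a rotation of order $4$ interchanging $V$ and $W$ (producing a single $4$-cycle), while $h$ fixes $Y$ pointwise and $\varphi$ fixes $Z$ pointwise. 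This assignment is exactly what the necessity analysis in Lemma \ref{L:2mods2} predicts: $g$ contributes the $4$-cycle and $h$ fixes two vertices in each of $V$ and $W$.

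For the vertex embedding I would place the bulk of the vertices in free orbits of size $8s$, each contributing $4s$ vertices to $V$ and $4s$ to $W$, since the index-two subgroup $\langle g^2,h,\varphi\rangle$ preserves each vertex set. When $k$ is odd I would also use one orbit seeded at a generic point $z \in Z-(X\cup Y)$; because $\varphi$ fixes $z$, this orbit has size $4s$ and splits as $2s$ vertices in each set. Finally I would embed the last four vertices on $Y$ at the orbit of a point of $Y \cap Z$ exactly as in Lemma \ref{L:n=m/2+2}: vertices of $V$ at the two points of $Y \cap Z$ and vertices of $W$ at their images under $g$. A short count shows this yields exactly $n$ vertices in each set for every $k \ge 1$.

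The main work is checking the five hypotheses of the Edge Embedding Lemma, and this is the step I expect to be most delicate. The only adjacent pairs pointwise fixed by a nontrivial element lie on $Y$ and are fixed by the powers of $h$, all of which have fixed-point set $Y$; since the four vertices on $Y$ alternate between $V$ and $W$, conditions (1)--(3) follow just as in Lemma \ref{L:n=2,r=2}. The elements that interchange adjacent pairs are the reflections $g^a h^b \varphi$ with $a$ odd, each an involution since $\varphi$ conjugates $g$ and $h$ to their inverses, and I would need to verify that their fixed geodesic circles are disjoint from the embedded vertices, giving condition (4), and mutually distinct, giving condition (5). With these checked, the Edge Embedding Lemma produces an embedding $\Gamma$ with $(\Z_4\x\Z_s)\ltimes\Z_2 \subseteq \TSG(\Gamma)$, so both target groups $H$ are realized as subgroups.

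To upgrade to $H = \TSG(\Gamma)$ I would use the Subgroup Lemma and Subgroup Corollary. If $k \ge 2$ there is a free orbit, and fixing one edge of it pointwise together with its siblings forces a star $K_{1,8s}$ to be fixed, yielding $\TSG = (\Z_4\x\Z_s)\ltimes\Z_2$; the Subgroup Corollary then produces an embedding with $\TSG = \Z_4 \x \Z_s$. If $k=1$ (so $n = 2s+2$) there is no free orbit, and instead I would exploit the set-swapping element $gh$, which commutes as $(gh)^i = g^ih^i$ and has order $s$: fixing the edge $\overline{z\,gh(z)}$ and its orbit forces the entire $\langle gh\rangle$-chain of length $s$ to be fixed, producing a fixed $K_{s/2,s/2}$. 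This subgraph cannot be embedded in $S^1$ exactly when $s>4$, so for $s>4$ both groups are again realized. For $s=4$ the chain only yields $K_{2,2}\subset S^1$; as in the $n=6$ case of Lemma \ref{L:n=s+2,r=2}, adjoining an edge to a vertex on $Y$ enlarges the fixed subgraph to $K_{3,2}$ and recovers the product $\Z_4\x\Z_4$, but this device does not pin down the extra $\Z_2$ factor, which is exactly why the case $n=10$, $s=4$, $H=(\Z_4\x\Z_4)\ltimes\Z_2$ is left open.
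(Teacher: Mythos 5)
Your proposal follows essentially the same route as the paper's proof: the group $J_1$ with $r=4$, the same vertex placement (free $8s$-orbits, one $4s$-orbit on $Z$ when the count requires it, and a $g$-orbit of four vertices on $Y$), the same Edge Embedding Lemma checks, the star and $gh$-chain arguments for the Subgroup Lemma, and the same restriction to $\langle g,h\rangle$ for $K_{10,10}$ with the semidirect product left open. The one place you are more careful than the paper is in conditions (4)--(5) of the Edge Embedding Lemma, where you correctly observe that the involutions $g^{a}h^{b}\varphi$ with $a$ odd do interchange adjacent pairs on $Y$ (the paper asserts no pairs are interchanged); the verification you defer does go through, since the axes of these involutions avoid all embedded vertices and their fixed-point circles are nonempty and pairwise distinct.
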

\begin{proof}
We need to consider when $n \equiv 2 \pmod{4s}$ and when $n \equiv 2s+2 \pmod{4s}$.  In this case $n = 4ks + 2$ or $4ks+2s+2$ for some integer $k$, so $2n = 8ks + 4$ or $8ks+4s+4$.  We will use the group of motions $J_1$, with $r = 4$.  Pick a small ball, $M$, such that for each non-trivial $h \in J_1$, $h(M) \cap M = \emptyset$ (i.e. $J_1$ acts freely on $M$).  Pick $k$ points $p_1, \dots, p_k$ inside $M$; the orbit of each $p_i$ under $J_1$ contains $8s$ points.  Embed vertices of $V$ at each $g^ah^b\vf^c(p_i)$ where $a$ is even and vertices of $W$ at each $g^ah^b\vf^c(p_i)$ where $a$ is odd.  So $g(V) = W$, $h(V) = V$ and $\vf(V) = V$.  We are left with either $4$ or $4s+4$ vertices to embed.  Consider the four vertices $v_1, v_2, w_1, w_2$.  Embed $v_1$ at one point of $Y \cap Z$ and let $g(v_1) = w_1$, $g^2(v_1) = v_2$ and $g^3(v_1) = w_2$.  If there are an additional $4s$ vertices, we embed them as follows.  Let $z$ be a point of $Z - (X \cup Y)$; then the orbit of $z$ under $J_1$ has $4s$ elements.  Embed vertices of $V$ at $g^ah^b(z)$ where $a$ is even, and vertices of $W$ at $g^ah^b(z)$ where $a$ is odd.  Then each image of $Z$ contains six vertices, all from $V$ or all from $W$; circles $g^ah^b(Z)$ where $a$ is even contain $v_1$ and $v_2$, and circles $g^ah^b(Z)$ where $a$ is odd contain $w_1$ and $w_2$.  

The only pairs of adjacent vertices fixed by an element of $J_1$ are the points embedded on $Y$, which are only fixed by rotations about $Y$, so condition (1) of the Edge Embedding Lemma is met.  Since  $v_{1},v_{2}, w_{1}$ and $w_{2}$ are the only vertices embedded on $Y$, alternating $v$'s and $w$'s, there is an arc bounded by each pair whose interior is disjoint from $V \cup W$ and any other such arc. Thus condition (2) is met. Also a pair of vertices bounding such an arc is setwise invariant only under a rotation around $Y$. Since all rotations around $Y$ fix the arc pointwise, then condition (3) is met. Lastly no adjacent pair of vertices is interchanged by non-trivial elements of $J_1$, so conditions (4) and (5) are met. Therefore we are able to embed the edges of $K_{n,n}$ to get an embedding $\Gamma$ so that $(\mathbb{Z}_{4}\times\mathbb{Z}_{s})\ltimes\mathbb{Z}_{2}\subseteq \TSG(\Gamma)$.

Now we will apply the Subgroup Lemma to show that we can modify the embedding so that $\TSG = (\mathbb{Z}_{4}\times\mathbb{Z}_{s})\ltimes \Z_2$. If $k > 0$, at least $8s$ vertices are embedded in the complement of all the fixed point sets of elements of $J_1$; then we can find embeddings $\Gamma'$ and $\Gamma''$ with $\TSG(\Gamma') = (\mathbb{Z}_{4}\times\mathbb{Z}_{s})\ltimes \Z_2$ and $\TSG(\Gamma'') = \Z_4 \x \Z_s$ as we did in Lemma \ref{L:n=2,r=2}.  If $k = 0$, then let $v$ be a vertex of $V$ embedded on $Z - (X \cup Y)$.  Let $e_1 = \overline{v gh(v)}$, so $\langle e_{1} \rangle_{J_{1}} = \{\overline{g^ih^j(v) g^{i+1}h^{j+1}(v)}\}$ (notice that $\vf(\overline{v gh(v)}) = \overline{v g^{-1}h^{-1}(v)}$, which is still in the set). Suppose $\psi$ is an automorphism of $K_{n,n}$ which fixes $e_{1}$ pointwise, and fixes $\langle e_{1} \rangle_{J_{1}}$ setwise.  So $v$ and $gh(v)$ are both fixed.  However, the only edges in the orbit of $e_1$ adjacent to $gh(v)$ are $\overline{v gh(v)}$ and $\overline{gh(v) g^2h^2(v)}$.  Since $v$ and $gh(v)$ are both fixed, this means $g^2h^2(v)$ is also fixed.  Continuing inductively, every $g^ih^i(v)$ is fixed.  The points $\{g^ih^i(v)\}$, where $0 \leq i < s$, alternate between vertices of $V$ and $W$; so these points induce a subgraph of $\Gamma$ isomorphic to $K_{s/2,s/2}$ that is fixed pointwise.  If $s > 4$, this subgraph cannot be embedded in $S^1$, so there is an embedding $\Gamma'$ with $\TSG(\Gamma') = (\mathbb{Z}_{4}\times\mathbb{Z}_{s})\ltimes \Z_2$.  Also, $e_1$ is not fixed pointwise by any nontrivial element of $J_2$, so the Subgroup Corollary implies there is an embedding $\Gamma''$ with $\TSG(\Gamma'') = \Z_4 \x \Z_s$.

We are left with the case when $s = 4$ and $n = 10$.  In this case we can only show there is an embedding with $\TSG = \Z_4 \x \Z_s$.  We first embed the vertices as described before, but now view them as acted on only by the subgroup $G = \Z_4 \x \Z_s$ of $J_1$ generated by $g$ and $h$.  So there are 4 vertices embedded on $Y$ and 16 embedded in $S^3 - (X \cup Y)$.  Let $v$ be a vertex ov $V$ embedded in $S^3 - (X\cup Y)$, and let $e_1 = \overline{v gh(v)}$.  Also let $e_2 = \overline{v w_1}$, where $w_1$ is a vertex of $W$ embedded on $Y$.  Suppose $\psi$ is an automorphism of $K_{n,n}$ which fixes $e_{1}$ pointwise, and fixes $\langle e_{1} \rangle_{G}$ and $\langle e_{2} \rangle_{G}$ setwise.  The orbit of $e_1$ under $G$ is the same as its orbit under $J_1$, so by the same argument as in the last paragraph $\psi$ must fix each vertex $g^ih^i(v)$, and so fixes a subgraph $K_{2,2}$.  However, under the action of $G$, $e_2$ is the only edge in its orbit which is adjacent to $v$ (as opposed to the action of $J_1$, where $\overline{v w_2}$ is also in the orbit).  So $e_2$, and therefore $w_1$, is also fixed by $\psi$.  Hence $\psi$ fixes a subgraph $K_{3,2}$ which cannot be embedded in $S^1$.  So there is an embedding $\Gamma'$ of $K_{10,10}$ with $\TSG(\Gamma') = \mathbb{Z}_{4}\times\mathbb{Z}_{4}$.
\end{proof}
\medskip

Combining the results of Sections \ref{S:necessity} and \ref{S:construct} gives us the proof of Theorem \ref{T:product}.

\begin{them2}
Let $n>2$. There exists an embedding, $\Gamma$, of $K_{n,n}$ in $S^{3}$ such that $H \subseteq\TSG(\Gamma)$ for $H = \mathbb{Z}_{r}\times\mathbb{Z}_{s}$ or $(\mathbb{Z}_{r}\times\mathbb{Z}_{s})\ltimes\mathbb{Z}_{2}$, where $r \vert s$, if and only if one of the following conditions hold:
	\begin{enumerate}	
	\item $n\equiv 0 \pmod {s}$,
	\item $n\equiv 2 \pmod {2s}$ when $r=2$,
	\item $n\equiv s+2 \pmod {2s}$ when $4\vert s$, and $r=2$,
	\item $n\equiv 2 \pmod {2s}$ when $r=4$.
	\end{enumerate}
Moreover, in each of the above cases, we can construct embeddings $\Gamma$ where $\TSG(\Gamma) = H$ except in the following cases, which are still open: \begin{itemize}
	\item $K_{ls, ls}$, when $1 \leq l < 2r$, $H = \mathbb{Z}_{r}\times\mathbb{Z}_{s}$ or $(\mathbb{Z}_{r}\times\mathbb{Z}_{s})\ltimes\mathbb{Z}_{2}$
	\item $K_{6,6}$, when $H = (\Z_2 \x \Z_4) \ltimes \Z_2$
	\item $K_{10,10}$, when $H = (\Z_4 \x \Z_4) \ltimes \Z_2$
\end{itemize}
\end{them2}

\begin{proof}
First we will show that the conditions are necessary. From Lemma \ref{L:cyclic1}, we know that we have the following constrictions on $n$: $n \equiv 0,1,2 \pmod {s}$, $n \equiv 0 \pmod {\frac{s}{2}}$ and $s$ even or $n \equiv 2\pmod{ \frac{s}{2}}$ and $4|s$. By Lemma \ref{L:1mods} we can eliminate the case when $n \equiv 1 \pmod{s}$.  By Lemma \ref{L:2mods2}, we can only have $n \equiv 2\pmod{s}$ if $r = 2$ or $4$.  If $r = 2$ and $n \equiv s+2 \pmod{2s}$, then by Lemma \ref{L:s+2mod2sr=2} we must have $4 \vert s$.  If $r = 4$, then by Lemma \ref{L:s+2mod2sr=4} we cannot have $n \equiv s+2 \pmod{2s}$.  Finally the cases when $n \equiv 0 \pmod {\frac{s}{2}}$ or $n \equiv 2\pmod{ \frac{s}{2}}$ (that are not covered by other cases) are ruled out by Lemmas \ref{L:s/2mods} and \ref{L:s/2+2mods}, respectively. Thus we have shown the necessity of the conditions.

Lemmas \ref{L:n=0}, \ref{L:n=2,r=2}, \ref{L:n=s+2,r=2} and \ref{L:n=2,r=4} show that we can find embeddings to realize each of the remaining conditions, and that we can achieve equality in all cases except the three exceptions noted.
\end{proof}


\small

\normalsize

\end{document}